\setlist[enumerate]{leftmargin=1.5em}
\setlist[itemize]{leftmargin=1.5em}
\providecommand{\bysame}{\leavevmode\hbox to3em{\hrulefill}\thinspace}
\providecommand{\MR}{\relax\ifhmode\unskip\space\fi MR }
\providecommand{\href}[2]{#2}
\definecolor{green}{rgb}{0,0.8,0} 
\newtheorem{theorem}{Theorem}[section]
\newtheorem{lemma}[theorem]{Lemma}
\newtheorem{proposition}[theorem]{Proposition}
\theoremstyle{definition}
\theoremstyle{remark}
\newtheorem{remark}[theorem]{Remark}
\numberwithin{equation}{section}
\newcommand{\nrm}[1]{\Vert#1\Vert}
\newcommand{\nnrm}[1]{{\vert\kern-0.25ex\vert\kern-0.25ex\vert #1 
    \vert\kern-0.25ex\vert\kern-0.25ex\vert}}
\newcommand{\supp}{{\mathrm{supp}}\,}
\newcommand{\ud}{\mathrm{d}}
\newcommand{\rd}{\partial}
\newcommand{\nb}{\nabla}
\newcommand{\alp}{\alpha}
\newcommand{\dlt}{\delta}
\newcommand{\omg}{\omega}
\newcommand{\bfu}{{\bf u}}
\newcommand{\bbN}{\mathbb N}
\newcommand{\bbR}{\mathbb R}
\begin{document}

\title{A simple ill-posedness proof for incompressible Euler equations \\ in critical Sobolev spaces}
\author{Junha Kim\thanks{School of Mathematics, Korea Institute for Advanced Study, 85 Hoegi-ro, Dongdaemun-gu, Seoul 02455. E-mail: junha02@kias.re.kr} \and In-Jee Jeong\thanks{Department of Mathematics and RIM, Seoul National University, 1 Gwanak-ro, Gwanak-gu, Seoul 08826, Korea. E-mail: injee\_j@snu.ac.kr, Corresponding Author.}} 
\date{\today}



\maketitle
\renewcommand{\thefootnote}{\fnsymbol{footnote}}
\footnotetext{\emph{Key words:} incompressible Euler; ill-posedness; vortex stretching; critical regularity; non-existence; norm inflation.\\ \quad\emph{2020 AMS Mathematics Subject Classification:} 76B47, 35Q35 }

\begin{abstract}
	We provide a simple proof that the Cauchy problem for the incompressible Euler equations in $\mathbb{R}^{d}$ with any $d\ge3$ is ill-posed in critical Sobolev spaces, extending an earlier work of Bourgain--Li (Int. Math. Res. Not. 2021) in the case $d = 3$. The ill-posedness is shown for certain critical Lorentz spaces as well. 
\end{abstract}


\section{Introduction}

\subsection{Main results}

In this paper, we are concerned with the Cauchy problem for the incompressible Euler equations in $\bbR^d$ with $d \geq 3$:
\begin{equation}  \label{eq:euler}
\left\{
\begin{aligned} 
\rd_t \bfu + \bfu \cdot \nb \bfu + \nb p &= 0, \\
\nb \cdot \bfu &= 0, \\
\bfu(t = 0) &= \bfu_0.
\end{aligned}
\right.
\end{equation} We shall provide a simple proof that \eqref{eq:euler} is \textit{ill-posed} in the critical Sobolev space $H^{d/2+1}(\bbR^d)$, in the strongest sense of Hadamard. The space $H^{d/2+1}(\bbR^d)$ is critical since one can obtain local well-posedness of \eqref{eq:euler} in $H^s(\bbR^d)$ with any $s>d/2+1$ (\cite{Ka,KL}), while $\bfu \in H^{d/2+1}(\bbR^d)$ does not guarantee that $\nrm{\nb\bfu}_{L^{\infty}} < \infty$, which is an essential ingredient in local regularity proof. Our first main result gives the existence of a $H^{d/2+1}$--regular data without any solutions in the same space. 
\begin{theorem}[Nonexistence]\label{thm:nonexist}
	For any $\varepsilon> 0$, there exists $\bfu_{0} \in H^{d/2+1}(\bbR^d)$ satisfying \begin{equation*}
		\begin{split}
			\nrm{\bfu_0}_{H^{d/2+1}(\bbR^d)} < \varepsilon 
		\end{split}
	\end{equation*} such that there is \emph{no} corresponding solution to \eqref{eq:euler} belonging to $L^\infty([0,\dlt];H^{d/2+1}(\bbR^{d}))$ for \emph{any} $\dlt>0$. 
\end{theorem}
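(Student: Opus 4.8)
The plan is to realize $\bfu_{0}$ as an infinite superposition of rescaled \emph{building blocks}, each of which, evolved in isolation, inflates the critical norm arbitrarily fast, and then to show that a solution $\bfu\in L^{\infty}([0,\dlt];H^{d/2+1})$ is incompatible with the inflation forced by the blocks. The starting point for the building block is the (borderline) failure of the embedding $H^{d/2+1}(\bbR^{d})\hookrightarrow W^{1,\infty}(\bbR^{d})$: for every large $N$ there is a smooth, compactly supported, divergence-free field $\bfv^{(N)}$ with $\nrm{\bfv^{(N)}}_{H^{d/2+1}}\le 1$ but with Lipschitz seminorm $\nrm{\nb\bfv^{(N)}}_{L^{\infty}}\gtrsim\log N$; moreover, by choosing the profile so that, near a point, this large gradient is essentially a fixed trace-free strain with a positive eigenvalue on a ball — a structure available in every dimension $d\ge 3$ — one can arrange that the large strain is not destroyed instantaneously but persists over a suitable time interval. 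Writing $\Phi_{t}$ for the Lagrangian flow of the corresponding solution $\bfu^{(N)}$, the Cauchy vorticity formula (which transports the vorticity along $\Phi_{t}$ with an amplification governed by $\nb\Phi_{t}$), together with the Jacobian equation $\frac{\ud}{\ud t}\nb\Phi_{t}=(\nb\bfu^{(N)})(t,\Phi_{t})\,\nb\Phi_{t}$, then forces $\abs{\nb\Phi_{t}}\gtrsim N^{ct}$ on that ball, which amplifies a high-frequency component of the vorticity and yields $\nrm{\bfu^{(N)}(t)}_{\dot{H}^{d/2+1}}\gtrsim N^{ct}$ for every fixed $t>0$ — an inflation factor that beats any prescribed constant once $N$ is large. (Making this stretching mechanism, and the $\log N$ gain, work uniformly for every $d\ge 3$ is where the present argument goes beyond Bourgain--Li's $d=3$ result.)

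Next, fix a rapidly increasing sequence $N_{k}\to\infty$ and amplitudes $\alpha_{k}\downarrow 0$ with $\sum_{k}\alpha_{k}^{2}$ small, translate the $k$-th block into one of a family of pairwise disjoint balls $B_{k}$ centred at points $\bfx_{k}$, and set $\bfu_{0}=\sum_{k}\alpha_{k}\,\bfv^{(N_{k})}(\,\cdot\,-\bfx_{k})$. Disjointness of the supports, together with a sufficient separation of the $B_{k}$ to control the nonlocal tails of $(-\lap)^{(d/2+1)/2}$, makes $\bfu_{0}$ divergence-free and gives $\nrm{\bfu_{0}}_{H^{d/2+1}}<\veps$, the series converging in $H^{d/2+1}(\bbR^{d})$. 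Now suppose, toward a contradiction, that $\bfu\in L^{\infty}([0,\dlt];H^{d/2+1})$ solves \eqref{eq:euler} with this datum, and put $M:=\sup_{t\le\dlt}\nrm{\bfu(t)}_{H^{d/2+1}}<\infty$. Since $H^{d/2+1}(\bbR^{d})$ embeds into the log-Lipschitz class, $\bfu$ is log-Lipschitz in space uniformly in $t\le\dlt$ (with constant $\lesssim M$), so its Lagrangian flow is well defined and the transport structure above remains valid for $\bfu$ itself. The heart of the matter is then that this a priori bound localizes the dynamics: near each $B_{k}$, on the relevant short time scale, the velocity produced by the other blocks and by the nonlocal pressure is merely an $O(M)$ perturbation of the local dynamics, so once $N_{k}$ is large enough relative to $M$ the local flow still stretches by a factor $\gtrsim N_{k}^{c\alpha_{k}t}$, whence $\nrm{\bfu(t)}_{\dot{H}^{d/2+1}}\gtrsim N_{k}^{c\alpha_{k}t}$ for every $t\in(0,\dlt]$. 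Choosing $N_{k},\alpha_{k}$ with $\alpha_{k}\log N_{k}\to\infty$ while $\sum_{k}\alpha_{k}^{2}$ stays small (e.g.\ $\alpha_{k}\sim k^{-1}$, $N_{k}\sim e^{k^{2}}$) contradicts $M<\infty$; since $\dlt>0$ was arbitrary, Theorem~\ref{thm:nonexist} follows.

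The main obstacle is the localization step of the previous paragraph. Because the Euler pressure is nonlocal there is no finite speed of propagation, so one must \emph{quantify} that the velocity felt at the $k$-th block, coming from the far-away blocks and from the self-interaction through $p$, is genuinely of lower order than the $O(\log N_{k})$ strain driving that block on its (short) time scale — and this is exactly what dictates how fast $N_{k}$ must grow and how small $\alpha_{k}$, the radii of $B_{k}$, and the separations $\abs{\bfx_{k}-\bfx_{j}}$ must be, with constants that do not degenerate as $d\to\infty$. A secondary difficulty, present already for a single block, is to make the ``frozen-strain'' heuristic for $\nb\Phi_{t}$ rigorous for the genuine nonlinear Euler evolution over the relevant interval, in a topology strong enough to read off $\dot{H}^{d/2+1}$ inflation, and to verify that the large initial strain really does persist long enough.
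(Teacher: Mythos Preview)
Your outline is the Bourgain--Li ``separated patches'' strategy: build $\bfu_0$ from far-apart blocks, each of which inflates in isolation, and argue that under the contradiction hypothesis $\sup_{t\le\dlt}\nrm{\bfu(t)}_{H^{d/2+1}}\le M$ the blocks decouple so each still inflates. That is a legitimate route (it is precisely what \cite{BL3D} executes for $d=3$), but it is \emph{not} what the paper does, and the two obstacles you yourself flag --- quantitative decoupling across the nonlocal pressure, and persistence of the large strain for the genuine nonlinear evolution --- are left open in your sketch. Those steps are the bulk of \cite{BL3D}; they are not minor, and you have not indicated how you would carry them out, especially uniformly in $d$.

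The paper's approach is designed to avoid both difficulties. There is no spatial separation and no decoupling: the data are axisymmetric without swirl, odd in $x_d$, and the scalar vorticity is $\omg_0=\sum_{n\ge n_0}n^{-\alp}\omg^{(n)}_{0,\mathrm{loc}}$ with dyadic bubbles \emph{accumulating at the origin}. The mechanism is cooperative rather than independent --- the large hyperbolic strain felt by the $n$-th bubble is produced by the first $n-1$ bubbles, not by itself. The Key Lemma (Lemma~\ref{key_lem}) expresses $u^r/r$ and $u^d/x_d$ at any point as an explicit signed integral $\int_{Q(x)}|y_h|y_d|y|^{-d-2}\omg\,\ud y$ plus a remainder bounded by the \emph{critical} norm $\nrm{\nb\omg}_{L^d}$ (or $\nrm{\omg}_{L^\infty}$) alone, so under the contradiction hypothesis the strain at bubble $n$ is $\sum_{k<n}I_k(t)+O(A)$ with $I_k\simeq k^{-\alp}$; persistence of this strain over a time $T_n\simeq n^{-1+\alp}$ is then an elementary bootstrap (Lemmas~\ref{lem_order}--\ref{lem_inv}), and one reads off stretching $\Phi^r/r\gtrsim n^{c_2}$ and squeezing $x_d/\Phi^d\gtrsim n^{c_2}$ directly. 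A Hardy-inequality lower bound then gives $\nrm{\Lambda^{d/2}\omg(T)}_{L^2}^2\gtrsim\sum_n n^{c_4-2\alp}=+\infty$ for $\alp$ just above $1/2$.

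In short: your plan buys generality (no symmetry assumption) at the price of the hard localization/persistence analysis you still owe; the paper trades that generality for an explicit axisymmetric configuration in which the Biot--Savart interaction has a sign, the ``frozen strain'' is a lemma rather than a heuristic, and the nonlocal coupling is a feature rather than an obstacle.
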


Furthermore, we are able to prove that a large growth of the $H^{d/2+1}$--norm occurs for certain infinitely smooth initial data.  
\begin{theorem}[Strong norm inflation]\label{thm:inf}
	For any $\delta$, $\varepsilon$, $A > 0$, there exists $\bfu_0 \in C^\infty(\bbR^d)$ with $\nrm{\bfu_0}_{H^{d/2+1}(\bbR^d)} < \varepsilon$ such that the unique local-in-time smooth solution $\bfu$ to \eqref{eq:euler} exists on $[0,\delta^*]$ for some $0 < \delta^* < \delta$ and satisfies
	\begin{equation*}
		\begin{split}
			\sup_{t \in [0,\delta^*]} \nrm{\bfu(t,\cdot)}_{H^{d/2+1}(\bbR^d)} > A .
		\end{split}
	\end{equation*}
\end{theorem}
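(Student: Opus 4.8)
The plan is to deduce Theorem~\ref{thm:inf} from Theorem~\ref{thm:nonexist} by an abstract compactness argument, so that no separate construction is needed. One argues by contradiction: suppose the assertion fails, so there are $\dlt_0,\varepsilon_0,A_0>0$ such that for \emph{every} $\bfu_0\in C^\infty(\bbR^d)$ with $\nrm{\bfu_0}_{H^{d/2+1}}<\varepsilon_0$, the unique local smooth solution $\bfu$ satisfies $\sup_{t\in[0,\dlt^*]}\nrm{\bfu(t)}_{H^{d/2+1}}\le A_0$ for every $\dlt^*\in(0,\dlt_0)$ on which it is defined. One then fixes the datum $\bfu_0^\ast\in H^{d/2+1}(\bbR^d)$ furnished by Theorem~\ref{thm:nonexist} with $\nrm{\bfu_0^\ast}_{H^{d/2+1}}<\varepsilon_0/2$, and sets $\bfu_0^{(n)}=\rho_n\ast\bfu_0^\ast$ for standard mollifiers $\rho_n$; this is divergence-free (convolution commutes with $\nb\cdot$), lies in $C^\infty\cap H^s$ for all $s$, converges to $\bfu_0^\ast$ in $H^{d/2+1}$, and has $\nrm{\bfu_0^{(n)}}_{H^{d/2+1}}<\varepsilon_0$ for $n$ large. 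Let $\bfu^{(n)}$ be the corresponding maximal smooth solution on $[0,T_n)$, provided by local well-posedness in $H^s$, $s>d/2+1$ (\cite{Ka,KL}).

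The crucial—and most delicate—step is to promote the hypothetical $H^{d/2+1}$ bound into a \emph{common lifespan}: I would show $T_n>\dlt_0$ for all large $n$. Fix any $s_0>d/2+1$. On $[0,T)$ with $T\le\min(T_n,\dlt_0)$ the standing assumption gives $\nrm{\bfu^{(n)}(t)}_{H^{d/2+1}}\le A_0$, hence $\nrm{\nb\bfu^{(n)}(t)}_{H^{d/2}}\le A_0$; a Brezis--Gallouet--Wainger logarithmic interpolation inequality then yields $\nrm{\nb\bfu^{(n)}(t)}_{L^\infty}\le C(A_0)\,(1+\log(e+\nrm{\bfu^{(n)}(t)}_{H^{s_0}}))$. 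Combined with the classical energy estimate $\tfrac{\ud}{\ud t}\nrm{\bfu^{(n)}}_{H^{s_0}}\le C\nrm{\nb\bfu^{(n)}}_{L^\infty}\nrm{\bfu^{(n)}}_{H^{s_0}}$ (Kato--Ponce commutators) and Gr\"onwall's inequality, $\nrm{\bfu^{(n)}(t)}_{H^{s_0}}$ is bounded on $[0,\dlt_0]$ by an (at most double-exponential) function of $t$ and $\nrm{\bfu_0^{(n)}}_{H^{s_0}}$; in particular it does not blow up as $t\uparrow T_n$ when $T_n\le\dlt_0$, contradicting the Beale--Kato--Majda-type continuation criterion. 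Hence $T_n>\dlt_0$. This $H^{s_0}$ bound is \emph{not} uniform in $n$ (the mollifications blow up in $H^{s_0}$), but that is irrelevant: one retains the uniform control $\nrm{\bfu^{(n)}}_{L^\infty([0,\dlt_0];H^{d/2+1})}\le A_0$ on a \emph{fixed} interval, which is all that is used below.

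It then remains to pass to the limit. Writing $\rd_t\bfu^{(n)}=-\bfP\,\nb\cdot(\bfu^{(n)}\otimes\bfu^{(n)})$ and using that $H^{d/2+1}$ is a Banach algebra, $\rd_t\bfu^{(n)}$ is bounded in $L^\infty([0,\dlt_0];H^{d/2})$ by $CA_0^2$. By the Aubin--Lions--Simon lemma, together with the compact Rellich embedding $H^{d/2+1}(B_R)\hookrightarrow\hookrightarrow H^{\sgm}(B_R)$ for $\sgm\in(d/2,\,d/2+1)$, a subsequence of $\bfu^{(n)}$ converges in $C([0,\dlt_0];H^{\sgm}_{loc})$, and weak-$\ast$ in $L^\infty([0,\dlt_0];H^{d/2+1})$, to some $\bfu$. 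Since $\sgm>d/2$, this convergence suffices to pass to the limit in the quadratic term, so $\bfu$ solves \eqref{eq:euler} in the distributional sense with $\bfu(0)=\lim_n\bfu_0^{(n)}=\bfu_0^\ast$ (weak continuity in time coming from the $\rd_t$ bound), while weak-$\ast$ lower semicontinuity of the norm gives $\bfu\in L^\infty([0,\dlt_0];H^{d/2+1})$. This is precisely a solution of the kind excluded by Theorem~\ref{thm:nonexist}, the desired contradiction. Beyond the logarithmic bootstrap of the second paragraph—which I expect to be the main obstacle—the only other point requiring care is to verify that the limiting object meets whatever notion of ``solution'' is used in Theorem~\ref{thm:nonexist}; the construction above delivers at least a distributional solution attaining the data in $H^{d/2+1}$, which should be enough. (Alternatively, one could prove Theorem~\ref{thm:inf} directly by exhibiting smooth inflating data through a multi-scale vortex-stretching ansatz, but exploiting Theorem~\ref{thm:nonexist} as above is shorter.)
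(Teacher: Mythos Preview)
Your argument is correct but takes a genuinely different route from the paper's. The paper proves Theorem~\ref{thm:inf} \emph{directly}: it takes the same explicit axisymmetric data~\eqref{eq:nonexist-data} used for nonexistence, but truncated at a finite level $m<\infty$ so that it is $C^\infty$, and then reruns the short-time dynamics (Lemmas~\ref{lem:basic}--\ref{lem_Phid}, Key Lemma) under the contradiction hypothesis $\sup_{[0,T(m)]}\|\Lambda^{d/2}\omega\|_{L^2}\le A(m)$ with a carefully chosen $A(m)\sim m^{1+c_4-2\alpha}$ and $T(m)\sim m^{(\alpha-1)/2}$. The same lower bound~\eqref{lb_est} as in the nonexistence proof, restricted to $m/2\le n\le m$, then forces $\|\Lambda^{d/2}\omega(T)\|_{L^2}^2\ge c_3\sum_{m/2}^{m}n^{c_4-2\alpha}=2A(m)$, a contradiction. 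This yields an \emph{explicit quantitative} inflation rate and timescale.

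Your approach instead deduces the qualitative statement from Theorem~\ref{thm:nonexist} via mollification, the Brezis--Gallouet--Wainger bootstrap to a common lifespan, and Aubin--Lions compactness. This is a legitimate and well-known strategy, and your outline is sound: the lifespan step works exactly as you describe (the double-exponential $H^{s_0}$ bound need not be uniform in $n$), and the limit is a distributional solution in $L^\infty_t H^{d/2+1}$ attaining $\bfu_0^\ast$, which is precisely what Theorem~\ref{thm:nonexist} excludes (the paper's nonexistence proof applies to any such solution, since the $H^{d/2}$ vorticity bound forces $\nabla\bfu\in L^\infty_t BMO$, hence uniqueness and the Cauchy formula~\eqref{eq:cauchy}). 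The trade-off is clear: the paper's proof is constructive and quantitative, with explicit inflating data and rates; yours is softer and shorter once Theorem~\ref{thm:nonexist} is in hand, and illustrates the general principle that nonexistence at critical regularity implies norm inflation for nearby smooth data. One small point you could add for completeness: a radial mollifier preserves the axisymmetric-without-swirl structure of $\bfu_0^\ast$ (by the odd-in-angular-variable symmetry of the integrand), so the approximating solutions $\bfu^{(n)}$ remain in that class; this is not strictly needed for the black-box application of Theorem~\ref{thm:nonexist}, but it clarifies why uniqueness of the limit with data $\bfu_0^\ast$ holds.
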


We remark that the above statements for the two and three-dimensional cases have been already proved by Bourgain--Li in \cite{BL1} and \cite{BL3D}, respectively. We refer the interested readers to these works for an extensive list of literature on the Cauchy problem for the Euler equations. As Bourgain and Li are emphasizing in the introduction of \cite{BL3D}, the 3D case is much more involved than the 2D case due to the presence of the vortex stretching term, lack of $L^p$ conservation of the vorticity, absence of Yudovich theory, and the issues arising from dealing with the non-local norm $H^{5/2}$. A short proof for the 2D case was given in \cite{EJ}, and this simplified argument became a basis for further developments in the well-posedness theory, including continuous-in-time loss of Sobolev regularity (\cite{Jeong}), enstrophy growth in the Navier--Stokes equations (\cite{JY,JY2}), and strong ill-posedness for active scalar equations (\cite{JKim}). Some further developments in this direction are given in \cite{Kwon,Co}.  Therefore, we believe that the simplified argument presented in the current paper for three and higher dimensions will be useful in the further study of well-posedness for the high-dimensional Euler and related systems. Indeed, the ill-posed mechanism in critical spaces is closely related with recent groundbreaking works on finite-time singularity formation for the 3D Euler equations (\cite{Elgindi-3D,ChenHou}). Moreover, our simple proof is robust enough to handle all dimensions $d \ge 3$ at once. Lastly, we prove strong ill-posedness in critial Lorentz spaces which does not seem to follow from the approach of \cite{BL3D}, see technical discussion in \S \ref{subsec:ideas} below. 

\subsection{Axisymmetric Euler without swirl} The aforementioned ill-posedness results will be obtained by considering a class of solutions referred to as \textit{axisymmetric without swirl}: if the velocity takes the form \begin{equation}\label{eq:axisym-no-swirl}
	\begin{split}
		\bfu(t,x) = u^r(t,r,x_d) e^r + u^d(t,r,x_d) e^d 
	\end{split}
\end{equation} where $x = (x_h, x_d)$ with $r = |x_h|$, then it can be shown that this ansatz propagates in time for the evolution of \eqref{eq:euler}, upon having the uniqueness of a solution to \eqref{eq:euler}. In this case, introducing the scalar vorticity \begin{equation}\label{eq:vort-def}
\begin{split}
	\omg(t,x) = \rd_r u^d - \rd_d u^r 
\end{split}
\end{equation} which is again a function of $r$ and $x_d$, the Euler equations \eqref{eq:euler} reduce to a simple evolution equation for $\omg$: we have \begin{equation}\label{eq:Euler-axisym-no-swirl}
\begin{split}
	\rd_t \frac{\omg}{r^{d-2}} + (u^r \rd_r + u^d \rd_d) \frac{\omg}{r^{d-2}} = 0. 
\end{split}
\end{equation} The components $u^r$ and $u^d$ can be recovered from $\omg$ at each instant of time via the axisymmetric Biot--Savart law: \begin{equation}\label{eq:BS-r}
\begin{split}
	u^r(t,x) =  -\frac{1}{d|B_d|}  \frac{1}{|x_h|}\int_{\bbR^d} \frac{ (x_d-y_d) \sum_{j=1}^{d-1} x_j y_j }{|y_h||x-y|^d}  \omg(t,y) \, \ud y, 
\end{split}
\end{equation} \begin{equation}\label{eq:BS-d}
\begin{split}
	u^d (t,x) = \frac{1}{d|B_d|} \int_{\bbR^d} \frac{  \sum_{j=1}^{d-1} (x_j-y_j) y_j }{|y_h||x-y|^d}  \omg(t,y) \, \ud y,
\end{split}
\end{equation} where $|B_d|$ denotes the volume of the $d$-dimensional unit ball. 
Introducing the flow $\Phi = \Phi^r e^r + \Phi^d e^d$ by 
\begin{equation}\label{eq:flow}
	\begin{split}
		\Phi(0,x) = x \qquad \mbox{and} \qquad \frac {\mathrm{d}}{\mathrm{d}t} \Phi(t,x) = \bfu(t,\Phi(t,x)),
	\end{split}
\end{equation}
we see that solutions to \eqref{eq:Euler-axisym-no-swirl} satisfy 
\begin{equation}\label{eq:cauchy}
	\begin{split}
		\frac {\omega(t,\Phi(t,x))}{\Phi^r (t,x)^{d-2}} = \frac {\omega_0(x)}{r^{d-2}}
	\end{split}
\end{equation} along the flow. Note that along the flow, $\omg$ could either increase or decrease by the factor $(\Phi^r/r)^{d-2}$. This \textit{vortex stretching} effect can be alternatively seen from the equation for $\omg$, rather than $\omg/r^{d-2}$: \begin{equation*}
\begin{split}
		\rd_t  {\omg}  + (u^r \rd_r + u^d \rd_d) {\omg} = (d-2) \frac{u^r}{r}\omg . 
\end{split}
\end{equation*}

Regarding the axisymmetric without swirl equation \eqref{eq:Euler-axisym-no-swirl}, we have local well-posedness for any initial data satisfying $\omg_{0} \in L^1 \cap L^\infty(\bbR^d)$ and $r^{-(d-2)}\omg_{0} \in L^{d,1}(\bbR^d)$: there exists a unique corresponding local-in-time solution to \eqref{eq:euler} satisfying $\omg \in L^1 \cap L^\infty(\bbR^d)$ and $r^{-(d-2)}\omg \in L^{d,1}(\bbR^d)$ for some time interval. The solution is global in the case $d = 3$. This was proved by Danchin (\cite{Danchin}) and can be viewed as a sharp extension of the celebrated Yudovich theory \cite{Yudovich1963} in two dimensions. Unlike the two-dimensional case, the extra condition $r^{-(d-2)}\omg_{0} \in L^{d,1}(\bbR^d)$ is necessary to control vortex stretching, and seems to be \textit{sharp} (see \cite{Elgindi-3D}). Indeed, as a byproduct of our analysis, we were able to obtain the following result: 

\begin{theorem}\label{thm:Lorenz} For each $d\ge3$, there exists $1< \bar{q}_d < \infty$ such that the axisymmetric equation \eqref{eq:Euler-axisym-no-swirl} is \emph{ill-posed} for $\omg_{0} \in L^1 \cap L^\infty$ with $r^{-(d-2)}\omg_{0}\in L^{d,q}(\bbR^d)$ with any $\infty\ge q > \bar{q}_d$. 
\end{theorem}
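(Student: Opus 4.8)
The plan is to deduce Theorem \ref{thm:Lorenz} not from a new construction but by revisiting, through the lens of Lorentz spaces, the very initial data built in the proofs of Theorems \ref{thm:nonexist} and \ref{thm:inf} (or a mild variant of it, engineered so that its vorticity vanishes at the symmetry axis at the precise borderline rate). These data are axisymmetric without swirl, hence encoded by a scalar vorticity $\omg_0$ which in the present hands-on approach is essentially explicit: a superposition of rescaled bumps concentrating near the axis, with a lacunary sequence of scales and with amplitudes calibrated exactly at the threshold that keeps $\nrm{\bfu_0}_{H^{d/2+1}}$ finite and small while making $u_0^r/r$ --- equivalently $\nb\bfu_0$ --- miss boundedness by only a logarithmic margin. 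First I would check that $\omg_0 \in L^1\cap L^\infty(\bbR^d)$, which is immediate since each bump is smooth and compactly supported and the amplitudes are summable in the relevant senses; the same global rescaling that makes $\nrm{\bfu_0}_{H^{d/2+1}}$ small makes $\nrm{\omg_0}_{L^1\cap L^\infty}$ small as well.

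The heart of the argument is one computation: the decreasing rearrangement of $\xi_0 := r^{-(d-2)}\omg_0$. The weight $r^{-(d-2)}$ acts on bumps sitting at distance $\sim\ell$ from the axis inside tubes of $d$-dimensional measure $\sim\ell^{d-1}$; tracking values and measures scale by scale, the balance that is critical for $H^{d/2+1}$ translates into the asymptotics $\xi_0^*(t)\aeq t^{-1/d}(\log(1/t))^{-\alpha}$ as $t\to 0^+$, for a definite exponent $\alpha=\alpha(d)\in(0,1)$. Inserting this into the Lorentz quasinorm,
\begin{equation*}
	\nrm{\xi_0}_{L^{d,q}}^q \aeq \int_0^\infty \big(t^{1/d}\,\xi_0^*(t)\big)^q\,\frac{\ud t}{t} \aeq \int_0^{1/e} \big(\log(1/t)\big)^{-\alpha q}\,\frac{\ud t}{t} = \int_1^\infty s^{-\alpha q}\,\ud s < \infty \quad\Longleftrightarrow\quad q > \tfrac1\alpha ,
\end{equation*}
so one may take $\bar q_d := 1/\alpha(d)$. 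Two consistency checks are automatic: $\bar q_d<\infty$ because $\alpha>0$, which is exactly what finiteness of the critical Sobolev norm allows (the singularity of $\xi_0$ at the axis is only logarithmic, not genuinely scale-invariant); and $\bar q_d>1$ because $\alpha<1$ forces $\xi_0\notin L^{d,1}$ --- as it must, since otherwise $\omg_0$ would lie in Danchin's well-posedness class \cite{Danchin}, contradicting the nonexistence already established in Theorem \ref{thm:nonexist}.

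It then remains to transport the conclusions of Theorems \ref{thm:nonexist}--\ref{thm:inf} into this setting. Since the offending data lies in $L^1\cap L^\infty$ with $r^{-(d-2)}\omg_0\in L^{d,q}$ for every $q>\bar q_d$ (including $q=\infty$), and since it admits no solution in the natural class, respectively produces a solution whose $L^1\cap L^\infty$ norm inflates in arbitrarily short time, the axisymmetric equation \eqref{eq:Euler-axisym-no-swirl} is ill-posed in exactly the asserted range. The only care needed is that the nonexistence/inflation arguments use solely $(L^1\cap L^\infty,\,L^{d,q})$-level information --- the Biot--Savart control of $u^r/r$ and the transport identity \eqref{eq:cauchy} --- which the explicit nature of the construction makes routine; observe that $\nrm{r^{-(d-2)}\omg(t)}_{L^{d,q}}$ is conserved along the measure-preserving flow, so the obstruction must and does surface in $\nrm{\omg(t)}_{L^1\cap L^\infty}$ through the vortex-stretching factor $(\Phi^r/r)^{d-2}$.

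The step I expect to be the main obstacle is establishing the asymptotics $\xi_0^*(t)\aeq t^{-1/d}(\log(1/t))^{-\alpha}$ with a single, explicitly computable exponent $\alpha(d)\in(0,1)$ valid for all $d\ge3$. This requires reconciling three effects --- the lacunarity rate of the scales, the amplitude decay, and the anisotropic measure of near-axis tubes (which scales as $\ell^{d-1}$, not $\ell^d$) --- and verifying that after the $H^{d/2+1}$-critical calibration the resulting $\alpha$ is strictly between $0$ and $1$ in every dimension, so that $\bar q_d\in(1,\infty)$. A lesser point is to confirm that smallness in $H^{d/2+1}$, smallness in $L^1\cap L^\infty$, and $L^{d,q}$-membership of $r^{-(d-2)}\omg_0$ are all achieved by one and the same scaling of a fixed profile.
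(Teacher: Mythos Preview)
Your Lorentz computation is correct (at least for $d=3$) and matches the paper's \eqref{Lorentz_est}: $\|r^{-1}\omega_0\|_{L^{3,q}}^q \simeq \sum_n n^{-\alpha q}$, finite iff $q > 1/\alpha$. But the plan to deduce Theorem~\ref{thm:Lorenz} by transporting Theorems~\ref{thm:nonexist}--\ref{thm:inf} has a genuine gap. Those theorems use $\alpha \in (1/2, 3/4)$ and establish $H^{d/2+1}$ ill-posedness; Theorem~\ref{thm:Lorenz}, made precise in Proposition~\ref{prop:illposed}, is about $L^\infty$ norm inflation and the paper proves it \emph{separately} with the different range $0 < \alpha < c_2 < 1/4$. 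The distinction is mechanistic: $L^\infty$ growth comes solely from vortex stretching, which on the $m$-th bubble yields $\|\omega(T)\|_{L^\infty} \gtrsim m^{-\alpha}\cdot(\Phi^r/r) \gtrsim m^{c_2-\alpha}$, large only when $\alpha < c_2$. For the $H^{d/2+1}$-critical data with $\alpha > 1/2 > c_2$, the amplitude decay $m^{-\alpha}$ beats the stretching $m^{c_2}$ and $\|\omega\|_{L^\infty}$ stays bounded; the $H^{d/2}$ blow-up there is driven instead by squeezing in the $x_d$-direction (creation of high frequencies, via Lemma~\ref{lem_Phid} and \eqref{lb_est}), which is invisible to the $L^\infty$ norm.

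Consequently, your closing step --- that conservation of $\|r^{-(d-2)}\omega\|_{L^{d,q}}$ forces the obstruction into $\|\omega\|_{L^1\cap L^\infty}$ --- does not hold: a solution can remain in $L^1\cap L^\infty$ while leaving $H^{d/2}$. Likewise, Theorem~\ref{thm:nonexist} rules out solutions only in $L^\infty_t H^{d/2+1}$, not in $L^\infty_t(L^1\cap L^\infty)$, so no nonexistence in the Lorentz class follows. The paper's route is to run the contradiction argument of \S\ref{sec_std} directly against an $L^\infty$ hypothesis (using the $L^\infty$ branch of Lemma~\ref{key_lem}) with $\alpha$ chosen small; this gives the weaker threshold $\bar{q}_d \simeq 1/c_2$, whereas your hoped-for $\bar{q}_d < 2$ does not follow from the present construction.
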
 The precise ill-posedness statement is given in Proposition \ref{prop:illposed} below. 

\subsection*{Organization of the paper} The rest of the paper is organized as follows. In \S \ref{sec:prelim}, we explicitly show the choice of initial data used to prove the main theorems. After stating the Key Lemma which is the main technical tool, we explain the ideas of the proof. Then, in \S \ref{sec_std}, we make several key observations on dynamics of the solution associated with the initial data chosen in \S \ref{sec:prelim}. After that, Theorems \ref{thm:Lorenz}, \ref{thm:nonexist}, \ref{thm:inf} are proved in \S \ref{sec:Linfty}, \ref{sec:nonexistence}, \ref{sec:norm-inflation}, respectively. Finally, we prove the Key Lemma in \S \ref{sec:key-Lemma}.

\subsection*{Notation} We denote $B(x;r)$ be the open ball in $\bbR^d$ of radius $r$ centered at $x$. The $d$-dimensional volume of the unit ball in $\bbR^d$ is denoted by $|B_d|$. As it is usual, we write $A \lesssim B$ if there exists an absolute constant $C>0$ such that $A \le CB$. Furthermore, we say $A \simeq B$ if $A\lesssim B$ and $B \lesssim A$. 

\section{Preliminaries} \label{sec:prelim}

\subsection{Choice of initial data} We demonstrate the choice of initial data that will be used in the proof of main theorems.  To begin with, let $\phi: \bbR^2 \rightarrow \bbR_{\ge0}$ be a smooth bump function with the following properties: 
\begin{itemize}
	\item $\phi$ is $C^\infty$-smooth and radial;
	\item $\phi$ is supported in $B(0;\frac{1}{8})$ and $\phi = 1$ in $B(0;\frac{1}{32})$. 
\end{itemize} For some $n_0 < m \leq \infty$ and $0 < \alpha < 3/4$ to be determined later, we take
\begin{equation}\label{eq:nonexist-data}
	\begin{split}
		\omega_0 :=  \sum_{n=n_0}^{m}  n^{-\alp} \omega^{(n)}_{0, loc},
	\end{split}
\end{equation}
where
\begin{equation}\label{eq:nonexist-bubble}
	\begin{split}
		\omega^{(n)}_{0, loc}(r,x_d) := \phi( 8^n(r-8^{-n+1}, x_d-8^{-n} ) ) - \phi( 8^n(r-8^{-n+1}, x_d+8^{-n} ) ).
	\end{split}
\end{equation}
The precise value of $\alp$ (which determines the regularity of \eqref{eq:nonexist-data}) will be specified in the proofs of each theorems. We observe that $\omg_0$ is an odd function with respect to $x_d$, and has the form of a weighted sum of normalized \textit{bubbles}; we shall sometimes (informally) refer to $n^{-\alp} \omega^{(n)}_{0, loc}$ as the $n$-th bubble.

\subsection{The Key Lemma}\label{sec:key} We now state the main technical tool of this paper. 
\begin{lemma}\label{key_lem}
	We impose the following assumptions on $\omega \in H^{\frac d2} \cup (L^{\infty} \cap L^2) (\bbR^d)$:
	\begin{itemize}
		\item Odd with respect to the last coordinate: 
		\begin{equation}\label{cond_d_sym}
			\begin{split}
				\omega(y_h,y_d) = -\omega(y_h,-y_d);
			\end{split}
		\end{equation}
		\item For any $y \ne 0$ satisfying either $|y_h| = 0$ or $y_d=0$, there exists an open neighborhood of $y$ such that $\omega$ vanishes. 
	\end{itemize} Then, there exists a constant $C>0$ such that for any $x \in \bbR^d$ with $r=|x_h| \geq x_d > 0$, we have 
	\begin{equation}\label{ur_est}
		\begin{split}
			\left| \frac {u^r(x)}{r} -  {\frac 1{(d-1)|B_d|}} \int_{Q(x)} \frac { |y_h| y_d }{|y|^{d+2}} \omega(y) \,\mathrm{d}y \right| \le C B_1(x)
		\end{split}
	\end{equation}
	and
	\begin{equation}\label{ud_est}
		\begin{split}
			\left| \frac {u^d(x)}{x_d} + {\frac 1{|B_d|}} \int_{Q(x)} \frac { |y_h| y_d }{|y|^{d+2}} \omega(y) \,\mathrm{d}y \right| \le C B_2(x) ,
		\end{split}
	\end{equation}
	where $Q(x) := \{ y \in \bbR^d ; |y_h| \geq 4|x_h| \}$ and $B_1$, $B_2$ are non-negative functions with the upper bound
	\begin{equation*}
		\begin{split}
			B_1(x) \leq \min \left\{ \| \nabla \omega \|_{L^d(\bbR^d)}, \| \omega \|_{L^{\infty}(\bbR^d)} \right\},
		\end{split}
	\end{equation*}
	\begin{equation*}
		\begin{split}
			B_2(x) \leq \min \left\{ \left(1 + \log \frac {r}{x_d} \right)^{\frac {d-1}d} \| \nabla \omega \|_{L^d(\bbR^d)}, \left(1 + \log \frac {r}{x_d} \right) \| \omega \|_{L^{\infty}(\bbR^d)} \right\}.
		\end{split}
	\end{equation*}
\end{lemma}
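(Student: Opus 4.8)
### Proof Strategy for the Key Lemma

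The plan is to start from the axisymmetric Biot--Savart formulas \eqref{eq:BS-r}--\eqref{eq:BS-d} and extract the leading-order behavior of $u^r/r$ and $u^d/x_d$ when $x$ is close to the axis (i.e.\ when $x_d$ is small relative to $r$), using the oddness \eqref{cond_d_sym} to symmetrize the kernel. First I would rewrite \eqref{eq:BS-r} and \eqref{eq:BS-d} by folding the $y_d \mapsto -y_d$ reflection: since $\omega(y_h,-y_d) = -\omega(y_h,y_d)$, pairing the contributions from $y_d$ and $-y_d$ replaces the kernels $\frac{(x_d - y_d)\,x\cdot y_h}{|x-y|^d}$ and its analogue by their antisymmetric parts, which for $|x_d|$ small behave like $x_d$ (resp.\ $y_d$) times a kernel homogeneous of degree $-(d+2)$ — this is where the prefactors $1/r$ and $1/x_d$ and the claimed main term $\int \frac{|y_h| y_d}{|y|^{d+2}}\omega\,\mathrm{d}y$ come from. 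The restriction to $Q(x) = \{|y_h| \ge 4|x_h|\}$ is natural: on $Q(x)$ one has $|x-y| \simeq |y| \simeq |y_h|$, so the kernel can be Taylor-expanded around $x = 0$ with controlled errors, while the complementary region $Q(x)^c$ (together with the "near-diagonal" zone) must be estimated directly and absorbed into $B_1, B_2$.

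The key steps, in order, are: (i) symmetrize both integrals using \eqref{cond_d_sym}; (ii) on $Q(x)$, expand the symmetrized kernel as [main term homogeneous in $y$ alone] $+$ [remainder], and bound the remainder pointwise by something like $\frac{|x_d|}{|y|^{d+1}} \cdot \mathbf{1}_{|y_h| \ge 4|x_h|}$ or $\frac{|x_h|^2 + x_d^2}{|y|^{d+2}}$; (iii) estimate the $Q(x)^c$ contribution, where $|y_h| \le 4|x_h|$, splitting further into the region very close to $x$ (handled using the second hypothesis — $\omega$ vanishes near the axis and near $\{y_d = 0\}$, so there is no singularity of $\omega$ itself there, only the kernel singularity, which is integrable after using $|\omega(y)| \lesssim |x_d|\|\nabla\omega\|$ or similar near $\{y_d=0\}$) and the region at intermediate scales; (iv) convert the resulting integrals of $\frac{1}{|y|^{d+1}}$-type kernels against $\nabla\omega$ or $\omega$ into the stated bounds via Hölder's inequality with the Lorentz/$L^d$--$L^{d'}$ pairing, which is exactly what produces the $\|\nabla\omega\|_{L^d}$ bound, and via a crude $L^\infty$ estimate on an annulus of controlled thickness for the other bound. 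The logarithmic factor $(1 + \log(r/x_d))$ in $B_2$ should emerge in step (iii)/(iv) from integrating $\frac{\mathrm{d}|y|}{|y|}$ over the dyadic range of scales between $x_d$ and $r$ — this log-divergence is present for $u^d$ but not for $u^r$ because of the extra cancellation in the $u^r$ kernel.

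The main obstacle I expect is step (iii): controlling the contribution near the point $x$ itself and near the axis. Because $x$ satisfies only $r \ge x_d > 0$, the point $x$ can be genuinely close to the plane $\{y_d = 0\}$, and the kernel $|x-y|^{-d}$ in the original Biot--Savart law is only borderline integrable. The hypothesis that $\omega$ vanishes in a neighborhood of the axis and of $\{y_d = 0\}$ is what saves this — but quantitatively one must use that $|\omega(y)| \lesssim |y_d| \min\{\|\nabla\omega\|_{L^\infty}, \ldots\}$ is \emph{not} available (only $L^d$ control of $\nabla\omega$), so instead the argument must exploit the structure that on $Q(x)^c$ the relevant piece of the kernel, after symmetrization, is a bounded-in-$L^1$-on-annuli object tested against $\omega$, or use a Hardy-type inequality in the $y_d$ variable to trade the $y_d$-vanishing of $\omega$ against a $\frac{1}{y_d}$ factor in the kernel. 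Getting the two-sided bounds (the $L^d$-version and the $L^\infty$-version) to both come out cleanly, with the correct power $\frac{d-1}{d}$ on the logarithm in the $L^d$ case, is the delicate bookkeeping; I would handle the $L^\infty$ bound first as a warm-up and then refine to the $L^d$ bound by interpolating the scale-by-scale estimates.
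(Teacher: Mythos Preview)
Your strategy is essentially the same as the paper's: Taylor-expand the kernel on $Q(x)$ to extract the main term, estimate the complementary region directly, and use H\"older/Hardy to get the two bounds, with the logarithm appearing in the near-field piece for $u^d$. A few points where the paper is more concrete than your outline, and where your description is slightly off:

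\begin{itemize}
\item The paper uses a three-region split $Q(x)\cup R(x)\cup S(x)$, with $R(x)=\{|y_h|\le 4|x_h|,\ |y_d|\ge 4|x_h|\}$ and $S(x)=\{|y_h|\le 4|x_h|,\ |y_d|\le 4|x_h|\}$; the $R$-piece is trivial ($|x-y|\simeq|y_d|$) and the work is in $S$.
\item For $u^r$, the paper does \emph{not} symmetrize in $y_d$ first; it Taylor-expands $|x-y|^{-d}$ directly on $Q(x)$ and then uses \emph{both} symmetries --- the $y_d$-oddness and the radial evenness $\omega(-y_h,y_d)=\omega(y_h,y_d)$ coming from axisymmetry --- to kill the unwanted Taylor coefficients. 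Your plan mentions only the former.
\item The near-field piece $S(x)$, which you flagged as the obstacle, is handled by integration by parts rather than a Hardy inequality in $y_d$: for $u^r$ one integrates by parts in $y_d$ (the kernel factor $(x_d-y_d)|x-y|^{-d}$ is an exact $\partial_{y_d}$), and for $u^d$ one integrates by parts in $y_h$ after writing $(x_h-y_h)\bigl(|x-y|^{-d}-|x-\bar y|^{-d}\bigr)$ as a horizontal gradient. The boundary terms are then bounded by the fundamental theorem of calculus and H\"older, which is where the $\|\nabla\omega\|_{L^d}$ norm enters.
\item The $L^\infty$ and $L^d$ bounds are obtained by two parallel, separate computations on each region --- not by interpolating scale-by-scale estimates as you suggest. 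In particular the exponent $\tfrac{d-1}{d}$ on the logarithm comes directly from H\"older on the $S(x)$ integral after the integration by parts, not from any interpolation.
\end{itemize}
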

\begin{remark}
	Gagliardo--Nirenberg interpolation inequality implies that
	\begin{equation*}
		\begin{split}
			\| \nabla \omega \|_{L^d(\bbR^d)} \leq C\| \Lambda^{\frac d2} \omega \|_{L^2(\bbR^d)}.
		\end{split}
	\end{equation*}	
\end{remark} 

The ``Key Lemma'' has first appeared in a celebrated work of Kiselev--Sverak \cite{KS} for the two-dimensional Euler equations, although Bourgain and Li were using similar formulas in \cite{BL1}; see also \cite{Z,KRYZ,JY}. For the three-dimensional Euler equations, a similar result with a different remainder bound was used in \cite{Elgindi-3D,ChenHou}. Our version of Key Lemma is sharp in that the remainder is bounded using only critical norms of the Euler equations. Therefore, we believe that this lemma will be useful for other purposes, for instance in the construction of smooth solutions with rapid Sobolev norm growth. While we defer the (somewhat tedious) proof later to \S \ref{sec:key-Lemma}, one may observe from the axi-symmetric Biot--Savart law that in the limit $x\to 0$, we exactly have \begin{equation*}
	\begin{split}
		\rd_r u^r (0) = \lim_{x \to 0} \frac {u^r(x)}{r} =  {\frac 1{(d-1)|B_d|}} \int_{ \bbR^d } \frac { |y_h| y_d }{|y|^{d+2}} \omega(y) \,\mathrm{d}y, \quad \rd_d u^d (0) = \lim_{x \to 0} \frac {u^d(x)}{x_d} = - {\frac 1{|B_d|}} \int_{\bbR^3} \frac { |y_h| y_d }{|y|^{d+2}} \omega(y) \,\mathrm{d}y . 
	\end{split}
\end{equation*}

\subsection{Hardy's inequality}

We now recall the famous Hardy's inequality. We omit the proof since it can be easily obtained using an integration by parts. 
\begin{lemma}[Hardy's inequality]\label{lem_hd}
	Let $f$ be a positive smooth function defined on the interval $(0,1)$ with $f(0) = 0$. Then for any $p>1$, we have
	\begin{equation*}
		\begin{split}
			\nrm{x^{-1}f(x)}_{L^p(0,1)} \leq C \nrm{\partial_{x} f(x)}_{L^p(0,1)}.
		\end{split}
	\end{equation*}
\end{lemma}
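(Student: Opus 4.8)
The plan is to reduce the weighted estimate to the one–dimensional Hardy inequality for a \emph{non-negative} function and to prove the latter by a truncated integration by parts. Set $g := \abs{\rd_x f}$ and $F(x) := \int_0^x g(t)\,\ud t$. If $\nrm{\rd_x f}_{L^p(0,1)} = \infty$ there is nothing to prove, so assume it is finite; since $p>1$ and $(0,1)$ has finite measure this forces $\rd_x f \in L^1(0,1)$, hence $f$ is absolutely continuous on $[0,1)$ and $f(x) = \int_0^x \rd_x f(t)\,\ud t$. As $f \ge 0$ this gives $0 \le f(x) \le F(x)$, and since $\nrm{g}_{L^p(0,1)} = \nrm{\rd_x f}_{L^p(0,1)}$, it suffices to prove $\nrm{x^{-1}F}_{L^p(0,1)} \le C \nrm{g}_{L^p(0,1)}$.

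The core step is, for fixed $0 < \eps < 1$, the identity obtained by integrating by parts on $(\eps,1)$, using that $-\tfrac{x^{1-p}}{p-1}$ is a primitive of $x^{-p}$ and that $F' = g$:
\begin{equation*}
	\int_\eps^1 \frac{F(x)^p}{x^p}\,\ud x = -\frac{F(1)^p}{p-1} + \frac{F(\eps)^p}{(p-1)\eps^{p-1}} + \frac{p}{p-1}\int_\eps^1 \frac{F(x)^{p-1}}{x^{p-1}}\,g(x)\,\ud x.
\end{equation*}
The boundary term at $x=1$ is $\le 0$ and is dropped. For the term at $x=\eps$, Hölder's inequality gives $F(\eps) \le \eps^{1-1/p}\nrm{g}_{L^p(0,\eps)} \le \eps^{1-1/p}\nrm{g}_{L^p(0,1)}$, so it is bounded by $\tfrac{1}{p-1}\nrm{g}_{L^p(0,1)}^p$, uniformly in $\eps$. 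Applying Hölder once more to the last integral (with exponents $\tfrac{p}{p-1}$ and $p$) and writing $I_\eps := \int_\eps^1 F^p x^{-p}\,\ud x$ — which is finite because $F$ is bounded on $[\eps,1]$ and $x \ge \eps$ there — one arrives at
\begin{equation*}
	I_\eps \le \frac{1}{p-1}\nrm{g}_{L^p(0,1)}^p + \frac{p}{p-1}\,\nrm{g}_{L^p(0,1)}\, I_\eps^{\frac{p-1}{p}} .
\end{equation*}
Young's inequality absorbs the last term into $\tfrac12 I_\eps$ at the cost of a constant multiple of $\nrm{g}_{L^p(0,1)}^p$, yielding $I_\eps \le C_p \nrm{g}_{L^p(0,1)}^p$ with $C_p$ independent of $\eps$. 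Letting $\eps \downarrow 0$ and invoking monotone convergence gives $\int_0^1 F^p x^{-p}\,\ud x \le C_p \nrm{g}_{L^p(0,1)}^p$, which is the claim; if one wanted the sharp constant $C = \tfrac{p}{p-1}$, one could now pass to the limit a second time directly in the identity (the $\eps$-boundary term then tends to $0$ by absolute continuity of the integral), but this refinement is not needed here.

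The only genuine subtlety — and the reason for the truncation — is that one cannot integrate by parts directly on $(0,1)$ without knowing in advance that $x^{-1}F \in L^p$, the boundary term at the origin being precisely that obstruction; the device of working on $(\eps,1)$, controlling the $\eps$-boundary term by Hölder, absorbing with Young to get an $\eps$-uniform bound, and only then sending $\eps \to 0$ circumvents it. As an even shorter alternative avoiding the a priori issue entirely, one may observe $x^{-1}F(x) = \int_0^1 g(xs)\,\ud s$ and apply Minkowski's integral inequality in $L^p(0,1)$:
\begin{equation*}
	\nrm{x^{-1}F}_{L^p(0,1)} \le \int_0^1 \nrm{g(x s)}_{L^p_x(0,1)}\,\ud s = \int_0^1 s^{-1/p}\nrm{g}_{L^p(0,s)}\,\ud s \le \frac{p}{p-1}\,\nrm{g}_{L^p(0,1)} .
\end{equation*}
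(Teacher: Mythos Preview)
Your proof is correct and follows exactly the route the paper indicates: the paper omits the argument entirely, saying only that it ``can be easily obtained using an integration by parts,'' and your truncated integration-by-parts computation on $(\eps,1)$ with the absorption via Young's inequality is the standard way to make that remark rigorous. The Minkowski alternative you append is a clean independent route that even recovers the sharp constant $\tfrac{p}{p-1}$.
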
 

\subsection{Ideas of the proof}\label{subsec:ideas}

Let us illustrate the main ideas of the proof, which have been inspired by the previous works \cite{EJ,JY,JKim}. The interested reader can find more information there; in the current setup, the main difference is the presence of vortex stretching. We consider the case of $\bbR^3$. 
\begin{itemize}
	\item \textit{Formal computation and mechanism of illposedness.} With a few simple and formal computations, let us explain the reasoning behind the choice of our initial data. To begin with, at the initial time, the velocity is roughly given by \begin{equation*}
		\begin{split}
			\begin{pmatrix}
				u^r_0 \\ u^z_0
			\end{pmatrix} = \begin{pmatrix}
			r \\ -2z 
		\end{pmatrix} (\ln\frac{1}{|x|})^{1-\alp} \varphi(\frac{x}{|x|}), 
		\end{split}
	\end{equation*} when $n_0=1$ and $m=\infty$ in \eqref{eq:nonexist-data}, which can be seen by the main term of the Key Lemma. Here, $\varphi$ is a bounded function supported in the region $r, |z|\simeq |x|$. Then, using that the variable $\xi:=\omg/r$ is simply being transported by the flow,  we compute\begin{equation*}
		\begin{split}
			\frac{d}{dt}|_{t=0} \int_{\bbR^3} \frac{\xi^2}{z} \, \ud x = \int_{\bbR^3} \frac{-u^z_0}{z} \frac{\xi_0^2}{z} \, \ud x   \simeq \int_{\{|x| \le 1\} \cap \bbR^{3}}  (\ln \frac{1}{|x|})^{1-3\alp} \frac{1}{|x|^3} \, \ud x  =   +\infty
		\end{split}
	\end{equation*} when $\alp<2/3$.  That is, the quantity $\nrm{z^{-\frac12}r^{-1}\omg}_{L^2}$ immediately blows up for $t>0$, which implies the blow up of the critical norm $\nrm{\omg}_{H^{3/2}}$ as it controls $\nrm{z^{-\frac12}r^{-1}\omg}_{L^2}$ thanks to the Hardy's inequality. Indeed, when $\alp$ is slightly larger than $1/2$, the initial vorticity ``barely'' belongs to the space $H^{3/2}$ in a way that the corresponding velocity is not Lipschitz continuous in space. Geometrically, the ill-posedness comes from the squeezing of the vorticity in the $z$-direction by the strong hyperbolic flow. In the case of $L^\infty$, we can similarly compute \begin{equation*}
		\begin{split}
			\frac{d}{dt}|_{t=0} \nrm{\omg}_{L^\infty} = \nrm{ r^{-1}u_{0}^{r} \omg_0 }_{L^\infty} \simeq \sup_{|x|\le1} (\ln\frac{1}{|x|})^{1-2\alp} = +\infty 
		\end{split}
	\end{equation*} when $\alp<1/2$, and one clearly sees that the ill-posedness comes from vortex stretching. 
	\item \textit{Monotonicity and stability.} While the above computation shows that ill-posedness occurs for the linear advection equation obtained by replacing $u(t)$ by $u_0$, for the nonlinear problem we need to understand the effects on the velocity caused by evolution of the vorticity, and in particular need to make sure that the velocity gradient (given by the main terms in the Key Lemma) remains large enough to create norm inflation. There are two competing effects; while the squeezing of the vorticity weakens the induced velocity gradient, the vortex stretching term strengthens it. As in our previous work \cite{JKim}, we identify a timescale for each of the bubbles during which it essentially remains the same, and obtain a lower bound on the velocity gradient based on such a stability statement. It turns out that, \textit{together with the help of the vortex stretching term}, such a lower bound is sufficient to prove strong ill-posedness results. 
	\item \textit{Contradiction hypothesis and Lagrangian property.} Throughout the proof, we heavily rely on the Lagrangian analysis, and in particular on the fact that the solutions of the axisymmetric Euler equations satisfy the Cauchy formula \eqref{eq:cauchy}. The Lagrangian analysis is not only essential in proving the stability statement described in the above but also allows us to obtain sharp bounds on the critical norms. Fortunately, existence and uniqueness of the flow map in our case is guaranteed by the contradiction hypothesis that there is a local solution in the critical space. 
\end{itemize}

\begin{remark}\label{rmk:BL}
	At this point, let us point out the main difference of our argument with the original proof of Bourgain--Li: in \cite{BL3D}, to prove ill-posedness in $H^{3/2}(\bbR^3)$,  the authors very carefully identify a class of initial data such that there is local well-posedness in $L^\infty(\bbR^3)$ (yet escapes $H^{3/2}$). In their work, a significant portion is devoted to proving propagation of the $L^\infty$--norm with such data. After having $L^\infty$, one can then try to repeat the strategy developed in the 2D case. On the other hand, we proceed by a direct contradiction argument which does not require the hypothetical solution to be bounded in $L^\infty$. Furthermore, we are able to prove ill-posedness in $L^\infty$ as well: the vortex stretching effect is crucial in Theorem \ref{thm:Lorenz}.
\end{remark}

\begin{remark}
	Very recently, An--Chen--Yin \cite{com1,com2} proved strong ill-posedness of the \textit{compressible} Euler equations in critical Sobolev spaces for dimensions 2 and 3. The ill-posedness in the compressible Euler case seems to be driven by instantaneous shock formation.  
\end{remark}

\section{Short time dynamics}\label{sec_std}
We consider $\omega_{0}$ of the form \eqref{eq:nonexist-data} and proceed under the assumption that there exists an associated solution to \eqref{eq:Euler-axisym-no-swirl}
\begin{equation}\label{omg_class}
	\begin{split}
		\omega \in L^{\infty}([0,T];H^{\frac d2} \cup (L^{\infty} \cap L^2) (\bbR^d))
	\end{split}
\end{equation}
with \begin{equation}\label{eq:sol-bound}
	\begin{split}
		\sup_{t \in [0,T]} \min \left\{ \| \Lambda^{\frac d2} \omega(t) \|_{L^2 (\bbR^d)}, \| \omega(t) \|_{L^{\infty}(\bbR^d)} \right\} \leq A
	\end{split}
\end{equation} for some $T>0$ and $A>0$. By this assumption, we can find a constant $B>0$ such that
\begin{equation*}
	\begin{split}
		\| \nabla u(t) \|_{BMO(\bbR^d)} + \| \omega(t) \|_{L^2(\bbR^d)} \leq B, \qquad t \in [0,T].
	\end{split}
\end{equation*} This follows from the embedding $H^{d/2} \subset BMO$ and the singular integral operator bound $L^\infty \to BMO$. Moreover, the $BMO$ bound  implies that the equation \eqref{eq:Euler-axisym-no-swirl} possesses at most one solution belonging to the class \eqref{omg_class} and the velocity $u(t,\cdot)$ is log-Lipschitz (see \cite{JKim,Bed}): there exists a constant $C>0$ such that
\begin{equation*}
	\begin{split}
		\sup_{t \in [0,T]} |u(t,x) - u(t,y)| \leq C B |x-y| \log \left( 10 + \frac 1{|x-y|} \right), \qquad x,y \in \bbR^d.
	\end{split}
\end{equation*} This guarantees that the flow map $\Phi$ is well-defined on $t \in [0,T]$ as the unique solution of \eqref{eq:flow}. Thus, we can verfiy that the unique solution $\omega$ is given by \eqref{eq:cauchy} and in particular Lemma~\ref{key_lem} applies to $\omg(t,\cdot)$ on the time interval $[0,T]$. We shall let $T$ be small so that $AT \leq c_0$ for some absolute constant $c_0>0$, which is determined at the end of this section.  We begin with the simple result.
\begin{lemma}\label{lem:basic}
	Assume that $\omega$ be a solution to \eqref{eq:Euler-axisym-no-swirl} satisfying \eqref{eq:sol-bound} with initial data \eqref{eq:nonexist-data}. Then, we have $\omega(t,x) = 0$ for all $x \in \bbR^d$ with $0 \leq |x_h| < |x_d|$ and $t \in [0,T]$.
\end{lemma}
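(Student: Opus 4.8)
The plan is to run a Lagrangian argument based on the Cauchy formula \eqref{eq:cauchy}. Since the flow map $\Phi(t,\cdot)$ is a well-defined homeomorphism of $\bbR^d$ on $[0,T]$ (as recalled just above the lemma) and $\omg(t,\Phi(t,x)) = (\Phi^r(t,x)/r)^{d-2}\omg_0(x)$, one has $\supp\omg(t,\cdot)\subseteq\Phi(t,\supp\omg_0)$; hence it suffices to show that every trajectory issued from $x\in\supp\omg_0$ stays in the closed cone $\mathcal C:=\{|x_h|\ge|x_d|\}$ for $t\in[0,T]$. A direct inspection of \eqref{eq:nonexist-bubble} shows the $n$-th bubble is supported in $\{\,r\in[63,65]\cdot 8^{-n-1},\ |x_d|\in[7,9]\cdot 8^{-n-1}\,\}$, so in fact $\supp\omg_0\subseteq\{|x_h|\ge 7|x_d|\}$: there is a uniform angular gap between the initial support and $\partial\mathcal C$, and the whole point is that the short-time flow cannot close this gap.

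First I would record the symmetry and sign structure. By uniqueness in the class \eqref{omg_class}, $\omg(t,\cdot)$ stays odd in $x_d$, so $u^d(t,\cdot)$ vanishes on $\{x_d=0\}$ and the open half-space $\{x_d>0\}$ is flow-invariant (likewise the axis $\{x_h=0\}$); in particular $\Phi^r(t,x)>0$ for $x\in\supp\omg_0\setminus\{0\}$, and $\Phi^d(t,x)>0$ for all $t\in[0,T]$ when $x_d>0$. By oddness it is enough to treat such $x$. Moreover $\omg_0\ge 0$ on $\{x_d>0\}$ and $\le 0$ on $\{x_d<0\}$ (clear from \eqref{eq:nonexist-bubble} and $\phi\ge 0$), so by \eqref{eq:cauchy} and $\Phi^r>0$ the same sign pattern propagates to $\omg(t,\cdot)$. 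Consequently the integrand $\frac{|y_h|y_d}{|y|^{d+2}}\omg(t,y)$ is pointwise nonnegative, and therefore the quantity $I(x):=\int_{Q(x)}\frac{|y_h|y_d}{|y|^{d+2}}\omg(t,y)\,\ud y$ appearing in Lemma~\ref{key_lem} is $\ge0$ at every point $x$ and every time $t$.

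The core is then a bootstrap for $\ell(t):=\log\!\big(\Phi^r(t,x)/\Phi^d(t,x)\big)$, starting from $\ell(0)\ge\log 7>0$. While $\ell(t)\ge 0$ we may apply Lemma~\ref{key_lem} at the point $\Phi(t,x)$; from $\tfrac{\ud}{\ud t}\log\Phi^r=u^r(\Phi)/\Phi^r$ and $\tfrac{\ud}{\ud t}\log\Phi^d=u^d(\Phi)/\Phi^d$, the estimates \eqref{ur_est}--\eqref{ud_est} together with $I\ge0$ show that the two \emph{main} terms can only increase $\ell$, so that
\begin{equation*}
\ell'(t) \ge -C\big(B_1(\Phi(t,x))+B_2(\Phi(t,x))\big).
\end{equation*}
Using the bounds for $B_1,B_2$ in Lemma~\ref{key_lem} (with $\log(r/x_d)$ evaluated at $\Phi(t,x)$ equal to $\ell(t)$), the Gagliardo--Nirenberg remark, and \eqref{eq:sol-bound}, the right-hand side is at least $-CA(1+\ell(t))$, so $(1+\ell)'\ge -CA(1+\ell)$ and Gronwall gives $1+\ell(t)\ge(1+\log 7)e^{-CAT}$ on the interval where $\ell\ge0$. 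Taking the constant $c_0$ in $AT\le c_0$ small enough that $e^{-Cc_0}\ge\tfrac12$ forces $1+\ell(t)\ge\tfrac12(1+\log 7)>1$ there, and a standard continuity/bootstrap argument promotes this to all of $[0,T]$. Hence $\Phi^r(t,x)>\Phi^d(t,x)>0$ for $t\in[0,T]$, i.e. $\Phi(t,x)\in\mathcal C$; the case $x_d<0$ follows by oddness, and $x_d=0$ as well as $x=0$ are trivial (then $\Phi^d\equiv 0$ or $\Phi\equiv 0$), so $\supp\omg(t,\cdot)\subseteq\mathcal C$ and the lemma follows.

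The main obstacle is the closing of the Gronwall: the error terms of Lemma~\ref{key_lem} are controlled only by \emph{critical} norms, and the crucial feature is that $B_2$ is bounded by (a power of) $1+\log(r/x_d)=1+\ell$ at the relevant point rather than by $1+\log(1/\Phi^d)$ — this is what makes the estimate uniform over all bubbles $n$, and is the reason the cruder log-Lipschitz bound on $u$ is insufficient. The positivity $I\ge0$ is likewise essential, since the main part of $u^d/\Phi^d$ by itself is only bounded by $A\log(1/\Phi^r)$. This estimate is one of the places pinning down the absolute constant $c_0$ fixed at the end of \S\ref{sec_std}.
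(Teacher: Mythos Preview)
Your argument is correct and follows the same overall scheme as the paper's proof: use the Cauchy formula to reduce to tracking $\ell=\log(\Phi^r/\Phi^d)$, invoke the Key Lemma together with the positivity $\int_{Q}\frac{|y_h|y_d}{|y|^{d+2}}\omega\,\mathrm{d}y\ge0$ so that only the error terms $B_1,B_2$ work against you, and conclude via $AT\le c_0$ that trajectories from $\supp\omega_0$ never reach $\{|x_h|=|x_d|\}$.

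The one technical difference is how the logarithmic factor in $B_2$ is handled. You absorb it by a Gronwall inequality for $1+\ell$, using that $B_2\lesssim A(1+\ell)$ at the point $\Phi(t,x)$. The paper instead avoids this log altogether by a barrier argument: it restricts attention to the intermediate strip $\{1/2\le x_d/r\le 1\}$, where $\log(r/x_d)\le\log 2$ and hence $B_2\le CA$ outright; in that strip one gets the simple linear bound $\tfrac{\mathrm{d}}{\mathrm{d}t}\log(\Phi^d/\Phi^r)\le CA$, so a trajectory entering the strip at $\Phi^d/\Phi^r=1/2$ satisfies $\log(\Phi^d/\Phi^r)\le\log\tfrac12+CAT\le 0$ and cannot reach $\Phi^d/\Phi^r=1$. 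The strip version is marginally more elementary (no Gronwall, and it does not rely on the specific value $\log 7$ for $\ell(0)$), while your version makes more transparent the role of the scale-invariant form of the $B_2$ bound you highlight in the last paragraph.
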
 \begin{proof}
	We recall Lemma~\ref{key_lem} with $x$ on the region $\{(r,x_d) \in \bbR_{>0} \times \bbR_{>0};1/2 \leq x_d/r \leq1\}$
	\begin{equation*}
		\begin{split}
			\frac {u^r(t,x)}{r} \geq -CA, \qquad \frac {u^d(t,x)}{x_d} \leq CA.
		\end{split}
	\end{equation*}
	Using these inequalities, we show that $\Phi^d(t,x)/\Phi^r(t,x) \leq 1$ for all $x$ with $x_d/r \leq 1/2$ and $t \in [0,T]$. We don't need to consider $x$ with $x_d/r > 1/2$ due to the definition of $\omega_0$. Suppose that $\Phi(t,x)$ enters the region at $t = T^*\leq T$. Until escaping the area, the trajectory complies with
	\begin{equation*}
		\begin{split}
			\frac {\ud}{\ud t} \log \frac {\Phi^d(t,x)}{\Phi^r(t,x)} \leq CA.
		\end{split}
	\end{equation*}
	Then for $CAT \leq C c_0 \leq \log 2$, it follows
	\begin{equation*}
		\begin{split}
			\log \frac {\Phi^d(t,x)}{\Phi^r(t,x)} \leq \log \frac 12 + CAT \leq 0.
		\end{split}
	\end{equation*}
	From this, we conclude that $\Phi(t,x)$ cannot touch the line $r=x_d$. This completes the proof.
\end{proof} 

This lemma guarantees that $\Phi(t,x)$ with $x \in \supp (\omega_0)$ can be traced by Lemma~\ref{key_lem} up to $t \in [0,T]$. For simplicity, let us write $n$-th bubble as $\Omega_n := \supp (\omega^{(n)}_{0, loc})$. The next result provides that the bubbles are ``well-ordered": each size of bubble and distance of adjacent bubbles in direction $r$ are maintained to some extent.
\begin{lemma}\label{lem_order}
	Assume that $\omega$ be a solution to \eqref{eq:Euler-axisym-no-swirl} satisfying \eqref{eq:sol-bound} with initial data \eqref{eq:nonexist-data}. Then, we have
	\begin{equation}\label{eq:claim}
		\begin{split} 
			\sup_{x \in \Omega_n} \Phi^r(t,x) \leq 4\inf_{x \in \Omega_n} \Phi^r(t,x) \qquad \mbox{and} \qquad 4 \sup_{x \in \Omega_{n+1}} \Phi^r(t,x) \leq \inf_{x \in \Omega_n} \Phi^r(t,x)		
		\end{split}
	\end{equation} for all $n \geq n_0$ and $t \in [0,T]$. 
\end{lemma}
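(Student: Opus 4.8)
The idea is to control the radial component of the flow, $\Phi^r(t,x)$, via a differential inequality obtained by feeding the Key Lemma bound on $u^r/r$ into the ODE for $\log \Phi^r$. By Lemma~\ref{lem:basic}, every point $x$ in $\supp(\omega_0)$ stays in the region where Lemma~\ref{key_lem} applies, so along the flow
\begin{equation*}
	\frac{\ud}{\ud t} \log \Phi^r(t,x) = \frac{u^r(t,\Phi(t,x))}{\Phi^r(t,x)}.
\end{equation*}
The Key Lemma gives $|u^r(t,y)/|y_h|| \le C'A$ on the relevant region, with the leading integral term also bounded by $CA$ (since it is controlled by $B_1 \le \min\{\|\nabla\omega\|_{L^d}, \|\omega\|_{L^\infty}\} \le A$ up to constants — note the leading term in \eqref{ur_est} is itself an integral that one bounds crudely by the same critical quantities, or one simply uses that $|u^r/r| \lesssim A$ as already recorded in the proof of Lemma~\ref{lem:basic}). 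Hence $|\frac{\ud}{\ud t}\log\Phi^r(t,x)| \le CA$ uniformly, so that for all $t \in [0,T]$ and all $x$ in the support,
\begin{equation*}
	e^{-CAt} \le \frac{\Phi^r(t,x)}{|x_h|} \le e^{CAt}.
\end{equation*}

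\textbf{Propagation of the ordering.} Now I would use the initial separation of the bubbles. By construction \eqref{eq:nonexist-bubble}, the bubble $\Omega_n$ lives at radial scale $|x_h| \simeq 8^{-n+1}$ within a window of width $\simeq 8^{-n}$, so that on $\Omega_n$ one has $|x_h| \in [8^{-n+1} - \tfrac18 8^{-n}, 8^{-n+1} + \tfrac18 8^{-n}]$, giving $\sup_{\Omega_n}|x_h| / \inf_{\Omega_n}|x_h| \le \tfrac{8 - 1/8}{8+1/8} \cdot 8 / \dots$ — more simply, the ratio is bounded by some fixed constant strictly less than $4$, say $\le 2$, and similarly $\inf_{\Omega_n}|x_h| / \sup_{\Omega_{n+1}}|x_h| \ge$ some fixed constant strictly greater than $4$, say $\ge 8$. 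Combining with the two-sided bound above,
\begin{equation*}
	\frac{\sup_{x\in\Omega_n}\Phi^r(t,x)}{\inf_{x\in\Omega_n}\Phi^r(t,x)} \le e^{2CAt}\, \frac{\sup_{\Omega_n}|x_h|}{\inf_{\Omega_n}|x_h|} \le 2 e^{2CAt},
\end{equation*}
and likewise $\inf_{\Omega_n}\Phi^r(t,x) \ge e^{-2CAt}\, 8\, \sup_{\Omega_{n+1}}|x_h| \cdot (\text{ratio}) \ge \dots \ge 4\sup_{\Omega_{n+1}}\Phi^r(t,x) \cdot (8/4) e^{-2CAt}$. Choosing $c_0$ (hence $T$, via $AT \le c_0$) small enough that $e^{2CAT} \le \sqrt{2}$, both bounds in \eqref{eq:claim} follow. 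Note that the constant $c_0$ here may be taken as the minimum of this requirement and the one from Lemma~\ref{lem:basic}.

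\textbf{Main obstacle.} The only genuine subtlety is making sure the Key Lemma is actually applicable to $\omg(t,\cdot)$ uniformly in $t$ — i.e.\ that $\omg(t,\cdot)$ satisfies the two hypotheses of Lemma~\ref{key_lem} (oddness in $x_d$ and vanishing near the degenerate sets $\{x_h = 0\}$ and $\{x_d = 0\}$). Oddness is preserved by the symmetry of the equation; vanishing near $\{x_d = 0\}$ follows from Lemma~\ref{lem:basic}, and vanishing near $\{x_h = 0\}$ follows because the flow cannot reach the axis in finite time (the same log-Lipschitz bound on $u$ prevents trajectories from crossing $r = 0$, and $\omega_0$ is supported away from the axis). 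Once applicability is in hand, the argument is the soft ODE comparison sketched above; no delicate estimate on the vortex-stretching term is needed at this stage, since we only want the crude two-sided bound $\Phi^r(t,x) \simeq |x_h|$ on a short time interval, not any sharp asymptotics.
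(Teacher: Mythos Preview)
Your argument has a genuine gap: the claim that $|u^r/r|\le CA$ is false for this data, and with it the two-sided bound $e^{-CAt}\le \Phi^r(t,x)/|x_h|\le e^{CAt}$ collapses. The Key Lemma says
\[
\frac{u^r(t,x)}{r}=\frac{1}{(d-1)|B_d|}\int_{Q(x)}\frac{|y_h|y_d}{|y|^{d+2}}\omega(t,y)\,\ud y+O(A),
\]
and the remainder is $O(A)$, but the \emph{leading integral} is not: by construction $y_d\omega(t,y)\ge0$, so the integral is nonnegative and, for $x$ near the $n$-th bubble, comparable to $\sum_{k<n}k^{-\alp}\simeq n^{1-\alp}/(1-\alp)$, which is unbounded in $n$ (this is exactly the large velocity gradient driving the ill-posedness). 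The proof of Lemma~\ref{lem:basic} only records the one-sided bound $u^r/r\ge -CA$; it never claims $u^r/r\le CA$, and indeed Lemma~\ref{lem_inv} later shows $\Phi^r(t,x)/r$ can grow like $n^{c_2}$ on $[0,T]$.

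The paper's proof avoids this by comparing \emph{two} trajectories rather than bounding one. The point is monotonicity: if $r\ge r'$ then $Q(x)\subset Q(x')$, and since the integrand is nonnegative the leading terms cancel with a sign, giving
\[
\frac{u^r(t,x)}{r}-\frac{u^r(t,x')}{r'}\le CA.
\]
This yields the generic ratio control $\tfrac12\Phi^r(t,x')\le\Phi^r(t,x)\le \tfrac{2r}{r'}\Phi^r(t,x')$ on $[0,T]$, which immediately gives the first inequality in \eqref{eq:claim}. For the second (separation of consecutive bubbles) the difference of leading terms is not zero but equals the integral over $Q(\Phi(t,x'))\setminus Q(\Phi(t,x))$, which after the first step contains at most one bubble $\Phi(t,\Omega_n)$; this contribution is $O(n^{-\alp})$ and an induction on $n$ closes. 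So the missing idea is precisely to exploit the sign/monotonicity of the leading term and work with differences of $\log\Phi^r$, not with $\log\Phi^r$ itself.
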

\begin{proof}
	We first show a generic proerty, which works even for $\omega_0$ without \eqref{eq:nonexist-data}: let $x,x' \in \bbR^{d-1}\times \bbR_{>0}$ with $r \geq r'>0$. Then, we have
	\begin{equation}\label{gen_prop}
		\begin{split} 
			\frac 12 \Phi^r(t,x') \leq \Phi^r(t,x) \leq \frac {2r}{r'} \Phi^r(t,x'), \qquad t \in [0,T].
		\end{split}
	\end{equation}
	To show it, we make a simple observation
	\begin{equation*}
		\begin{split} 
			\frac {u^r(t,x)}{r} - \frac {u^r(t,x')}{r'} \le CA, \qquad r \geq r' > 0.
		\end{split}
	\end{equation*}
	This is directly obtained from \eqref{ur_est} with $y_d \omega(y) \geq 0$. Then, we have
	\begin{equation*}
		\begin{split} 
			\frac {\ud}{\ud t} \log \Phi^r(t,x) - \frac {\ud}{\ud t} \log \Phi^r(t,x') \le CA
		\end{split}
	\end{equation*}
	when $\Phi^r(t,x) \geq \Phi^r(t,x')$. Integrating it over time and using the smallness of $c_0$, we can infer the claim. 
	
	Now, we consider $\omega_0$ with \eqref{eq:nonexist-data}. Then, we clearly have from \eqref{gen_prop} that
	\begin{equation}\label{order_est}
		\begin{split} 
			\sup_{x \in \Omega_{n'}} \Phi^r(t,x) \leq 4\inf_{x \in \Omega_n} \Phi^r(t,x), \qquad t \in [0,T]
		\end{split}
	\end{equation}
	for all $n' \geq n \geq n_0$. Thus, it remains to show that
	\begin{equation}\label{order_est2}
		\begin{split} 
			4 \sup_{x \in \Omega_{n+1}} \Phi^r(t,x) \leq \inf_{x \in \Omega_n} \Phi^r(t,x), \qquad t \in [0,T]
		\end{split}
	\end{equation}
	for all $n \geq n_0$. We prove it inductively. Let $x \in \Omega_{n_0}$ and $x' \in \Omega_{n_0+1}$ with $x_d,x_d'>0$. We have from \eqref{ur_est} that
	\begin{equation*}
		\begin{split} 
			\frac {\ud}{\ud t} \log \Phi^r(t,x') - \frac {\ud}{\ud t} \log \Phi^r(t,x) \leq \frac 1{(d-1)|B_d|} \int_{Q(\Phi(t,x')) \setminus Q(\Phi(t,x))} \frac { |y_h| y_d }{|y|^{d+2}} \omega(t,y) \,\mathrm{d}y + CA.
		\end{split}
	\end{equation*}
	We notice from \eqref{order_est} that
	\begin{equation*}
		\begin{split} 
			\int_{Q(\Phi(t,x')) \setminus Q(\Phi(t,x))} \frac { |y_h| y_d }{|y|^{d+2}} \omega(t,y) \,\mathrm{d}y \leq \int_{\Phi(t,\Omega_{n_0})} \frac { |y_h| |y_d| }{|y|^{d+2}} |\omega(t,y)| \,\mathrm{d}y .
		\end{split}
	\end{equation*}
	Here, we consider $n \geq n_0$ for a while to give an estimate uniform on $n$. By the volume preserving property of the flow, we can see
	\begin{equation}\label{In_est}
		\begin{split} 
			\int_{\Phi(t,\Omega_{n})} \frac { |y_h||y_d| }{|y|^{d+2}} |\omega(t,y)| \,\mathrm{d}y = \int_{\Phi(t,\Omega_{n})} \frac {|y_h|^{d-1} |y_d|}{|y|^{d+2}} \frac {|\omega(t,y)|}{|y_h|^{d-2}} \,\mathrm{d}y = \int_{\Omega_{n}} \frac {|\Phi^r(t,x)|^{d-1}|\Phi^d(t,x)|}{|\Phi(t,x)|^{d+2}} \frac {|\omega_0(y)|}{|y_h|^{d-2}} \,\mathrm{d}y.
		\end{split}
	\end{equation}
	Thus, we have
	\begin{equation*}
		\begin{split} 
			\int_{\Phi(t,\Omega_{n})} \frac { |y_h||y_d| }{|y|^{d+2}} |\omega(t,y)| \,\mathrm{d}y \leq \left( \sup_{x \in \Omega_{n}} \frac {r}{\Phi^r(t,x)} \right)^2 \int_{\Omega_{n}} \frac {|\omega_0(y)|}{|y_h|^{d}} \,\mathrm{d}y.
		\end{split}
	\end{equation*}
	Since \eqref{ur_est} implies
	\begin{equation*}
		\begin{split} 
			\frac {u^r(t,x)}{r} \ge -CA,
		\end{split}
	\end{equation*}
	we have
	\begin{equation*}
		\begin{split} 
			\frac {\Phi^r(t,x)}{r} \geq \exp \left( -CAt \right).
		\end{split}
	\end{equation*}
	These yield
	\begin{equation*}
		\begin{split} 
			\int_{\Phi(t,\Omega_{n})} \frac { |y_h||y_d| }{|y|^{d+2}} |\omega(t,y)| \,\mathrm{d}y \leq C n^{-\alpha} \exp \left( CAt \right), \qquad n \geq n_0.
		\end{split}
	\end{equation*}
	Applying this estimate, we deduce
	\begin{equation*}
		\begin{split} 
			\frac {\ud}{\ud t} \log \Phi^r(t,x') - \frac {\ud}{\ud t} \log \Phi^r(t,x) \leq C \exp \left( CAt \right) + CA.
		\end{split}
	\end{equation*}
	Taking $c_0>0$ small enough, we can infer
	\begin{equation*}
		\begin{split} 
			\log \Phi^r(t,x') - \log \Phi^r(t,x) \leq \frac {\exp \left( CAT \right)-1}{A} + CAT + \log r' - \log r \leq - \log 4.
		\end{split}
	\end{equation*}
	This shows \eqref{order_est2} in the case of $n=n_0$. Now, we fix $k > n_0$ and assume that \eqref{order_est2} be satisfied for all $n_0 \leq n < k$. To show \eqref{order_est2} for $n = k$, we let $x \in \Omega_{k}$ and $x' \in \Omega_{k+1}$ with $x_d,x_d'>0$. From \eqref{ur_est}, it follows
	\begin{equation*}
		\begin{split} 
			\frac {\ud}{\ud t} \log \Phi^r(t,x') - \frac {\ud}{\ud t} \log \Phi^r(t,x) \leq \frac 1{(d-1)|B_d|} \int_{Q(\Phi(t,x') \setminus Q(\Phi(t,x))} \frac { |y_h| y_d }{|y|^{d+2}} \omega(t,y) \,\mathrm{d}y + CA.
		\end{split}
	\end{equation*}
	Since \eqref{order_est2} with $n_0 \leq n < k$ are assumed, we can see with \eqref{order_est} that
	\begin{equation*}
		\begin{split} 
			\int_{Q(\Phi(t,x') \setminus Q(\Phi(t,x))} \frac { |y_h| y_d }{|y|^{d+2}} \omega(t,y) \,\mathrm{d}y \leq \int_{\Phi(t,\Omega_{k})} \frac { |y_h| |y_d| }{|y|^{d+2}} |\omega(t,y)| \,\mathrm{d}y .
		\end{split}
	\end{equation*}
	Applying the uniform estimate, we have
	\begin{equation*}
		\begin{split} 
			\frac {\ud}{\ud t} \log \Phi^r(t,x') - \frac {\ud}{\ud t} \log \Phi^r(t,x) \leq C \exp \left( CAt \right) + CA.
		\end{split}
	\end{equation*}
	Therefore, we obtain what we desired. This completes the proof.
\end{proof}

Let $x \in \Omega_k$ for some $k \geq n_0$. Then by the previous lemma, the set $Q(\Phi(t,x))$ can be replaced by the union of bubbles $\cup_{n < k} \Phi(t,\Omega_n)$ on $[0,T]$. From now on, we simply use the notation
\begin{equation*}
	\begin{split}
		I_{n}(t) := \int_{\Phi(t,\Omega_{n})} \frac { |y_h|y_d }{|y|^{d+2}} \omega(t,y) \,\mathrm{d}y.
	\end{split}
\end{equation*}

\begin{lemma}\label{lem_inv}
	Assume that $\omega$ be a solution to \eqref{eq:Euler-axisym-no-swirl} satisfying \eqref{eq:sol-bound} with initial data \eqref{eq:nonexist-data}. Then, there exists a sequence $\{ T_n \}_{n \geq n_0}$ with
	\begin{equation*}
		\begin{split}
			T_n := \min\{ T, c_1(1-\alpha)n^{-1+\alpha} \}
		\end{split}
	\end{equation*}
	for some absolute constant $c_1>0$ such that
	\begin{equation}\label{inv_est}
		\begin{split}
			\frac r2 \leq \Phi^r(t,x) \leq 2r \qquad \mbox{and} \qquad \frac {x_d}2 \leq \Phi^d(t,x) \leq 2x_d
		\end{split}
	\end{equation}
	for all $x \in \Omega_n$ with $n \geq n_0$ and $t \in [0,T_n]$.
\end{lemma}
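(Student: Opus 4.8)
The plan is to control, for each fixed $n$ and each $x \in \Omega_n$, the logarithmic derivatives $\frac{\ud}{\ud t}\log\Phi^r(t,x)$ and $\frac{\ud}{\ud t}\log\Phi^d(t,x)$ using the Key Lemma, and then integrate over a time interval short enough that the resulting exponential factors stay within $[\frac12,2]$. By the Key Lemma applied to $\omg(t,\cdot)$ (which is legitimate on $[0,T]$ by the discussion following \eqref{eq:sol-bound} and Lemma~\ref{lem:basic}), we have for $x \in \Omega_n$ that
\begin{equation*}
	\begin{split}
		\left| \frac{\ud}{\ud t}\log\Phi^r(t,x) - \frac{1}{(d-1)|B_d|} I_{<n}(t) \right| \le CA, \qquad \left| \frac{\ud}{\ud t}\log\Phi^d(t,x) + \frac{1}{|B_d|} I_{<n}(t) \right| \le C\left(1+\log\tfrac{\Phi^r}{\Phi^d}\right)A,
	\end{split}
\end{equation*}
where I write $I_{<n}(t) := \sum_{n' < n} I_{n'}(t)$, using Lemma~\ref{lem_order} to replace $Q(\Phi(t,x))$ by $\cup_{n'<n}\Phi(t,\Omega_{n'})$. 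The logarithmic factor $1+\log(\Phi^r/\Phi^d)$ is harmless: on the support of $\omg_0$ the ratio $x_d/r$ is bounded below (each bubble sits at $r \simeq 8^{-n+1}$, $x_d \simeq 8^{-n}$, so $x_d/r \simeq 1/8$), and as long as \eqref{inv_est} has not yet failed this ratio stays comparable to its initial value, hence the log factor is $O(1)$. So both logarithmic derivatives are bounded by $C|I_{<n}(t)| + CA$.

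The main estimate is therefore a bound on $|I_{<n}(t)|$. Summing the uniform bound $|I_{n'}(t)| \le C (n')^{-\alpha}\exp(CAt)$ established inside the proof of Lemma~\ref{lem_order} (equation \eqref{In_est} together with the volume-preserving change of variables and $\Phi^r(t,x)/r \ge \exp(-CAt)$) over $n' < n$ gives
\begin{equation*}
	\begin{split}
		|I_{<n}(t)| \le C\exp(CAt) \sum_{n'=n_0}^{n-1} (n')^{-\alpha} \le C\exp(CAt)\, \frac{n^{1-\alpha}}{1-\alpha},
	\end{split}
\end{equation*}
since $0 < \alpha < 3/4 < 1$ and $\sum_{n' < n}(n')^{-\alpha} \le \int_0^n s^{-\alpha}\,\ud s = \frac{n^{1-\alpha}}{1-\alpha}$. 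Combining with $AT \le c_0$, we get $\bigl|\frac{\ud}{\ud t}\log\Phi^r(t,x)\bigr| \le C\frac{n^{1-\alpha}}{1-\alpha} + CA$ and similarly for $\Phi^d$, for $t$ in the interval on which \eqref{inv_est} holds. Integrating from $0$ to $t$,
\begin{equation*}
	\begin{split}
		\left| \log\frac{\Phi^r(t,x)}{r} \right| \le \left(C\frac{n^{1-\alpha}}{1-\alpha} + CA\right) t, \qquad \left| \log\frac{\Phi^d(t,x)}{x_d} \right| \le \left(C\frac{n^{1-\alpha}}{1-\alpha} + CA\right) t.
	\end{split}
\end{equation*}
Choosing $c_1 > 0$ small enough (depending only on $d$) and setting $T_n := \min\{T, c_1(1-\alpha)n^{-1+\alpha}\}$, the right-hand side is at most $\log 2$ for all $t \in [0,T_n]$ — here one uses $c_0$ small so that the $CAt \le CAT \le Cc_0$ contribution is also $\le \frac12\log 2$. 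This is exactly \eqref{inv_est}.

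One point requiring care is the circularity: the bound on $|I_{<n}(t)|$ via \eqref{In_est} uses $\Phi^r(t,x)/r \ge \exp(-CAt)$ (which is unconditional, coming from $u^r/r \ge -CA$ in the Key Lemma), and the log-factor control uses that \eqref{inv_est} has not failed — so the argument should be run as a continuity/bootstrap argument: let $T_n^*$ be the maximal time $\le T_n$ on which \eqref{inv_est} holds, derive the strict inequalities $\frac r2 < \Phi^r < 2r$ etc. on $[0,T_n^*]$ from the integrated estimate, and conclude $T_n^* = T_n$. The main obstacle is bookkeeping the constants so that the single choice of $c_0$ (made "at the end of this section") and the choice of $c_1$ simultaneously serve Lemmas~\ref{lem:basic}, \ref{lem_order}, and this one; the analytic content — the geometric-type sum $\sum (n')^{-\alpha} \simeq n^{1-\alpha}/(1-\alpha)$ feeding into an exponential-in-time Grönwall bound — is routine.
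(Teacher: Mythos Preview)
Your proof is correct and the core analytic content matches the paper's: bound the logarithmic derivatives of $\Phi^r$ and $\Phi^d$ via the Key Lemma by $C\frac{n^{1-\alpha}}{1-\alpha}+CA$, integrate, and choose $c_1$ small. The one structural difference is that the paper argues by \emph{induction on $n$}: assuming \eqref{inv_est} already holds for all $n_0\le n'<k$, it deduces $I_{n'}(t)\simeq I_{n'}(0)\simeq (n')^{-\alpha}$ on $[0,T_{n'}]\supseteq[0,T_k]$ from the inductive hypothesis, then sums. You instead observe that the \emph{upper} bound $|I_{n'}(t)|\le C(n')^{-\alpha}\exp(CAt)$ was already derived unconditionally inside the proof of Lemma~\ref{lem_order} (it uses only volume preservation and the one-sided inequality $\Phi^r/r\ge e^{-CAt}$ from $u^r/r\ge -CA$), so no induction is needed. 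This is a genuine simplification: since Lemma~\ref{lem_inv} only requires the upper bound on $\sum_{n'<n}I_{n'}(t)$, the two-sided comparability $I_{n'}(t)\simeq I_{n'}(0)$ that the paper's induction provides is more than necessary here. Your bootstrap handling of the logarithmic factor in $B_2$ is also essentially what the paper does (it phrases it as ``assume $\Phi^r/\Phi^d\le 64$\ldots this assumption is actually obsolete by the following estimate'').
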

\begin{proof}
	We prove it inductively. Let $x \in \Omega_{n_0}$ with $x_d>0$ and assume that $\Phi^r(t,x)/\Phi^d(t,x) \leq 64$ for $t \in [0,T_{n_0}]$. We note that this assumption is actually obsolete by the following estimate. From Lemma~\ref{key_lem}, we have
	\begin{equation*}
		\begin{split}
			-CA \leq \frac {\ud}{\ud t} \log \Phi^r(t,x) \leq CA, \qquad -CA \leq \frac {\ud}{\ud t} \log \Phi^d(t,x) \leq CA
		\end{split}
	\end{equation*}
	for all $t \in [0,T]$. Taking $c_0$ small enough to satisfy $CAT \leq Cc_0 \leq \log 2$, we can see that the trajectory $\Phi(t,x)$ cannot attain $\Phi^r(t,x)/\Phi^d(t,x) \geq 64$ at any time $t \in [0,T_{n_0}]$. Thus, we obtain \eqref{inv_est} with $n=n_0$. Now we let $k > n_0$ and suppose \eqref{inv_est} be true for all $n_0 \leq n < k$. By Lemma~\ref{key_lem}, we have
	\begin{equation*}
		\begin{split}
			\left| \frac {\ud}{\ud t} \log \Phi^r(t,x) - \frac 1{(d-1)|B_d|}\sum_{n<k} I_n(t) \right| \leq CA
		\end{split}
	\end{equation*}
	and
	\begin{equation*}
		\begin{split}
			\left| \frac {\ud}{\ud t} \log \Phi^d(t,x) + \frac 1{|B_d|}\sum_{n<k} I_n(t) \right| \leq CA.
		\end{split}
	\end{equation*}
	By the assumption, we clearly have
	\begin{equation*}
		\begin{split}
			I_n(t) \simeq I_n(0) = Cn^{-\alpha}, \qquad t \in [0,T_n]
		\end{split}
	\end{equation*}
	for all $n_0 \leq n < k$. Using it with $T_k \leq T_n$, we have
	\begin{equation*}
		\begin{split}
			- CA \leq \frac {\ud}{\ud t} \log \Phi^r(t,x) \leq C\frac {k^{1-\alpha}}{1-\alpha} + CA
		\end{split}
	\end{equation*}
	and
	\begin{equation*}
		\begin{split}
			- C\frac {k^{1-\alpha}}{1-\alpha} - CA \leq \frac {\ud}{\ud t} \log \Phi^r(t,x) \leq CA
		\end{split}
	\end{equation*}
	for all $t \in [0,T_k]$. By the definition of $T_k$, we can see
	\begin{equation}\label{eq:c1}
		\begin{split}
			\left( C\frac {k^{1-\alpha}}{1-\alpha} + CA \right)T_k \leq C\frac {k^{1-\alpha}}{1-\alpha} \left(c_1(1-\alpha)k^{-1+\alpha} \right) + CAT \leq \log 2
		\end{split}
	\end{equation}
	for some constant $c_1>0$. Thus, we can deduce the claim. This completes the proof.
\end{proof}

We recall \eqref{In_est}
\begin{equation*}
	\begin{split} 
		I_n(t) = \int_{\Omega_{n}} \frac {|\Phi^r(t,x)|^{d-1}|\Phi^d(t,x)|}{|\Phi(t,x)|^{d+2}} \frac {|\omega_0(y)|}{|y_h|^{d-2}} \,\mathrm{d}y
	\end{split}
\end{equation*}
and let $\ell = \ell(T) \geq n_0$ satisfy $c_1(1-\alpha)\ell^{-1+\alpha}\leq T$. Then by the above lemma, we can observe that
\begin{equation*}
	\begin{split}
		\int_{0}^{T_{n}} I_{n}(t) \,\ud t \gtrsim T_{n}I_{n}(0) \gtrsim \frac{1}{n}, \qquad n \geq \ell.
	\end{split}
\end{equation*} Hence, we can find an absolute constant $c_2>0$ and $\ell \geq n_0$ such that \begin{equation}\label{eq:c0}
	\begin{split}
		\sum_{ k = \sqrt{n}}^{n-1} \int_{0}^{T_{k}} I_{k}(t) \,\ud t \ge c \left( \frac{1}{\sqrt{n}} + \cdots + \frac{1}{n-1} \right) \ge \log n^{c_2}, \qquad \sqrt{n} \geq \ell.
	\end{split}
\end{equation}
For the sake of convenience, we let $c_2 < 1/4$.

\section{$L^{\infty}$ illposedness}\label{sec:Linfty}
In this section, we prove Theorem~\ref{thm:Lorenz} in the case $d=3$. Adapting the argument to $d>3$ is straightforward. Here, we consider the $C^{\infty}$--smooth function $\omega_0$ given by \eqref{eq:nonexist-data} with some $m < \infty$. From a direct computation, we can verify that 
\begin{equation}\label{Lorentz_est}
	\begin{split}
		\left\| \frac {\omega_0}r \right\|_{L^{3,q}(\bbR^3)}^q \leq C \sum_{n = n_0}^m n^{-\alpha q} , \qquad q \geq 1.
	\end{split}
\end{equation} Therefore, if $q>1/\alp$, $r^{-1}\omg_0$ is bounded in $L^{3,q}$ uniformly in $m$. 
According to \cite{Danchin}, we have a unique global in time solution to \eqref{eq:Euler-axisym-no-swirl} 
\begin{equation}\label{danchin_class}
	\begin{split}
		\omega \in L^{\infty}_{loc}([0,\infty);L^1 \cap L^{\infty}(\bbR^3)), 
	\end{split}
\end{equation}
with
\begin{equation*}
	\begin{split}
		\Big\| \frac {\omega(t)}r \Big\|_{L^{3,1}(\bbR^3)} = \left\| \frac {\omega_0}r \right\|_{L^{3,1}(\bbR^3)} < \infty, \qquad t>0.
	\end{split}
\end{equation*}
We now state and prove the following proposition, which is Theorem \ref{thm:inf} in the case $d = 3$. 

\begin{proposition}\label{prop:illposed}
	We consider the sequence of $C^\infty$--smooth initial data $\omega_{0}^{(m)}$ given by \eqref{eq:nonexist-data}, where $n_0 < m < \infty$ and $0<\alpha<c_2$, with $c_2>0$ being the constant from \eqref{eq:c0}. Given any $\varepsilon>0$ and $q>1/\alp$, by taking $n_0$ large, we have uniform bounds \begin{equation*}
		\begin{split}
			\nrm{\omg_0^{(m)}}_{L^1 \cap L^\infty(\bbR^3)} \le \varepsilon, \qquad \nrm{r^{-1}\omg_0^{(m)}}_{L^{3,q}(\bbR^3)} \le \varepsilon
		\end{split}
	\end{equation*} for all $m\ge n_{0}$.
	Then, there exists $M=M(n_0) >0$ such that for all $m > M$, the unique global in time solution $\omega^{(m)}$ with initial data $\omg^{(m)}_0$ satisfies
	\begin{equation*}
		\begin{split}
			\sup_{t \in [0,T(m)]} \| \omega^{(m)}(t,\cdot) \|_{L^{\infty}(\bbR^3)} \geq \frac 14 m^{c_2 - \alpha}
		\end{split}
	\end{equation*}
	for $T(m) := c_1(1-\alpha) m^{\frac 12(-1+\alpha)}$, where $c_1$ is from \eqref{eq:c1}.
\end{proposition}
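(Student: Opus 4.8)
The plan is to argue by contradiction, reducing the claimed $L^\infty$ inflation to the cumulative strain estimate \eqref{eq:c0} already established in \S\ref{sec_std}. First I would dispose of the initial-data bounds. The bubbles $\omega^{(n)}_{0,loc}$ have pairwise disjoint supports, so $\|\omega^{(m)}_0\|_{L^\infty}\le n_0^{-\alpha}$; since $|\Omega_n|\simeq 8^{-3n}$ one gets $\|\omega^{(m)}_0\|_{L^1}\lesssim\sum_{n\ge n_0}n^{-\alpha}8^{-3n}$; and \eqref{Lorentz_est} gives $\|r^{-1}\omega^{(m)}_0\|_{L^{3,q}}^q\le C\sum_{n\ge n_0}n^{-\alpha q}$, which converges for $q>1/\alpha$. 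All three fall below $\varepsilon$ once $n_0=n_0(\varepsilon,q)$ is large, uniformly in $m$. Then \cite{Danchin} furnishes a unique global solution $\omega^{(m)}$ in the class \eqref{danchin_class}; its $L^\infty$ norm is locally bounded in time, so the flow $\Phi$ is well defined and the Cauchy formula \eqref{eq:cauchy} holds, reading $\omega^{(m)}(t,\Phi(t,x))=(\Phi^r(t,x)/r)\,\omega^{(m)}_0(x)$ in $d=3$.

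Next I would set $A_1:=\sup_{t\in[0,T(m)]}\|\omega^{(m)}(t)\|_{L^\infty(\bbR^3)}$ (finite by \eqref{danchin_class}) and suppose, for contradiction, that $A_1<\tfrac14 m^{c_2-\alpha}$. Then $A_1 T(m)\le\tfrac{c_1(1-\alpha)}{4}\,m^{\,c_2-\frac12-\frac{\alpha}{2}}$, and since $c_2<1/4$ this exponent is strictly negative, so $A_1 T(m)\le c_0$ for all $m\ge M(n_0)$. Taking $T=T(m)$ and $A=A_1$ in \eqref{eq:sol-bound}, the entire apparatus of \S\ref{sec_std}---Lemmas~\ref{lem:basic}, \ref{lem_order}, \ref{lem_inv}, and above all the strain bound \eqref{eq:c0}---is available. (One checks $\ell(T(m))=\lceil\sqrt m\rceil$, so \eqref{eq:c0} applies once $m$ is large.)

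The heart of the matter is to follow the single Lagrangian trajectory starting in the outermost bubble. Fix $x^*\in\Omega_m$ with $x^*_d>0$ in the core of the $m$-th bubble, so $\omega^{(m)}_0(x^*)=m^{-\alpha}$ (e.g.\ $(r^*,x^*_d)=(8^{-m+1},8^{-m})$); by \eqref{eq:cauchy} it then suffices to show $\Phi^r(T(m),x^*)/r^*\ge\tfrac12 m^{c_2}$, for then $|\omega^{(m)}(T(m),\Phi(T(m),x^*))|=\tfrac{\Phi^r(T(m),x^*)}{r^*}m^{-\alpha}\ge\tfrac12 m^{c_2-\alpha}$. Since $x^*\in\Omega_m$, Lemma~\ref{lem_order} identifies $Q(\Phi(t,x^*))$ with $\bigcup_{n<m}\Phi(t,\Omega_n)$, Lemma~\ref{lem:basic} keeps $\Phi(t,x^*)$ in the region $r\ge x_d>0$ where Lemma~\ref{key_lem} applies, and $y_d\,\omega(t,y)\ge0$ on the support (the flow preserves $\{x_d>0\}$ and the sign of $\omega_0$), so each $I_n(t)\ge0$. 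Inserting \eqref{ur_est}---whose remainder is $\le C\|\omega(t)\|_{L^\infty}\le CA_1$---into $\tfrac{\ud}{\ud t}\log\Phi^r(t,x^*)=u^r(t,\Phi(t,x^*))/\Phi^r(t,x^*)$, discarding all bubbles except $\sqrt m\le k<m$, integrating over $[0,T(m)]$, using $T_k\le T(m)$ on that range (so $\int_0^{T(m)}I_k\ge\int_0^{T_k}I_k$) together with \eqref{eq:c0}, and absorbing the error $CA_1T(m)=o(1)$, I would obtain $\Phi^r(T(m),x^*)/r^*\ge\tfrac12 m^{c_2}$ for $m\ge M(n_0)$. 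Hence $A_1\ge|\omega^{(m)}(T(m),\Phi(T(m),x^*))|\ge\tfrac12 m^{c_2-\alpha}$, contradicting $A_1<\tfrac14 m^{c_2-\alpha}$; this proves the proposition.

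The genuinely hard part is not in this argument but already encapsulated in \eqref{eq:c0}, whose proof rests on the stability statement Lemma~\ref{lem_inv}: one must ensure that the straining field $u^r/r$ felt at the $m$-th bubble stays comparable to $I_k(0)\simeq k^{-\alpha}$ throughout the window $[0,T_k]$ for every inner bubble $k<m$, so that the accumulated strain totals $\simeq\sum_k T_k I_k(0)\simeq\sum_k k^{-1}\simeq\log m$; this is exactly where the competition between the squeezing of each bubble (weakening the induced velocity gradient) and vortex stretching (strengthening it) has to be controlled. Granting \eqref{eq:c0}, the present proof is bookkeeping: the contradiction hypothesis is precisely what delivers the a priori smallness $A_1 T(m)\le c_0$ needed to invoke \S\ref{sec_std}, while the Cauchy formula \eqref{eq:cauchy} is what converts flow amplification into $L^\infty$ growth of the vorticity.
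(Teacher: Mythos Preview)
Your argument is correct and follows essentially the same route as the paper's own proof: assume the $L^\infty$ norm stays below $\tfrac14 m^{c_2-\alpha}$ on $[0,T(m)]$, verify $AT(m)\le c_0$ from the negative exponent $c_2-\tfrac12-\tfrac\alpha2$, invoke the machinery of \S\ref{sec_std} (in particular \eqref{eq:c0} with $\ell=\sqrt m$), and use \eqref{ur_est} together with the Cauchy formula to produce a point in $\Phi(T(m),\Omega_m)$ where $|\omega|\ge\tfrac12 m^{c_2-\alpha}$. The only slip is terminological: $\Omega_m$ is the \emph{innermost} bubble (closest to the origin), not the outermost.
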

\begin{proof} In the proof, we drop the superscript $(m)$ for simplicity. 
	We set
	\begin{equation*}
		\begin{split}
			A(m) = \frac 14 m^{c_2 - \alpha}
		\end{split}
	\end{equation*}
	and observe that
	\begin{equation*}
		\begin{split}
			AT \leq C m^{c_2-\alpha} m^{\frac 12(-1+\alpha)} = C m^{c_2 - \frac 12 - \frac 12\alpha}.
		\end{split}
	\end{equation*}
	Since we have $c_2 - 1/2 - \alpha/2 < 0$, we can find $M>n_0^2$ so that $AT\leq c_0$ for all $m>M$. We fix $M$ to satisfy the above conditions and assume that there exists $m>M$ with
	\begin{equation*}
		\begin{split}
			\sup_{t \in [0,T]}\| \omega(t) \|_{L^{\infty} (\bbR^3)} \leq A.
		\end{split}
	\end{equation*}
	Since $\omega$ satisfies all conditions for \S \ref{sec_std}, we obtain Lemma~\ref{lem:basic}, Lemma~\ref{lem_order}, and Lemma~\ref{lem_inv} with the constants $c_1$ and $c_2$, regardless of the choice of $m$. We notice that $T_{\sqrt{m}} = T$. Thus, we can have from \eqref{eq:c0} with $n=m=\ell^2$ that
	\begin{equation*}
		\begin{split}
			\sum_{ k = \sqrt{m}}^{m-1} \int_{0}^{T_{k}} I_{k}(t) \,\ud t \ge \log m^{c_2}.
		\end{split}
	\end{equation*}
	Let $x \in \Omega_m$. By the use of \eqref{ur_est}, we infer that
	\begin{equation*}
		\begin{split}
			\log \frac {\Phi^r(T,x)}{r} \ge \frac 1{(d-1)|B_d|} \sum_{k<m} \int_0^{T} I_k(\tau)\,\ud \tau - CAT \ge \log m^{c_2} - CAT.
		\end{split}
	\end{equation*}
	Hence, it follows
	\begin{equation*}
		\begin{split}
			\inf_{x\in\Omega_m} \frac {|\Phi^r(T,x)|}{r} \geq \frac 12 m^{c_2}.
		\end{split}
	\end{equation*}
	Since we clearly have
	\begin{equation*}
		\begin{split}
			\| \omega(T) \|_{L^{\infty}(\bbR^3)} \geq \| \omega_{0} \|_{L^{\infty}(\Omega_m)} \inf_{x\in\Omega_m} \frac {|\Phi^r(T,x)|}{r} = m^{-\alpha} \inf_{x\in\Omega_m} \frac {|\Phi^r(T,x)|}{r},
		\end{split}
	\end{equation*}
	it follows
	\begin{equation*}
		\begin{split}
			\| \omega(T) \|_{L^{\infty}(\bbR^3)} \geq \frac 12 m^{c_2-\alpha}.
		\end{split}
	\end{equation*}
	This makes a contradiction and we complete the proof.
\end{proof}

\section{Nonexistence}\label{non_exist}\label{sec:nonexistence}
In this section, to prove Theorem~\ref{thm:nonexist}, we consider $\omega_{0}$ satisfying \eqref{eq:nonexist-data} with $m=\infty$ and $1/2 < \alpha < 3/4$. We notify that $\alpha$ will be specified later. We first show a proposition about the initial data.
\begin{proposition}\label{prop_initial}
	Let $\omega_0$ satisfy \eqref{eq:nonexist-data} with $m = \infty$ and $1/2 < \alpha < 3/4$. Then, we have
	\begin{equation*}
		\begin{split}
			\| \omega_{0} \|_{H^{\frac d2}\cap L^{\infty}(\bbR^d)} \to 0 \qquad \mbox{as} \qquad n_0 \to \infty.
		\end{split}
	\end{equation*}
\end{proposition}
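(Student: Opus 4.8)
The plan is to estimate each of the two norms, $\|\omega_0\|_{L^\infty}$ and $\|\omega_0\|_{H^{d/2}}$, separately, exploiting the explicit structure of $\omega_0 = \sum_{n=n_0}^\infty n^{-\alpha}\omega^{(n)}_{0,loc}$ from \eqref{eq:nonexist-data}--\eqref{eq:nonexist-bubble}. The key geometric fact is that the $n$-th bubble $\omega^{(n)}_{0,loc}$ is supported in a set of the form $\{|r - 8^{-n+1}| \lesssim 8^{-n},\ |x_d| \lesssim 8^{-n}\}$, a region of diameter $\simeq 8^{-n}$ located at distance $\simeq 8^{-n}$ from the axis; crucially, these supports are pairwise disjoint for distinct $n$ (thanks to the geometric spacing $8^{-n}$), so the sum behaves like an $\ell^p$-sum of the individual pieces in any reasonable norm.

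For the $L^\infty$ bound this is immediate: by disjointness of supports, $\|\omega_0\|_{L^\infty} = \sup_{n \ge n_0} n^{-\alpha}\|\omega^{(n)}_{0,loc}\|_{L^\infty} = \sup_{n\ge n_0} n^{-\alpha}\|\phi\|_{L^\infty} \lesssim n_0^{-\alpha} \to 0$. For the $H^{d/2}$ bound, I would first reduce to a homogeneous estimate: since all bubbles are supported in the unit ball, $\|\omega_0\|_{H^{d/2}} \simeq \|\omega_0\|_{L^2} + \|\Lambda^{d/2}\omega_0\|_{L^2}$, and the $L^2$ part is negligible (each bubble has $L^2$ norm $\simeq 8^{-nd/2}$, so the sum is tiny). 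For the homogeneous part, the natural approach is a Littlewood--Paley / scaling argument: the function $\omega^{(n)}_{0,loc}$, being a fixed profile $\phi$ rescaled by $8^n$ (i.e. at frequency scale $\simeq 8^n$ and amplitude $O(1)$, supported on a set of measure $\simeq 8^{-nd}$), satisfies $\|\Lambda^{d/2}\omega^{(n)}_{0,loc}\|_{L^2} \simeq 8^{nd/2}\cdot 8^{-nd/2} = 1$ by the scaling of the $\dot H^{d/2}$ norm in $\bbR^d$ — this is exactly why $d/2$ is the relevant exponent and why the normalization $n^{-\alpha}$ was chosen. Because distinct bubbles live at well-separated frequency scales (dyadic scale $8^n$) and well-separated spatial locations, the Littlewood--Paley pieces essentially do not interact, giving $\|\Lambda^{d/2}\omega_0\|_{L^2}^2 \lesssim \sum_{n\ge n_0} n^{-2\alpha}\|\Lambda^{d/2}\omega^{(n)}_{0,loc}\|_{L^2}^2 \lesssim \sum_{n \ge n_0} n^{-2\alpha}$, which converges precisely because $\alpha > 1/2$, and tends to $0$ as $n_0 \to \infty$.

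The main obstacle is making the "almost-orthogonality across bubbles" rigorous for the nonlocal operator $\Lambda^{d/2}$: unlike $L^2$ or integer-order Sobolev norms, $\Lambda^{d/2}$ does not localize, so $\Lambda^{d/2}\omega^{(n)}_{0,loc}$ has a tail outside $\Omega_n$. I would control this by the standard device of pairing frequency localization with spatial separation: apply a Littlewood--Paley decomposition $\omega_0 = \sum_j P_j \omega_0$, note that $P_j(n^{-\alpha}\omega^{(n)}_{0,loc})$ is, up to rapidly decaying tails, concentrated at $j \simeq \log_2 8^n = 3n$, and use the geometric (hence exponentially separated) relation between $n$ and $j$ together with the fast decay of the kernel of $P_j$ acting on a bump of size $8^{-n}$ centered at distance $\ge 8^{-n'}$ to sum the cross terms via Schur's test. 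Alternatively — and perhaps more cleanly — one can invoke the Gagliardo--Nirenberg-type characterization already flagged in the Remark after Lemma~\ref{key_lem}: it suffices to bound $\|\nabla\omega_0\|_{L^d}$ and lower-order terms, and $\|\nabla\omega^{(n)}_{0,loc}\|_{L^d} \simeq 8^n \cdot (8^{-nd})^{1/d} = 1$ by direct computation, so $\|\nabla\omega_0\|_{L^d} \lesssim (\sum n^{-\alpha d})^{1/d}$ by disjoint supports, again $\to 0$; combined with a direct check of the remaining lower-order Sobolev norms this sidesteps the nonlocal difficulty entirely. Either way the convergence condition is $\alpha > 1/2$, consistent with the hypothesis.
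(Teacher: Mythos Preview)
Your treatment of the $L^\infty$ and $L^2$ parts is fine and matches the paper. The issue is with the $\dot H^{d/2}$ estimate, where you offer two routes.

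Your ``cleaner'' alternative via $\|\nabla\omega_0\|_{L^d}$ does not work: the Gagliardo--Nirenberg inequality in the Remark after Lemma~\ref{key_lem} reads $\|\nabla\omega\|_{L^d}\le C\|\Lambda^{d/2}\omega\|_{L^2}$, which is the \emph{wrong direction} for your purpose. You need an upper bound on $\|\Lambda^{d/2}\omega_0\|_{L^2}$, and there is no general inequality $\|\Lambda^{d/2}f\|_{L^2}\lesssim\|\nabla f\|_{L^d}+\text{(lower order)}$; this would amount to $L^d(\bbR^d)\hookrightarrow\dot H^{d/2-1}(\bbR^d)$, which is false (the Sobolev embedding only goes the other way). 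So this branch should be discarded. Note also that the convergence condition for $\sum n^{-\alpha d}$ is $\alpha>1/d$, not $\alpha>1/2$, which already signals that this quantity is not capturing the critical $\dot H^{d/2}$ scaling.

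Your primary Littlewood--Paley approach is viable in principle, but the claim that the bubbles ``live at well-separated frequency scales'' needs care: $\widehat{\omega^{(n)}_{0,loc}}$ is not frequency-localized near $|\xi|\sim 8^n$ but rather spread over $|\xi|\lesssim 8^n$, so the cross terms are not negligible by frequency separation alone and the Schur-type summation must actually be carried out. The paper sidesteps this entirely with a slicker device: it splits $\|\Lambda^{d/2}\omega_0\|_{L^2}^2\simeq\|\Lambda_h^{d/2}\omega_0\|_{L^2}^2+\|\Lambda_d^{d/2}\omega_0\|_{L^2}^2$ into horizontal and vertical fractional derivatives. Since the bubbles have pairwise disjoint $x_h$-supports, the partial Fourier transform in $x_d$ keeps them disjoint, giving \emph{exact} orthogonality $\|\Lambda_d^{d/2}\omega_0\|_{L^2}^2=\sum_n n^{-2\alpha}\|\Lambda_d^{d/2}\omega^{(n)}_{0,loc}\|_{L^2}^2$ with no tails to control (and symmetrically for $\Lambda_h^{d/2}$, using disjoint $x_d$-supports). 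Each summand is then bounded by $O(1)$ via the one-dimensional interpolation $\|\Lambda_d^{d/2}g\|_{L^2}^2\le C\|\partial_d^d g\|_{L^2}\|g\|_{L^2}$, and $\alpha>1/2$ finishes the job. This is both shorter and avoids the almost-orthogonality bookkeeping your LP route would require.
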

{\begin{remark}
	This is not simply obtained by the triangle inequality, and we need to use the fact that the ``bubbles'' of $\omg_0$ are orthogonal in $H^{\frac d2}$. Moreover, by the same method, we can prove that the vector field $\omg_{0} \frac{x_h}{|x_h|}$ belongs to $H^{\frac d 2}$ with $\nrm{  \omg_{0} \frac{x_h}{|x_h|} }_{H^{\frac d2}} \to 0$ as $n_{0} \to \infty$. 
	In particular, this implies that the corresponding velocity $\bfu_0$ in $\bbR^d$ belongs to $H^{\frac d 2 + 1}$ and $\nrm{ \bfu_0}_{H^{\frac d2 + 1 }} \to 0$ as $n_{0} \to \infty$. To see this, we recall that the transform $ \omg_{0} \frac{x_h}{|x_h|}  \mapsto \nb\bfu_0 $ is given by a singular integral operator satisfying the $L^2$-bound, which gives \begin{equation*}
		\begin{split}
			\nrm{ \bfu_0}_{\dot{H}^{\frac d2 + 1 }} \le C \nrm{ \omg_{0} \frac{x_h}{|x_h|}  }_{H^{\frac d2}}
		\end{split}
	\end{equation*} for some absolute constant $C>0$. On the other hand, $L^2$-smallness follows from \begin{equation*}
	\begin{split}
		|\bfu_0(x)| \le C_d \int_{ \bbR^d } \frac{1}{|x-y|^{d-1}} |\omg_0(y)|\, \ud y, 
	\end{split}
\end{equation*} which gives with some $0<a=a(d)<1$ \begin{equation*}
\begin{split}
	\nrm{\bfu_0}_{L^{2}} \le C_d  \nrm{\omg_0}_{L^{\infty}}^{a}  \nrm{\omg_0}_{L^{1}}^{1-a} \le C_d \nrm{\omg_0}_{L^{\infty}}
\end{split}
\end{equation*} after using that $\nrm{\omg_0}_{L^1}\le C \nrm{\omg_0}_{L^\infty}$ holds uniformly in $n_0$. 
\end{remark}}
\begin{proof}
	As we clearly see $\| \omega_{0} \|_{L^2\cap L^{\infty}(\bbR^d)} \to 0$  as $n_0 \to \infty$,
 it suffices to show that
	\begin{equation}\label{initial_est}
		\begin{split}
			\| \Lambda^{\frac d2} \omega_{0} \|_{L^2(\bbR^d)} \to 0 \qquad \mbox{as} \qquad n_0 \to \infty.
		\end{split}
	\end{equation}
	We define operators $\Lambda_h$ and $\Lambda_d$ by
	\begin{equation*}
		\begin{split}
			\Lambda_h f := \mathscr{F}^{-1} |\xi_h| \mathscr{F} f, \qquad \Lambda_d f := \mathscr{F}^{-1}|\xi_d| \mathscr{F} f,
		\end{split}
	\end{equation*}
	for $f \in \mathscr{S}(\bbR^d)$. Using the notation
	\begin{equation*}
		\begin{split}
			\mathscr{F}_h f(\xi_h,x_d) := \int_{\bbR^{d-1}} e^{-ix_h \cdot \xi_h} f(x) \,\mathrm{d}x_h, \qquad \mathscr{F}_d f(x_h,\xi_d) := \int_{\bbR} e^{-ix_d \xi_d} f(x) \,\mathrm{d}x_d,
		\end{split}
	\end{equation*}
	we can verify that $\Lambda_h f = \mathscr{F}_h^{-1} |\xi_h| \mathscr{F}_h f$ and $\Lambda_d f := \mathscr{F}_d^{-1}|\xi_d| \mathscr{F}_d f.$
	We shall first show
	\begin{equation}\label{initial_d_est}
		\begin{split}
			\| \Lambda_d^{\frac d2} \omega_0 \|_{L^2(\bbR^d)}^2 \to 0 \qquad \mbox{as} \qquad n_0 \to \infty.
		\end{split}
	\end{equation}
	By the Plancherel theorem, we have
	\begin{equation*}
		\begin{split}
			\| \Lambda_d^{\frac d2} \omega_0 \|_{L^2(\bbR^d)}^2 = \left\| |\xi_d|^{\frac d2} \mathscr{F}_d \left( \sum_{n=n_0}^{\infty}  n^{-\alp} \omega^{(n)}_{0, loc} \right) \right\|_{L^2(\bbR^d)}^2.
		\end{split}
	\end{equation*}
	For each $\bar{x}_h \in \bbR^{d-1}$, we can find an unique natural number $k \geq n_0$ with
	\begin{equation*}
		\begin{split}
			\sum_{n=n_0}^{\infty}  n^{-\alp} \omega^{(n)}_{0, loc}(\bar{x}_h,x_d) = k^{-\alp} \omega^{(k)}_{0, loc}(\bar{x}_h,x_d).
		\end{split}
	\end{equation*}
	This implies
	\begin{equation*}
		\begin{split}
			\left\| |\xi_d|^{\frac d2} \mathscr{F}_d \left( \sum_{n=n_0}^{\infty}  n^{-\alp} \omega^{(n)}_{0, loc} \right) \right\|_{L^2(\bbR^d)}^2 = \sum_{n=n_0}^{\infty} \left\| |\xi_d|^{\frac d2} n^{-\alp} \mathscr{F}_d \omega^{(n)}_{0, loc} \right\|_{L^2(\bbR^d)}^2 = \sum_{n=n_0}^{\infty} n^{-2\alpha} \Big\| \Lambda_d^{\frac d2} \omega^{(n)}_{0, loc} \Big\|_{L^2(\bbR^d)}^2.
		\end{split}
	\end{equation*}
	Using Gagliardo-Nirenberg interpolation inequality, we can have
	\begin{equation*}
		\begin{split}
			\Big\| \Lambda_d^{\frac d2} \omega^{(n)}_{0, loc} \Big\|_{L^2(\bbR^d)}^2 \leq C \left\| \partial_d^d \omega^{(n)}_{0, loc} \right\|_{L^2(\bbR^d)} \left\| \omega^{(n)}_{0, loc} \right\|_{L^2(\bbR^d)} \leq C
		\end{split}
	\end{equation*}
	for all $n \geq n_0$. Thus, we obtain 
	\begin{equation*}
		\begin{split}
			\sum_{n=n_0}^{\infty} n^{-2\alpha} \Big\| \Lambda_d^{\frac d2} \omega^{(n)}_{0, loc} \Big\|_{L^2(\bbR^d)}^2 \leq C n_0^{1-2\alpha}.
		\end{split}
	\end{equation*}
	By the condition $\alpha>1/2$, \eqref{initial_d_est} follows. On the other hand, for each $\bar{x}_d \in \bbR$, we have
	\begin{equation*}
		\begin{split}
			\sum_{n=n_0}^{\infty}  n^{-\alp} \omega^{(n)}_{0, loc}(x_h,\bar{x}_d) = k^{-\alp} \omega^{(k)}_{0, loc}(x_h,\bar{x}_d)
		\end{split}
	\end{equation*}
	for some $k \geq n_0$. Repeating the above procedure with this property, we can similarly show that
	\begin{equation*}
		\begin{split}
			\| \Lambda_h^{\frac d2} \omega_0 \|_{L^2(\bbR^d)}^2 \to 0 \qquad \mbox{as} \qquad n_0 \to \infty.
		\end{split}
	\end{equation*}
	Combining the above estimates with
	\begin{equation*}
		\begin{split}
			\| \Lambda^{\frac d2} \omega_0 \|_{L^2(\bbR^d)}^2 \simeq \| \Lambda_h^{\frac d2} \omega_0 \|_{L^2(\bbR^d)}^2 + \| \Lambda_d^{\frac d2} \omega_0 \|_{L^2(\bbR^d)}^2,
		\end{split}
	\end{equation*}
	we deduce \eqref{initial_est}. This completes the proof.
\end{proof}

Now, we let $A>0$ and $T>0$ satisfy $AT \leq c_0$ and assume that there exists a solution to \eqref{eq:Euler-axisym-no-swirl} satisfying 
\begin{equation*}
	\begin{split}
		\omega \in L^{\infty}([0,T];H^{\frac d2}(\bbR^d)), \qquad 		\sup_{t \in [0,T]} \| \Lambda^{\frac d2} \omega(t) \|_{L^2 (\bbR^d)} \leq A.
	\end{split}
\end{equation*}
Then, following \S \ref{sec_std}, we obtain the statements of Lemmas~\ref{lem:basic}, \ref{lem_order}, and \ref{lem_inv} with the same constants $c_1$ and $c_2$. We may write the solution in the form
\begin{equation*}
	\begin{split}
		\omega(t,x) = \sum_{n=n_0}^{\infty} n^{-\alpha} \omega^{(n)}_{loc}(t,x), \qquad \omega^{(n)}_{loc}(t,\Phi(t,x)) = \omega_{0,loc}^{(n)}(t,x).
	\end{split}
\end{equation*}
Then, we can observe from \eqref{eq:claim} that the set $D_n(t)$ defined by
\begin{equation*}
	\begin{split}
		D_n(t) := \{ x_h \in \bbR^{d-1} ; \omega^{(n)}_{loc}(t,x) \neq 0 \mbox{ for some } x_d \in \bbR \}
	\end{split}
\end{equation*}
is disjoint from each other on the time interval $[0,T]$. Thus, we have that
\begin{equation*}
	\begin{split}
		\{ x_h \in \bbR^{d-1} ; \Lambda_d^{\frac d2} \omega^{(n)}_{loc}(t,x) \neq 0 \mbox{ for some } x_d \in \bbR \} = D_n(t),
	\end{split}
\end{equation*}
directly obtained by performing the Fourier transform with respect to $x_d$. Using this property, we show a friendly lower bound of $\| \Lambda^{\frac d2} \omega(t) \|_{L^2(\bbR^d)}$.
\begin{proposition}
	Assume that $\omega$ be a solution to \eqref{eq:Euler-axisym-no-swirl} satisfying \eqref{eq:sol-bound} with initial data \eqref{eq:nonexist-data}. Then, we have an absolute constant $c>0$ such that
	\begin{equation}\label{lb_est}
		\begin{split}
			\int_{\bbR^d} |\Lambda^{\frac d2} \omega(t)|^2 \,\mathrm{d}x \geq c \sum_{n=n_0}^{\infty} \left( \inf_{x\in\Omega_n} \frac {|\Phi^r(t,x)|}{r} \right)^{2(d-2)} \left( \inf_{x\in\Omega_n} \frac {|x_d|}{|\Phi^d(t,x)|} \right)^d n^{-2\alpha}.
		\end{split}
	\end{equation}
	for all $t \in [0,T]$.
\end{proposition}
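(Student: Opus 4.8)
The plan is to reduce the statement to a one–dimensional Poincaré–type inequality in the $x_d$–variable, using that the bubbles keep pairwise disjoint horizontal supports.

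\emph{Step 1: reduction to $\Lambda_d$ and orthogonalization.} Since $|\xi|^{\frac d2}\aeq|\xi_h|^{\frac d2}+|\xi_d|^{\frac d2}$ one has $\int_{\bbR^d}|\Lambda^{\frac d2}\omega(t)|^2\,\ud x\ageq\int_{\bbR^d}|\Lambda_d^{\frac d2}\omega(t)|^2\,\ud x$, with $\Lambda_d$ as in \S\ref{sec:nonexistence}. By \eqref{eq:claim} the sets $D_n(t)$ are pairwise disjoint, so for a.e.\ $x_h$ at most one term of $\omega(t,x_h,\cdot)=\sum_n n^{-\alpha}\omega^{(n)}_{loc}(t,x_h,\cdot)$ survives; regarding $\Lambda_d^{\frac d2}$ as the one–dimensional fractional Laplacian acting on $x_d$ (legitimate via Fubini since $|\xi_d|\le|\xi|$ gives $\Lambda_d^{\frac d2}\omega(t)\in L^2$) this yields
\begin{equation*}
\int_{\bbR^d}|\Lambda_d^{\frac d2}\omega(t)|^2\,\ud x=\sum_{n\ge n_0}n^{-2\alpha}\int_{\bbR^d}|\Lambda_d^{\frac d2}\omega^{(n)}_{loc}(t)|^2\,\ud x,
\end{equation*}
and for a.e.\ $x_h\in D_n(t)$ the slice $\omega^{(n)}_{loc}(t,x_h,\cdot)$ lies in $H^{\frac d2}(\bbR)$.

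\emph{Step 2: a one–dimensional Poincaré inequality.} I will use the following fact: there is $c_d>0$ such that every $g\in H^{\frac d2}(\bbR)$ supported in an interval of length $\rho$ satisfies $\nrm{\Lambda^{\frac d2}g}_{L^2(\bbR)}^2\ge c_d\,\rho^{-d}\nrm{g}_{L^2(\bbR)}^2$. By scaling it suffices to prove this for $\rho=1$, where it follows by compactness: a normalized near–minimizing sequence is bounded in $H^{\frac d2}(\bbR)$, hence by Rellich has an $L^2$–limit $g$ supported in a fixed compact set with $\nrm{g}_{L^2}=1$ and $\Lambda^{\frac d2}g=0$; but then $\wht g$ is supported at the origin, so $g$ is a polynomial, hence $g\equiv0$, a contradiction. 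Now $\omega^{(n)}_{loc}(t,\cdot)$ is supported in $\Phi(t,\Omega_n)$, so each slice $\omega^{(n)}_{loc}(t,x_h,\cdot)$ is supported in the interval $\{\,|x_d|\le\sup_{x\in\Omega_n}|\Phi^d(t,x)|\,\}$; applying the inequality slicewise and integrating over $x_h\in D_n(t)$ gives
\begin{equation*}
\int_{\bbR^d}|\Lambda_d^{\frac d2}\omega^{(n)}_{loc}(t)|^2\,\ud x\ageq\Big(\sup_{x\in\Omega_n}|\Phi^d(t,x)|\Big)^{-d}\nrm{\omega^{(n)}_{loc}(t)}_{L^2(\bbR^d)}^2.
\end{equation*}

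\emph{Step 3: the $L^2$–mass of a bubble, and conclusion.} Because the images $\Phi(t,\Omega_n)$ are disjoint, \eqref{eq:cauchy} restricts to $\omega^{(n)}_{loc}(t,\Phi(t,x))=(\Phi^r(t,x)/r)^{d-2}\,\omega^{(n)}_{0,loc}(x)$, so the volume–preserving change of variables $y=\Phi(t,x)$ gives
\begin{equation*}
\nrm{\omega^{(n)}_{loc}(t)}_{L^2(\bbR^d)}^2=\int_{\Omega_n}\Big(\tfrac{\Phi^r(t,x)}{r}\Big)^{2(d-2)}|\omega^{(n)}_{0,loc}(x)|^2\,\ud x\ge\Big(\inf_{x\in\Omega_n}\tfrac{\Phi^r(t,x)}{r}\Big)^{2(d-2)}\nrm{\omega^{(n)}_{0,loc}}_{L^2(\bbR^d)}^2,
\end{equation*}
and a direct computation from \eqref{eq:nonexist-bubble} (each sub–bubble fills $\{r\aeq8^{-n+1},\ |x_d|\aeq8^{-n}\}$, of $\bbR^d$–volume $\aeq8^{-n(d-2)}8^{-2n}=8^{-nd}$) gives $\nrm{\omega^{(n)}_{0,loc}}_{L^2(\bbR^d)}^2\aeq8^{-nd}$. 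Finally, on $\Omega_n$ one has $|x_d|\aeq8^{-n}$, hence $\big(\sup_{\Omega_n}|\Phi^d(t,\cdot)|\big)^{-d}8^{-nd}=\big(\inf_{x\in\Omega_n}\tfrac{8^{-n}}{|\Phi^d(t,x)|}\big)^d\ageq\big(\inf_{x\in\Omega_n}\tfrac{|x_d|}{|\Phi^d(t,x)|}\big)^d$; chaining Steps 1–3 and summing over $n$ yields \eqref{lb_est}. The only genuine obstacle is Step~2: establishing the sharp one–dimensional inequality and carefully justifying its slicewise application (measurability of the sections, Fubini, and that the exceptional set of $x_h$ is null), while tracking that $c_d$ depends only on $d$; the rest is a change of variables together with the ordering of bubbles from Lemma~\ref{lem_order}.
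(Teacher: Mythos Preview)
Your proof is correct and shares Steps 1 and 3 with the paper, but your Step 2 is genuinely different. The paper obtains the lower bound on $\nrm{\Lambda_d^{\frac d2}\omega^{(n)}_{loc}(t)}_{L^2}^2$ by first invoking the one–dimensional Gagliardo--Nirenberg inequality
\[
\nrm{\partial_d \omega^{(n)}_{loc}}_{L^2}\le C\,\nrm{\Lambda_d^{\frac d2}\omega^{(n)}_{loc}}_{L^2}^{\frac2d}\,\nrm{\omega^{(n)}_{loc}}_{L^2}^{\frac{d-2}{d}},
\]
which reduces the fractional estimate to an integer one, and then applying Hardy's inequality $\int|\partial_d f|^2\ge c\int|f|^2/x_d^2$; the weight $1/x_d^2$ is turned into $(\inf|x_d|/|\Phi^d|)^2$ via the change of variables, and Gagliardo--Nirenberg raises this to the power $d/2$. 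You bypass both steps with a single scale–invariant Poincar\'e inequality $\nrm{\Lambda^{\frac d2}g}_{L^2}^2\ge c_d\,\rho^{-d}\nrm{g}_{L^2}^2$ for compactly supported $g$, proved by a soft compactness argument. Your route is shorter and more conceptual, and it makes transparent that what is really being used is the uncertainty–principle heuristic ``small support forces large high–frequency mass''; the paper's route, on the other hand, uses only named inequalities with explicit constants and avoids any appeal to compactness. Both land on the same estimate after the identical change of variables in Step 3.
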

\begin{proof}
	The above observation yields
	\begin{equation*}
		\begin{split}
			\int_{\bbR^d} |\Lambda^{\frac d2} \omega(t)|^2 \,\mathrm{d}x \geq \int_{\bbR^d} |\Lambda_d^{\frac d2} \omega(t)|^2 \,\mathrm{d}x = \sum_{n=n_0}^{\infty} \int_{\bbR^d} |\Lambda_d^{\frac d2} \omega^{(n)}_{loc}(t)|^2 \,\mathrm{d}x.
		\end{split}
	\end{equation*}
	By the Gagliardo–Nirenberg interpolation inequality
	\begin{equation*}
		\begin{split}
			\| \partial_d \omega^{(n)}_{loc} \|_{L^2} \leq C \| \Lambda_d^{\frac d2} \omega^{(n)}_{loc} \|_{L^2}^{\frac 2d} \| \omega^{(n)}_{loc} \|_{L^2}^{\frac {d-2}d},
		\end{split}
	\end{equation*}
	we have
	\begin{equation*}
		\begin{split}
			\int_{\bbR^d} |\Lambda^{\frac d2} \omega(t)|^2 \,\mathrm{d}x \geq c \sum_{n=n_0}^{\infty} \left( \int_{\bbR^d} |\partial_d \omega^{(n)}_{loc}(t)|^2 \,\mathrm{d}x \right)^{\frac {d}{2}} \left( \int_{\bbR^d} |\omega^{(n)}_{loc}(t)|^2 \,\mathrm{d}x \right)^{- \frac {d}{2}+1}.
		\end{split}
	\end{equation*}
	We can directly estimate
	\begin{align*}
		\int_{\bbR^d} |\partial_d \omega^{(n)}_{loc}(t)|^2 \,\mathrm{d}x &\geq \int_{\bbR^d} \frac {|\omega^{(n)}_{loc}(t,x)|^2}{|x_d|^2} \,\mathrm{d}x  = \int_{\Omega_n} \frac {|\Phi^r(t,x)|^{2(d-2)}}{|\Phi^d(t,x)|^2} \frac {|\omega_0(x)|^2}{|x_h|^{2(d-2)}} \,\mathrm{d}x \\
		&\geq c \left( \inf_{x\in\Omega_n} \frac {|\Phi^r(t,x)|}{r} \right)^{2(d-2)} \left( \inf_{x\in\Omega_n} \frac {|x_d|}{|\Phi^d(t,x)|} \right)^2 8^{2n} \int_{\Omega_n} |\omega_0|^2 \,\mathrm{d}x
	\end{align*}
	and
	\begin{align*}
		\int_{\bbR^d} |\omega^{(n)}_{loc}(t)|^2 \,\mathrm{d}x 
		&= \int_{\Omega_n} |\Phi^r(t,x)|^{2(d-2)} \frac {|\omega_0(x)|^2}{|x_h|^{2(d-2)}} \,\mathrm{d}x  \leq \left( \sup_{x\in\Omega_n} \frac {|\Phi^r(t,x)|}{r} \right)^{2(d-2)} \int_{\Omega_n} |\omega_0|^2 \,\mathrm{d}x.
	\end{align*}
	Combining the above estimates with
	\begin{equation*}
		\begin{split}
			\inf_{x\in\Omega_n} \frac {|\Phi^r(t,x)|}{r} \simeq \sup_{x\in\Omega_n} \frac {|\Phi^r(t,x)|}{r},
		\end{split}
	\end{equation*}
	a direct consequence of \eqref{eq:claim}, we obtain
	\begin{equation*}
		\begin{split}
			\int_{\bbR^d} |\Lambda^{\frac d2} \omega(t)|^2 \,\mathrm{d}x \geq c \sum_{n=n_0}^{\infty} \left( \inf_{x\in\Omega_n} \frac {|\Phi^r(t,x)|}{r} \right)^{2(d-2)} \left( \inf_{x\in\Omega_n} \frac {|x_d|}{|\Phi^d(t,x)|} \right)^d 8^{dn} \int_{\Omega_n} |\omega_0|^2 \,\mathrm{d}x.
		\end{split}
	\end{equation*}
	Recalling the definition of $\omega_0$, we obtain \eqref{lb_est}. This completes the proof.
\end{proof}

To make $\dot{H}^{\frac d2}$ norm inflation, it is essential to control the position of each bubble in the direction of $x_d$. The following lemma provides a bound. 
\begin{lemma}\label{lem_Phid}
	Assume that $\omega$ be a solution to \eqref{eq:Euler-axisym-no-swirl} satisfying \eqref{eq:sol-bound} with initial data \eqref{eq:nonexist-data}. Then, there exists an absolute constant $C>0$ such that
	\begin{equation}\label{Phid_est}
		\begin{split}
			\log \frac {\Phi^d(t,x)}{x_d} \le -\frac 1{4|B_d|} \sum_{k<n} \int_0^{t} I_k(\tau)\,\ud \tau + CAt
		\end{split}
	\end{equation}
	for all $x \in \Omega_n$ with $n > n_0$ and $t \in [0,T]$.
\end{lemma}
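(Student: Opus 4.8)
The plan is to integrate the flow equation \eqref{eq:flow} for $\log\Phi^{d}$ along the trajectory of a point $x\in\Omega_{n}$, using the Key Lemma to write $u^{d}/\Phi^{d}$ in terms of the quantities $I_{k}$, and then to absorb the remainder $B_{2}$ into the slack provided by the factor $\tfrac14$ on the right-hand side of \eqref{Phid_est}. By the oddness of $\omega_{0}$ it suffices to take $x\in\Omega_{n}$ with $x_{d}>0$; by Lemma~\ref{lem:basic} and the discussion in \S\ref{sec_std}, the trajectory $\Phi(t,x)$ stays in the region $\Phi^{r}\ge\Phi^{d}>0$ where Lemma~\ref{key_lem} applies on $[0,T]$. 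Since $x\in\Omega_{n}$, the remark following Lemma~\ref{lem_inv} shows that $Q(\Phi(t,x))$ meets $\supp\omega(t,\cdot)$ only in $\bigcup_{k<n}\Phi(t,\Omega_{k})$, so the main term of \eqref{ud_est} at the point $\Phi(t,x)$ equals $-\tfrac{1}{|B_{d}|}\sum_{k<n}I_{k}(t)$. Abbreviating $J(t):=\sum_{k<n}\int_{0}^{t}I_{k}(\tau)\,\ud\tau$ — nondecreasing, since $y_{d}\omega(t,y)\ge0$ forces $I_{k}\ge0$ — and $L(t):=\log\bigl(\Phi^{r}(t,x)/\Phi^{d}(t,x)\bigr)$, I would record from \eqref{eq:flow}, \eqref{ud_est}, the bound on $B_{2}$ in Lemma~\ref{key_lem}, the Gagliardo--Nirenberg remark, and \eqref{eq:sol-bound} the estimate
\[
\Bigl|\tfrac{\ud}{\ud t}\log\Phi^{d}(t,x)+\tfrac{1}{|B_{d}|}\textstyle\sum_{k<n}I_{k}(t)\Bigr|\le CB_{2}(\Phi(t,x))\le CA\bigl(1+L(t)\bigr).
\]

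The crucial intermediate step is an a priori bound $L(t)\le C\bigl(1+J(t)\bigr)$ on $[0,T]$. Applying \eqref{ur_est} at $\Phi(t,x)$ with $B_{1}\le CA$ gives $\tfrac{\ud}{\ud t}\log\Phi^{r}(t,x)\le\tfrac{1}{(d-1)|B_{d}|}\sum_{k<n}I_{k}(t)+CA$; subtracting the lower bound for $\tfrac{\ud}{\ud t}\log\Phi^{d}$ from the display above yields a differential inequality of the form $L'(t)\le\kappa J'(t)+CA\,L(t)+CA$ with $\kappa=\tfrac{d}{(d-1)|B_{d}|}$. Since $L(0)=\log(r/x_{d})$ is bounded by an absolute constant (the ratio $r/x_{d}$ lies in a fixed compact subinterval of $(0,\infty)$ on $\Omega_{n}=\supp\omega^{(n)}_{0,loc}$), Grönwall's inequality together with $AT\le c_{0}$ gives $L(t)\le C\bigl(1+J(t)\bigr)$.

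Finally I would integrate $\tfrac{\ud}{\ud t}\log\Phi^{d}(t,x)\le-\tfrac{1}{|B_{d}|}J'(t)+CA\bigl(1+L(t)\bigr)$ over $[0,t]$, substitute the bound on $L$, and use that $J$ is nondecreasing, so $\int_{0}^{t}J(\tau)\,\ud\tau\le tJ(t)\le(c_{0}/A)J(t)$. This produces
\[
\log\frac{\Phi^{d}(t,x)}{x_{d}}\le-\frac{1}{|B_{d}|}J(t)+CAt+Cc_{0}\,J(t),
\]
and shrinking the absolute constant $c_{0}$ (which \S\ref{sec_std} leaves free to be fixed) so that $Cc_{0}\le\tfrac{3}{4|B_{d}|}$ turns the coefficient $-\tfrac{1}{|B_{d}|}+Cc_{0}$ of $J(t)$ into one $\le-\tfrac{1}{4|B_{d}|}$, which is precisely \eqref{Phid_est}.

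I expect the main obstacle to be controlling the remainder $B_{2}$, which is logarithmic in $\Phi^{r}/\Phi^{d}$: because $t$ may lie beyond the stability window $T_{n}$ of Lemma~\ref{lem_inv}, one cannot simply use $\Phi^{r}\simeq r$ and $\Phi^{d}\simeq x_{d}$, and instead must run the coupled Grönwall estimate for $L(t)$ above and verify that the resulting error $Cc_{0}J(t)$ genuinely fits inside the $\tfrac34$-slack of the target inequality — this is where the freedom in the absolute constant $c_{0}$ is spent.
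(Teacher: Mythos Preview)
Your proof is correct and uses the same ingredients as the paper's --- the Key Lemma applied at $\Phi(t,x)$, the well-ordering of bubbles from Lemma~\ref{lem_order} to replace the integral over $Q(\Phi(t,x))$ by $\sum_{k<n}I_k$, and the smallness $AT\le c_0$ to absorb error terms.

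The organizational difference lies in how the logarithmic remainder $B_2\le CA\bigl(1+\log(\Phi^r/\Phi^d)\bigr)$ is controlled. The paper runs a continuity (bootstrap) argument: it posits a first time $T^*$ at which the slightly stronger bound $\log(\Phi^d/x_d)\le -\tfrac{1}{2|B_d|}J+CAt$ is saturated, uses that hypothesis on $[T^*,t]$ to bound $-\log(\Phi^d/x_d)$ in the remainder, integrates, and shows the resulting estimate is $\le -\tfrac{1}{4|B_d|}J+CAt$, closing the bootstrap. You instead break the circularity up front by running a separate Gr\"onwall estimate on $L(t)=\log(\Phi^r/\Phi^d)$ to obtain $L(t)\le C(1+J(t))$ unconditionally, and only then integrate the $\Phi^d$ equation. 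Your route is a bit more transparent, since the quantity being estimated never appears on the right-hand side of its own bound; the paper's route is marginally shorter but requires keeping track of the bootstrap interval. Both arguments spend the freedom in $c_0$ in exactly the same place --- converting $CA\int_0^t J\,\ud\tau\le Cc_0\,J(t)$ (or its analogue) into the $\tfrac{3}{4|B_d|}$ slack between the coefficients $\tfrac{1}{|B_d|}$ and $\tfrac{1}{4|B_d|}$.
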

\begin{proof}
	Let $x \in \Omega_n$ with $n > n_0$ and $x_d>0$. From \eqref{ur_est}, we clearly have
	\begin{equation*}
		\begin{split}
			\left| \frac {\ud}{\ud t} \log \Phi^r(t,x) - \frac 1{(d-1)|B_d|} \sum_{k<n} I_k(t) \right| \le CA
		\end{split}
	\end{equation*}
	and
	\begin{equation*}
		\begin{split}
			\log \frac {\Phi^r(t,x)}{r} \le C \sum_{k<n} \int_0^{t} I_k(\tau)\,\ud \tau + CAt.
		\end{split}
	\end{equation*}
	On the other hand, we show from \eqref{ud_est} that
	\begin{equation*}
		\begin{split}
			\left| \frac {\ud}{\ud t} \log \Phi^d(t,x) + \frac 1{|B_d|} \sum_{k<n} I_k(t) \right| \le CA \left(1 + \log \frac {\Phi^r(t,x)}{\Phi^d(t,x)} \right)^{\frac {d-1}d},
		\end{split}
	\end{equation*}
	hence,
	\begin{equation}\label{dt_log_est}
		\begin{split}
			\frac {\ud}{\ud t} \log \Phi^d(t,x) \leq - \frac 1{|B_d|} \sum_{k<n} I_k(t) + CA \left( 1+\log \frac {\Phi^r(t,x)}{r} - \log \frac {\Phi^d(t,x)}{x^d} \right).
		\end{split}
	\end{equation}
	Suppose that $T^* \in (0,T]$ be the first time such that
	\begin{equation*}
		\begin{split}
			\log \frac {\Phi^d(t,x)}{x_d} \ge -\frac 1{2|B_d|} \sum_{k<n} \int_0^{t} I_k(\tau)\,\ud \tau + CAt.
		\end{split}
	\end{equation*}
	Note that we can ensure $T^* \neq 0$ by taking the constant $C$ in the above assumption large. Integrating \eqref{dt_log_est} over time and combining the above, we can have
	\begin{gather*}
		\log \frac {\Phi^d(t,x)}{x_d} = \log \frac {\Phi^d(T^*,x)}{x_d} - \frac 1{|B_d|} \sum_{k<n} \int_{T^*}^t I_k(\tau) \,\ud \tau + CA (t-T^*) \left( 1+\sum_{k<n} \int_0^{t} I_k(\tau)\,\ud \tau \right) \\
		=-\frac 1{2|B_d|} \sum_{k<n} \int_0^{T^*} I_k(\tau)\,\ud \tau + CAT^* - \frac 1{|B_d|} \sum_{k<n} \int_{T^*}^t I_k(\tau) \,\ud \tau + CA (t-T^*) \left( 1+\sum_{k<n} \int_0^{t} I_k(\tau)\,\ud \tau \right) \\
		\leq -\frac 1{2|B_d|} \sum_{k<n} \int_0^{t} I_k(\tau) \,\ud \tau + CA(t-T^*) \sum_{k<n} \int_0^{t} I_k(\tau)\,\ud \tau + CAt
	\end{gather*}
	until the assumption is satisfied. We impose the condition $CAT \leq Cc_0 \leq 1/4|B_d|$. Then, we can infer \eqref{Phid_est}. This completes the proof.
\end{proof}

\begin{proof}[Proof of Theorem~\ref{thm:nonexist}]
Now we are ready to prove Theorem~\ref{thm:nonexist}. For given $\varepsilon > 0$, we take $n_0 \in \bbN$ sufficiently large, so that by Proposition~\ref{prop_initial}, we have
\begin{equation*}
	\begin{split}
		\nrm{\omega_0}_{H^{\frac d2} \cap L^{\infty}(\bbR^d)} < c\varepsilon 
	\end{split}
\end{equation*} for any $c>0$. Then, {as we have remarked after Proposition \ref{prop_initial}, this implies that $\bfu_0 \in H^{\frac d2 + 1}$ with $\nrm{\bfu_0}_{H^{\frac d2 + 1}} < \varepsilon$ by choosing small $c>0$}. 
Let $\ell \geq n_0$ be given in \eqref{eq:c0} and $x \in \Omega_n$ with $\sqrt{n} > \ell$. Then, from \eqref{ur_est} we have
\begin{equation*}
	\begin{split}
		\log \frac {\Phi^r(T,x)}{r} \ge \frac 1{(d-1)|B_d|} \sum_{k<n} \int_0^{T} I_k(\tau)\,\ud \tau - CAT \ge \log n^{c_2} - CAT,
	\end{split}
\end{equation*}
and from Lemma~\ref{lem_Phid},
\begin{equation*}
	\begin{split}
		\log \frac {|x_d|}{|\Phi^d(T,x)|} \ge \frac 1{4|B_d|} \sum_{k<n} \int_0^{T} I_k(t)\,\ud t - CAT \ge \log n^{c_2} - CAT.
	\end{split}
\end{equation*}
Thus, we have
\begin{equation*}
	\begin{split}
		\inf_{x\in\Omega_n} \frac {|\Phi^r(T,x)|}{r} \geq \frac 12 n^{c_2}, \qquad \inf_{x\in\Omega_n} \frac {|x_d|}{|\Phi_d(T,x)|} \geq \frac 12 n^{c_2}.
	\end{split}
\end{equation*}
Applying them to \eqref{lb_est}, we obtain
\begin{equation*}
	\begin{split}
		\int_{\bbR^d} |\Lambda^{\frac d2} \omega(T)|^2 \,\mathrm{d}x \geq c_3 \sum_{n \geq \ell^2} n^{c_4 - 2\alpha}
	\end{split}
\end{equation*}
for some $c_3>0$ and $0<c_4<1/4$. Here, we take $\alpha > 1/2$ with $c_4 - 2\alpha > -1$. This clearly leads to a contradiction. Taking $A$ large and $T$ small enough with $AT \leq c_0$, we can deduce the nonexistence result. This completes the proof.\end{proof}

\section{Norm inflation for smooth data}\label{sec:norm-inflation}
In this section, we establish Theorem~\ref{thm:inf}. We take $\alpha$ as in \S \ref{non_exist} and let $\omega_{0}$ satisfy \eqref{eq:nonexist-data} with $m<\infty$. In this case with $d=3$, we have already shown that there exists a unique global in time solution $\omega$ to \eqref{eq:Euler-axisym-no-swirl} with \eqref{danchin_class}. We believe that we can similarly extend this result to the $d>3$ cases by the use of exterior derivative. We prove Theorem~\ref{thm:inf} by providing the following proposition.

\begin{proposition}[Quantitative norm inflation]
	We consider the $C^\infty$--smooth initial data $\omega_{0}$ given by \eqref{eq:nonexist-data} with $n_0 < m < \infty$ and $1/2<\alpha<(1+c_4)/2$. Then, there exists $M=M(n_0) >0$ such that for all $m > M$, the unique global in time solution $\omega$ with \eqref{danchin_class} satisfies
	\begin{equation*}
		\begin{split}
			\sup_{t \in [0,T(m)]} \int_{\bbR^d} |\Lambda^{\frac d2} \omega(t)|^2 \,\mathrm{d}x \geq \frac {c_3}2 \sum_{n=m/2}^{m} n^{c_4 - 2\alpha}, \quad \mbox{for} \quad T(m) := c_1(1-\alpha)(m/2)^{\frac 12(-1+\alpha)}.
		\end{split}
	\end{equation*} 
\end{proposition}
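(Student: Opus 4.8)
The plan is to argue by contradiction and re-run the proof of Theorem~\ref{thm:nonexist}, but localized to the finite block of bubbles with index $n$ between $m/2$ and $m$. First I set
\begin{equation*}
	A = A(m) := \Big( \tfrac{c_3}{2} \sum_{n=m/2}^{m} n^{c_4-2\alpha} \Big)^{1/2},
\end{equation*}
so that, since $-1 < c_4 - 2\alpha < 0$ under the hypothesis $1/2 < \alpha < (1+c_4)/2$, one has $A(m) \simeq m^{(1+c_4-2\alpha)/2}$; combined with $T(m) = c_1(1-\alpha)(m/2)^{\frac12(-1+\alpha)} \simeq m^{(\alpha-1)/2}$ this gives $A(m)T(m) \simeq m^{(c_4-\alpha)/2} \to 0$ as $m \to \infty$, because $\alpha > 1/2 > c_4$. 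Hence there exists $M = M(n_0)$, large enough that both $A(m)T(m) \le c_0$ and $\ell(T(m)) \ge n_0$ (the latter forcing $M \gtrsim n_0^2$), so that everything below holds for all $m>M$.

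Fix $m>M$ and suppose, towards a contradiction, that $\sup_{t\in[0,T(m)]}\int_{\bbR^d}|\Lambda^{d/2}\omega(t)|^2\,\ud x < A^2$. The Danchin solution $\omega$ is global and lies in $L^\infty_{loc}([0,\infty);L^1\cap L^\infty(\bbR^d)) \subset L^\infty_{loc}([0,\infty);L^\infty\cap L^2(\bbR^d))$, hence in the class \eqref{omg_class}; moreover the contradiction hypothesis forces $\nrm{\Lambda^{d/2}\omega(t)}_{L^2} < A$ on $[0,T(m)]$, so \eqref{eq:sol-bound} holds with $T=T(m)$. Consequently the entire apparatus of \S\ref{sec_std} applies with the constants $c_1,c_2$: the flow $\Phi$ is well defined and $\omega$ is given by \eqref{eq:cauchy}, Lemmas~\ref{lem:basic}, \ref{lem_order}, \ref{lem_inv} and the estimate \eqref{eq:c0} are in force, with $T_k = \min\{T(m), c_1(1-\alpha)k^{-1+\alpha}\}$ and $\ell = \ell(T(m)) \simeq \sqrt{m/2}$ (so that $\ell^2 \simeq m/2$).

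Next, for $\ell^2 \le n \le m$ and $x\in\Omega_n$ with $x_d>0$, I combine \eqref{ur_est} (whose remainder is $\le CA$ under \eqref{eq:sol-bound}), the nonnegativity $I_k \ge 0$ (from $y_d\omega(t,y)\ge 0$ on the support, propagated by \eqref{eq:cauchy}), the bound $\int_0^{T(m)} I_k \ge \int_0^{T_k} I_k$, and \eqref{eq:c0} to get
\begin{equation*}
	\log\frac{\Phi^r(T(m),x)}{r} \ge \frac{1}{(d-1)|B_d|}\sum_{k<n}\int_0^{T(m)}I_k(\tau)\,\ud\tau - CAT(m) \ge \log n^{c_2} - CAT(m) \ge \log\big(\tfrac12 n^{c_2}\big)
\end{equation*}
for $m$ large; and Lemma~\ref{lem_Phid} together with \eqref{eq:c0} likewise gives $\log\frac{x_d}{\Phi^d(T(m),x)} \ge \log(\tfrac12 n^{c_2})$. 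Hence $\inf_{x\in\Omega_n}\frac{|\Phi^r(T(m),x)|}{r}\ge\tfrac12 n^{c_2}$ and $\inf_{x\in\Omega_n}\frac{|x_d|}{|\Phi^d(T(m),x)|}\ge\tfrac12 n^{c_2}$ for every $\ell^2 \le n \le m$. Inserting these into \eqref{lb_est}, keeping only the terms with $\ell^2 \le n \le m$ and using $\sum_{n=\ell^2}^{m}n^{c_4-2\alpha} \ge \tfrac12\sum_{n=m/2}^{m}n^{c_4-2\alpha}$ for $m$ large (valid since $\ell^2 \le m/2 + O(\sqrt m)$ and $c_4-2\alpha>-1$), I obtain $\int_{\bbR^d}|\Lambda^{d/2}\omega(T(m))|^2\,\ud x \ge \tfrac{c_3}{2}\sum_{n=m/2}^{m}n^{c_4-2\alpha} = A^2$, contradicting the hypothesis. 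This proves the proposition; Theorem~\ref{thm:inf} then follows by choosing $n_0$ so large that $\nrm{\omega_0^{(m)}}_{H^{d/2}\cap L^\infty}$ (hence $\nrm{\bfu_0^{(m)}}_{H^{d/2+1}}$, via the singular-integral bound recorded after Proposition~\ref{prop_initial}) is $<\varepsilon$ uniformly in $m$ (truncation at level $m$ only decreasing these norms), and then taking $m>M(n_0)$ so large that $T(m)<\delta$ and $\tfrac{c_3}{2}\sum_{n=m/2}^{m}n^{c_4-2\alpha}>A^2$.

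The step I expect to be the main obstacle --- beyond the bookkeeping needed to confine the argument to the window $\ell^2 \le n \le m$ and to arrange the factor $\tfrac{c_3}{2}$ so the contradiction closes --- is that, unlike the $L^\infty$ statement of Proposition~\ref{prop:illposed} where only the upward vortex stretching of $\Phi^r/r$ matters, here one must \emph{simultaneously} control the downward squeezing of $\Phi^d$, since the $\dot H^{d/2}$ norm detects thinning in the $x_d$-direction. This two-sided control must be extracted from \eqref{ur_est}--\eqref{ud_est} despite the logarithmically growing remainder $B_2 \lesssim (1+\log(r/x_d))^{(d-1)/d}\nrm{\nb\omega}_{L^d}$ in \eqref{ud_est}; this is precisely what Lemma~\ref{lem_Phid} supplies, absorbing the $\log(\Phi^r/\Phi^d)$ feedback through a first-time argument, and its conclusion \eqref{Phid_est} is what keeps the block sum in \eqref{lb_est} large enough.
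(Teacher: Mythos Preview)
Your proof is correct and follows essentially the same route as the paper's: set up a contradiction via \eqref{eq:sol-bound}, invoke the machinery of \S\ref{sec_std} (Lemmas~\ref{lem:basic}--\ref{lem_inv}), use \eqref{eq:c0} together with \eqref{ur_est} and Lemma~\ref{lem_Phid} to get the lower bounds $\inf_{\Omega_n}\Phi^r/r,\ \inf_{\Omega_n}|x_d|/|\Phi^d| \ge \tfrac12 n^{c_2}$ on the block $m/2\le n\le m$, and plug into \eqref{lb_est}. Your bookkeeping is in fact slightly cleaner than the paper's: you take $A$ to be the square root of the threshold (so that the contradiction hypothesis $\|\Lambda^{d/2}\omega\|_{L^2}<A$ matches \eqref{eq:sol-bound} exactly and the final inequality $\ge A^2$ closes immediately), and you explicitly reconcile $\ell^2$ with $m/2$ via the $O(\sqrt m)$ discrepancy, whereas the paper simply identifies $T_{\sqrt{m/2}}=T$ and sums directly over $m/2\le n\le m$.
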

\begin{proof}
	We set
	\begin{equation*}
		\begin{split}
			A(m) := \frac {c_3}2 \sum_{n=m/2}^{m} n^{c_4 - 2\alpha}
		\end{split}
	\end{equation*}
	and see that
	\begin{equation*}
		\begin{split}
			AT \leq C m^{1+c_4-2\alpha} m^{\frac 12(-1+\alpha)} = C m^{\frac 12 + c_4 - \frac 32\alpha}.
		\end{split}
	\end{equation*}
	By $1/2 + c_4 - 3\alpha/2 < 0$, we can let $M>2n_0^2$ so that $AT \leq c_0$ for all $m>M$. Suppose that with
	\begin{equation*}
		\begin{split}
			\sup_{t \in [0,T]}\| \Lambda^{\frac d2} \omega(t) \|_{ L^2 (\bbR^d)} \leq A
		\end{split}
	\end{equation*}
	for some $m>M$. Then, we can show Lemma~\ref{lem:basic}, Lemma~\ref{lem_order}, Lemma~\ref{lem_inv}, and Lemma~\ref{lem_Phid} with the same constants, regardless of the choice of $m$. Using $T_{k} \leq T$ for all $k \geq \sqrt{m/2}$, we show
	\begin{equation*}
		\begin{split}
			\sum_{ k = \sqrt{n}}^{n-1} \int_{0}^{T_{k}} I_{k}(t) \,\ud t \ge \log n^{c_2}, \qquad m/2 \leq n \leq m
		\end{split}
	\end{equation*}
	as we obtained \eqref{eq:c0}. Since this implies
	\begin{equation*}
		\begin{split}
			\inf_{x\in\Omega_n} \frac {|\Phi^r(T,x)|}{r} \geq \frac 12 n^{c_2}, \qquad \inf_{x\in\Omega_n} \frac {|x_d|}{|\Phi_d(T,x)|} \geq \frac 12 n^{c_2}
		\end{split}
	\end{equation*}
	for all $m/2 \leq n \leq m$, we apply them to \eqref{lb_est} and obtain
	\begin{equation*}
		\begin{split}
			\int_{\bbR^d} |\Lambda^{\frac d2} \omega^{(m)}(T)|^2 \,\mathrm{d}x \geq c_3 \sum_{n=m/2}^{m} n^{c_4 - 2\alpha},
		\end{split}
	\end{equation*}
	which makes a contradiction. 
	This completes the proof.
\end{proof}

\section{Proof of the Key Lemma}\label{sec:key-Lemma}
We only treat the case $d=3$ because the others can be estimated similarly. We first prove \eqref{ur_est}, obtaining
\begin{equation}\label{ur_3_est}
	\begin{split}
		\left| \frac {u^r(x)}{r} - \frac 3{8\pi} \int_{Q(x)} \frac { |y_h| y_3 }{|y|^{5}} \omega(y) \,\mathrm{d}y \right| \le C \| \nabla \omega \|_{L^3(\bbR^3)}
	\end{split}
\end{equation}
and
\begin{equation}\label{ur_3_est_2}
	\begin{split}
		\left| \frac {u^r(x)}{r} - \frac 3{8\pi} \int_{Q(x)} \frac { |y_h| y_3 }{|y|^{5}} \omega(y) \,\mathrm{d}y \right| \le C \| \omega \|_{L^\infty(\bbR^3)}
	\end{split}
\end{equation}
respectively. We recall \eqref{eq:BS-r}
\begin{equation*}
	\begin{split}
		u^r(x) =  -\frac{1}{4\pi|x_h|}\int_{\bbR^3} \frac{ (x_3-y_3) (x_1y_1 + x_2y_2) }{|y_h||x-y|^3}  \omg(y) \, \ud y.
	\end{split}
\end{equation*}
We fix $x \in \bbR^3$ with $|x_h| \geq x_3 > 0$ and define the sets $R \subset \bbR^3$ and $S \subset \bbR^3$ by
\begin{align*}
	R(x) &:= \{ y \in \bbR^3 ; |y_h| \leq 4|x_h|,\, |y_3| \geq 4|x_h| \}, \qquad	S(x) := \{ y \in \bbR^3 ; |y_h| \leq 4|x_h|,\, |y_3| \leq 4|x_h| \}.
\end{align*}
Using $\bbR^3 = Q(x) \cup R(x) \cup S(x)$, we write
\begin{equation}\label{I123_est}
	\begin{split}
		u^r(x) = \frac 1{4\pi} (I_1 + I_2 + I_3),
	\end{split}
\end{equation}
where $I_1, I_2, I_3$ refer to the integral over $Q(x), R(x), S(x)$, respectively. Due to $|y|^2 \geq 8|x|^2$ for $y \in Q(x)$, we can perform the Taylor expansion and obtain
\begin{equation}\label{Taylor_exp}
	\begin{split}
		\frac {1}{|x-y|^3} = \frac 1{|y|^3} \left( 1-\frac {2(x \cdot y)}{|y|^2} + \frac {|x|^2}{|y|^2} \right)^{-\frac 32} = \frac 1{|y|^3} + \frac {3(x\cdot y)}{|y|^5} + O\left( \frac {|x|^2}{|y|^5} \right).
	\end{split}
\end{equation}
Thanks to the following estimate 
\begin{equation*}
	\begin{split}
		&\frac 1{|x_h|} \int_{Q(x)} \frac {|(x_3-y_3)(x_h \cdot y_h)|}{|y_h|} \frac {|x|^2}{|y|^5} |\omega(y)| \,\mathrm{d}y \leq C |x|^2 \int_{Q(x)} \frac {1}{|y|^3} \frac {|\omega(y)|}{|y|} \,\mathrm{d}y \leq C |x| \left\| \frac {\omega(y)}{|y_h|} \right\|_{L^3(Q(x))},
	\end{split}
\end{equation*} 
we can deduce
\begin{equation*}
	\begin{split}
		|I_1 - ( I_{11} + I_{12} + I_{13} )| \leq  C |x| \left\| \frac {\omega(y)}{|y_h|} \right\|_{L^3(\bbR^3)},
	\end{split}
\end{equation*}
where
\begin{align*}
	I_{11} &:= -\frac{1}{|x_h|}\int_{Q(x)} \frac{ (x_3-y_3) (x_h \cdot y_h) }{|y_h||y|^3}  \omg(y) \, \ud y, \\
	I_{12} &:= -\frac{3}{|x_h|}\int_{Q(x)} \frac{ x_3(x_h \cdot y_h)(x \cdot y)}{|y_h||y|^5} \omg(y) \, \ud y, \\
	I_{13} &:= \frac{3}{|x_h|}\int_{Q(x)} \frac{ y_3(x_h \cdot y_h) (x \cdot y)}{|y_h||y|^5} \omg(y) \, \ud y.
\end{align*}
To begin with, the  property 
\begin{equation}\label{cond_r_sym}
	\begin{split}
		\omega(y_h,y_3) = \omega(-y_h,y_3)
	\end{split}
\end{equation}
shows that $I_{11} = 0.$ Next, writing $I_{12}$ as
\begin{equation*}
	\begin{split}
		I_{12} = -\frac{3}{|x_h|}\int_{Q(x)} \frac{ x_3(x_h \cdot y_h)^2}{|y_h||y|^5} \omg(y) \, \ud y -\frac{3}{|x_h|}\int_{Q(x)} \frac{ x_3(x_h \cdot y_h)(x_3y_3)}{|y_h||y|^5} \omg(y) \, \ud y,
	\end{split}
\end{equation*}
we use \eqref{cond_d_sym} and \eqref{cond_r_sym} respectively to get $I_{12} = 0.$
Similarly, we separate $I_{13}$ into two parts
\begin{equation*}
	\begin{split}
		I_{13} = \frac{3}{|x_h|}\int_{Q(x)} \frac{ y_3(x_h \cdot y_h)^2}{|y_h||y|^5} \omg(y) \, \ud y +\frac{3}{|x_h|}\int_{Q(x)} \frac{ y_3(x_h \cdot y_h)(x_3y_3)}{|y_h||y|^5} \omg(y) \, \ud y.
	\end{split}
\end{equation*}
{We clearly have
\begin{equation*}
	\begin{split}
		\frac{3}{|x_h|}\int_{Q(x)} \frac{ y_3(x_h \cdot y_h)(x_3y_3)}{|y_h||y|^5} \omg(y) \, \ud y = 0.
	\end{split}
\end{equation*}
By $(x_h \cdot y_h)^2 = x_1^2y_1^2 + x_2^2y_2^2 + 2x_1y_1x_2y_2$ and the change of variables $(y_1,y_2,y_3) \mapsto (-y_2,y_1,y_3)$, it follows
\begin{equation*}
	\begin{split}
		I_{13} = \frac{3|x_h|}{2}\int_{Q(x)} \frac{|y_h|y_3}{|y|^5} \omg(y) \, \ud y.
	\end{split}
\end{equation*}
Thus, we have with Hardy's inequality that
}
\begin{equation*}
	\begin{split}
		\left| I_1 - \frac{3|x_h|}{2}\int_{Q(x)} \frac{|y_h|y_3}{|y|^5} \omg(y) \, \ud y \right| \leq C |x| \| \nabla \omega \|_{L^3(\bbR^3)}.
	\end{split}
\end{equation*}
For $y \in R(x)$, we can verify that
\begin{equation}\label{R_est}
	\begin{split}
		\frac 14 |y_3| \leq |x-y| \leq 4|y_3|.
	\end{split}
\end{equation}
 {
By H\"{o}lder's inequality and Hardy's inequality, we have
\begin{equation*}
	\begin{split}
		|I_2| \leq C \int_{R(x)} \frac {|\omega(y)|}{|y_3|^2} \,\mathrm{d}y \leq C \left\| \frac {\omega(y)}{|y_h|} \right\|_{L^3(R(x))} \left\| \frac {|y_h|}{|y_3|^2} \right\|_{L^{\frac 32}(R(x))} \leq C |x| \| \nabla \omega \|_{L^3(\bbR^3)}.
	\end{split}
\end{equation*}
}
Using integration by parts, we write $I_3 = I_{31} + I_{32} + I_{33},$
where
\begin{align*}
	I_{31} &:= -\frac{1}{|x_h|}\int_{\{ |y_h| \leq 4|x_h| \} \cap \{y_3 = 4|x_h|\}} \frac{ x_h \cdot y_h }{|y_h||x-y|} \omg(y) \,\mathrm{d}S, \\
	I_{32} &:= \frac{1}{|x_h|}\int_{\{ |y_h| \leq 4|x_h| \} \cap \{y_3 = -4|x_h|\}} \frac{ x_h \cdot y_h }{|y_h||x-y|} \omg(y) \,\mathrm{d}S, \\
	I_{33} &:= \frac{1}{|x_h|}\int_{S(x)} \frac{ x_h \cdot y_h }{|y_h||x-y|} \partial_3\omg(y) \, \ud y.
\end{align*}
From the observation
\begin{equation*}
	\begin{split}
		2|x_h| \leq |x-y| \leq 8|x_h|, \qquad y \in \{ |y_h| \leq 4|x_h| \} \cap \{y_3 = 4|x_h|\},
	\end{split}
\end{equation*}
we can estimate $I_{31}$ as
\begin{equation*}
	\begin{split}
		|I_{31}| \leq \frac {C}{|x_h|} \int_{\{ |y_h| \leq 4|x_h| \} \cap \{y_3 = 4|x_h|\}} |\omega(y)| \,\mathrm{d}S.
	\end{split}
\end{equation*}
We note that the fundamental theorem of calculus and H\"{o}lder's inequality yield
\begin{equation*}
	\begin{split}
		|\omega(y)| \leq \int_0^{4|x_h|} |\partial_3 \omega(y)| \,\mathrm{d}y_3 \leq C|x_h|^{\frac 23} \| \partial_3 \omega(y_h,\cdot) \|_{L^3(0,4|x_h|)}
	\end{split}
\end{equation*}
for almost all $y \in \{ |y_h| \leq 4|x_h| \} \cap \{y_3 = 4|x_h|\}$. Thus, using H\"{o}lder's inequality again, we obtain $|I_{31}| \leq C |x_h| \| \partial_3 \omega \|_{L^3(S(x))}$ and similarly, we can show that $|I_{32}| \leq C |x_h| \| \partial_3 \omega \|_{L^3(S(x))}$. Next, computing $I_{33}$ with H\"{o}lder's inequality
\begin{equation*}
	\begin{split}
		|I_{33}| \leq \int_{S(x)} \frac {|\partial_3 \omega(y)|}{|x-y|} \,\mathrm{d}y \leq C \| \partial_3 \omega \|_{L^3(S(x))} \left( \int_{S(x)} \frac 1{|x-y|^{\frac 32}} \,\mathrm{d}y \right)^{\frac 23} \leq C |x_h| \| \partial_3 \omega \|_{L^3(S(x))},
	\end{split}
\end{equation*}
we deduce that
\begin{equation*}
	\begin{split}
		|I_3| \leq C |x| \| \nabla \omega \|_{L^3(\bbR^3)}.
	\end{split}
\end{equation*}
Inserting the estimates for $I_1$, $I_2$, and $I_3$ into \eqref{I123_est}, we obtain \eqref{ur_3_est}. To show \eqref{ur_3_est_2}, we slightly modify the estimates. By \eqref{Taylor_exp} and the following estimate
 {
\begin{equation*}
	\begin{aligned}
		\frac 1{|x_h|} \int_{Q(x)} \frac {|x_3-y_3| |x_1y_1 + x_2y_2|}{|y_h|} \frac {|x|^2}{|y|^5} |\omega(y)| \,\mathrm{d}y &\leq C |x|^2 \int_{Q(x)} \frac {1}{|y|^4} |\omega(y)| \,\mathrm{d}y \leq C |x| \| \omega \|_{L^\infty(Q(x))},
	\end{aligned}
\end{equation*}
}
we obtain
\begin{equation*}
	\begin{split}
		|I_1 - ( I_{11} + I_{12} + I_{13} )| \leq C |x| \| \omega \|_{L^\infty(Q(x))}.
	\end{split}
\end{equation*}
Recalling the estimate for $I_{11}$, $I_{12}$, and $I_{13}$, we arrive at
\begin{equation*}
	\begin{split}
		\left| I_1 - \frac{3|x_h|}{2}\int_{Q(x)} \frac{|y_h|y_3}{|y|^5} \omg(y) \, \ud y \right| \leq C |x| \| \omega \|_{L^\infty(\bbR^3)}.
	\end{split}
\end{equation*}
 {
By \eqref{R_est} and H\"{o}lder's inequality, we obtain
\begin{equation*}
	\begin{split}
		|I_2| \leq C \int_{R(x)} \frac {|\omega(y)|}{|y_3|^2} \,\mathrm{d}y \leq C \| \omega \|_{L^\infty(R(x))} \int_{R(x)} \frac 1{|y_3|^2} \,\mathrm{d}y \leq C |x| \| \omega \|_{L^\infty(R(x))}.
	\end{split}
\end{equation*}
}
Using H\"{o}lder's inequality yields
\begin{equation*}
	\begin{split}
		|I_3| \leq \| \omega \|_{L^\infty(S(x))} \int_{S(x)} \frac {1}{|x-y|^2} \,\mathrm{d}y \le C \nrm{\omg}_{L^\infty(S(x))} |x_h|\leq C |x| \| \omega \|_{L^\infty(S(x))}.
	\end{split}
\end{equation*}
Thus, combining the above with \eqref{I123_est}, we obtain \eqref{ur_3_est_2}.

\medskip 

Now we aim to show that
\begin{equation}\label{ud_3_est}
	\begin{split}
		\left| \frac {u^3(x)}{x_3} + \frac 3{4\pi} \int_{Q(x)} \frac { |y_h| y_3 }{|y|^{5}} \omega(y) \,\mathrm{d}y \right| \le C \left(1 + \log \frac {r}{x_3} \right)^{\frac 23} \| \nabla \omega \|_{L^3(\bbR^3)}
	\end{split}
\end{equation}
and
\begin{equation}\label{ud_3_est_2}
	\begin{split}
		\left| \frac {u^3(x)}{x_3} + \frac 3{4\pi} \int_{Q(x)} \frac { |y_h| y_3 }{|y|^{5}} \omega(y) \,\mathrm{d}y \right| \le C \left(1 + \log \frac {r}{x_3} \right) \| \omega \|_{L^\infty(\bbR^3)}.
	\end{split}
\end{equation}
From \eqref{eq:BS-d} we have
\begin{equation*}
	\begin{split}
		u^3 (x) = \frac{1}{4\pi} \int_{\bbR^d} \frac{ (x_h-y_h) \cdot y_h }{|y_h||x-y|^3}  \omg(y) \, \ud y.
	\end{split}
\end{equation*}
Using \eqref{cond_d_sym} and the notation $\bar{y} := (y_h,-y_3)$, we write
\begin{equation*}
	\begin{split}
		u^3 (x) = \frac{1}{4\pi} \int_{\{y_3 \geq 0\}} \frac{ (x_h-y_h) \cdot y_h }{|y_h|} \left( \frac {1}{|x-y|^3} - \frac {1}{|x-\bar{y}|^3} \right) \omg(y) \, \ud y.
	\end{split}
\end{equation*}
Let
\begin{equation}\label{J123_est}
	\begin{split}
		u^3(x) = \frac 1{4\pi} (J_1 + J_2 + J_3),
	\end{split}
\end{equation}
where $J_1, J_2$, and $J_3$ refer to the above integral on $Q(x), R(x)$, and $S(x)$, respectively.
Using the formula
\begin{equation}\label{AB_est}
	\begin{split}
		\frac 1{A^3} - \frac 1{B^3} = \frac {(B^2 - A^2)(A^2 + AB + B^2)}{A^3B^3 (A+B)},
	\end{split}
\end{equation}
we have
\begin{equation*}
	\begin{split}
		J_1 = \int_{Q(x) \cap \{y_3 \geq 0\}} \frac {4x_3y_3((x_h - y_h) \cdot y_h)(|x-y|^2 + |x-y||x-\bar{y}| + |x-\bar{y}|^2)}{|y_h||x-y|^3|x-\bar{y}|^3(|x-y|+|x-\bar{y}|)} \omega(y) \,\mathrm{d}y.
	\end{split}
\end{equation*}
The goal is to prove that
\begin{equation}\label{eq:J1-goal}
	\begin{split}
		\left| J_1 - 6x_3 \int_{Q(x) \cap \{y_3 \geq 0 \}} \frac { |y_h| y_3 }{|y|^5} \omega(y) \,\mathrm{d}y \right| \leq C x_3 \| \nabla \omega \|_{L^3(\bbR^3)}.
	\end{split}
\end{equation}
We consider $J_1 = 4x_3(J_{11} + J_{12}),$
where
\begin{align*}
	J_{11} &:= \int_{Q(x) \cap \{y_3 \geq 0\}} \frac {y_3 (x_h \cdot y_h) (|x-y|^2 + |x-y||x-\bar{y}| + |x-\bar{y}|^2)}{|y_h||x-y|^3|x-\bar{y}|^3(|x-y|+|x-\bar{y}|)} \omega(y) \,\mathrm{d}y, \\
	J_{12} &:= -\int_{Q(x) \cap \{y_3 \geq 0\}} \frac {y_3 |y_h| (|x-y|^2 + |x-y||x-\bar{y}| + |x-\bar{y}|^2)}{|x-y|^3|x-\bar{y}|^3(|x-y|+|x-\bar{y}|)} \omega(y) \,\mathrm{d}y.
\end{align*}
For $y \in Q(x)$, we can show that
 {
\begin{equation}\label{eq:equiv-y}
	\begin{split}
		\frac12 |y| \le |x-y| \le 2|y|, \qquad \frac 12|y| \le |x-\bar{y}| \le 2|y|.
	\end{split}
\end{equation}
}
Thus, we have
\begin{equation*}
	\begin{split}
		|J_{11}| \le C|x| \int_{Q(x) \cap \{y_3 \geq 0\}} \frac {1}{|y|^3} \frac{|\omega(y)|}{|y|} \,\mathrm{d}y  \le C \left\| \frac {\omega(y)}{|y_h|} \right\|_{L^3(Q(x))}.
	\end{split}
\end{equation*}
Regarding $J_{12}$, we claim that 
\begin{equation}\label{J12_est}
	\begin{split}
		\left| J_{12} + \frac{3}{2} \int_{Q(x) \cap \{y_3 \geq 0\}} \frac { |y_h| y_3 }{|y|^5} \omega(y) \,\mathrm{d}y \right| \le C \left\| \frac {\omega(y)}{|y_h|} \right\|_{L^3(Q(x))}.
	\end{split}
\end{equation}
We write $J_{12}=J_{121}+J_{122}+J_{123}$ for
\begin{align*}
	J_{121} &:= -\int_{Q(x) \cap \{y_3 \geq 0\}} \frac {y_3 |y_h| |x-y|^2}{|x-y|^3|x-\bar{y}|^3(|x-y|+|x-\bar{y}|)} \omega(y) \,\mathrm{d}y,\\
	J_{122} &:= -\int_{Q(x) \cap \{y_3 \geq 0\}} \frac {y_3 |y_h| |x-y||x-\bar{y}|}{|x-y|^3|x-\bar{y}|^3(|x-y|+|x-\bar{y}|)} \omega(y) \,\mathrm{d}y,\\
	J_{123} &:= -\int_{Q(x) \cap \{y_3 \geq 0\}} \frac {y_3 |y_h| |x-\bar{y}|^2}{|x-y|^3|x-\bar{y}|^3(|x-y|+|x-\bar{y}|)} \omega(y) \,\mathrm{d}y
\end{align*}
and show that \begin{equation*}
	\begin{split}
		\left| J_{12k} + \frac{1}{2} \int_{Q(x) \cap \{y_3 \geq 0\}} \frac { |y_h| y_3 }{|y|^5} \omega(y) \,\mathrm{d}y \right| \le C \left\| \frac {\omega(y)}{|y_h|} \right\|_{L^3(Q(x))}, \qquad k=1,2,3.
	\end{split}
\end{equation*} We only prove the case $k = 1$, since the others can be treated similarly. We clearly have
\begin{gather*}
	J_{121} + \frac{1}{2} \int_{Q(x) \cap \{y_3 \geq 0\}} \frac { |y_h| y_3 }{|y|^5} \omega(y) \,\mathrm{d}y \\
	= - \int_{Q(x) \cap \{y_3 \geq 0\}} |y_h|y_3 \frac{2|y|^5 - |x-y||x-\bar{y}|^3(|x-y|+|x-\bar{y}|)}{2|y|^5|x-y||x-\tilde{y}|^3(|x-y|+|x-\tilde{y}|)}\omega(y) \,\mathrm{d}y.
\end{gather*}
We note
\begin{equation*}
	\begin{split}
		2|y|^5 - |x-y||x-\bar{y}|^3(|x-y|+|x-\bar{y}|) = |y|^5 - |x-y|^2|x-\bar{y}|^3 + |y|^5 - |x-y||x-\bar{y}|^4 
	\end{split}
\end{equation*}
and
\begin{equation*}
	\begin{split}
		|y|^5 - |x-y|^2|x-\bar{y}|^3 = |y|^3(|y|^2 - |x-y|^2) + |x-y|^2(|y|^3-|x-\bar{y}|^3),
	\end{split}
\end{equation*}
\begin{equation*}
	\begin{split}
		|y|^5 - |x-y||x-\bar{y}|^4 = |y|^4(|y| - |x-y|) + |x-y|(|y|^4-|x-\bar{y}|^4).
	\end{split}
\end{equation*} Applying
\begin{equation*}
	\begin{split}
		|y|-|x-y| = \frac{|y|^2 - |x-y|^2}{|y|+|x-y|}, \qquad |y|-|x-\bar{y}| = \frac{|y|^2 - |x-\bar{y}|^2}{|y|+|x-\bar{y}|}
	\end{split}
\end{equation*} to the above, we have
\begin{equation*}
	\begin{split}
		\left|2|y|^5 - |x-y||x-\bar{y}|^3(|x-y|+|x-\bar{y}|)\right| \le C|x||y|^4.
	\end{split}
\end{equation*} This infers \begin{equation*}
	\begin{split}
		\left|J_{121} + \frac{1}{2} \int_{Q(x) \cap \{y_3 \geq 0\}} \frac { |y_h| y_3 }{|y|^5} \omega(y) \,\mathrm{d}y \right| \le C|x| \int_{Q(x) \cap \{y_3 \geq 0\}} \frac {1}{|y|^3} \frac{|\omega(y)|}{|y|} \,\mathrm{d}y \le C \left\| \frac {\omega(y)}{|y_h|} \right\|_{L^3(Q(x))}.
	\end{split}
\end{equation*} 
With Hardy's inequality, we deduce \eqref{J12_est}. Thus, we obtain \eqref{eq:J1-goal}. As estimating $J_1$, we show
\begin{equation*}
	\begin{split}
		J_2 = \int_{R(x) \cap \{y_3 \geq 0\}} \frac {4x_3y_3((x_h - y_h) \cdot y_h)(|x-y|^2 + |x-y||x-\bar{y}| + |x-\bar{y}|^2)}{|y_h||x-y|^3|x-\bar{y}|^3(|x-y|+|x-\bar{y}|)} \omega(y) \,\mathrm{d}y.
	\end{split}
\end{equation*}
We can recall \eqref{R_est} and show
\begin{equation*}
	\begin{split}
		\left| \frac {4x_3y_3((x_h - y_h) \cdot y_h)(|x-y|^2 + |x-y||x-\bar{y}| + |x-\bar{y}|^2)}{|y_h||x-y|^3|x-\bar{y}|^3(|x-y|+|x-\bar{y}|)} \right| \leq C \frac{x_3}{|y_3|^3}.
	\end{split}
\end{equation*} 
 {
Hence, we have with H\"{o}lder's inequality and Hardy's inequality that
\begin{equation*}
	\begin{split}
		|J_2| \leq Cx_3 \int_{R(x) \cap \{y_3 \geq 0\}} \frac {|\omega(y)|}{|y_3|^3} \,\mathrm{d}y \leq Cx_3 \left\| \frac {\omega(y)}{|y_h|} \right\|_{L^3(R(x))} \left\| \frac {|y_h|}{|y_3|^{3}} \right\|_{L^{\frac 32}(R(x))} \leq Cx_3 \| \nabla \omega \|_{L^3(\bbR^3)}.
	\end{split}
\end{equation*} 
}
To estimate $J_3$, we use the formula
\begin{equation*}
	\begin{split}
		(x_h-y_h) \left( \frac {1}{|x-y|^3} - \frac {1}{|x-\bar{y}|^3} \right) = \nabla_h \left( \frac {1}{|x-y|} - \frac {1}{|x-\bar{y}|} \right)
	\end{split}
\end{equation*} 
and have
\begin{equation*}
	\begin{split}
		J_3 = \int_{S(x) \cap \{y_3 \geq 0\}} \nabla_h \left( \frac {1}{|x-y|} - \frac {1}{|x-\bar{y}|} \right) \cdot \frac{y_h }{|y_h|} \omg(y) \, \ud y.
	\end{split}
\end{equation*} 
With an integration by parts, we can write  $J_3= J_{31} + J_{32},$
where
\begin{align*}
	J_{31} &:= \int_{\{|y_h| = 4|x_h|\} \cap \{0 \leq y_3 \leq 4|x_h|\}} \frac {4x_3y_3}{|x-y||x-\bar{y}|(|x-y|+|x-\bar{y}|)} \omg(y) \,\mathrm{d}S, \\
	J_{32} &:= - \int_{S(x) \cap \{y_3 \geq 0\}} \frac {4x_3y_3}{|x-y||x-\bar{y}|(|x-y|+|x-\bar{y}|)} \nabla_h \cdot \left( \frac{y_h }{|y_h|} \omg(y) \right) \mathrm{d}y.
\end{align*}
From
\begin{equation*}
	\begin{split}
		|x_h| \leq |x-y| \leq |x-\bar{y}|, \qquad y \in \{|y_h| = 4|x_h|\} \cap \{0 \leq y_3 \leq 4|x_h|\},
	\end{split}
\end{equation*} 
we can have
\begin{equation*}
	\begin{split}
		|J_{31}| \leq C \frac {x_3}{|x_h|} \int_0 ^{4|x_h|} |\omega(y)| \,\mathrm{d}y_3, \qquad |y_h|=4|x_h|.
	\end{split}
\end{equation*} 
The fundamental theorem of calculus and H\"{o}lder's inequality imply
\begin{equation*}
	\begin{split}
		|\omega(y)| \leq \int_0^{4|x_h|} |\partial_r \omega(y)| \,\mathrm{d}r \leq C|x_h|^{\frac 13} \| \nabla \omega(\cdot,y_3) \|_{L^3(\{y_h \in \bbR^2;|y_h| \leq 4|x_h|\})}
	\end{split}
\end{equation*} 
for almost all $y \in \{|y_h| = 4|x_h|\} \cap \{0 \leq y_3 \leq 4|x_h|\}$. Thus, it follows
\begin{equation*}
	\begin{split}
		|J_{31}| \leq C x_3 \| \nabla \omega \|_{L^3(S(x))}.
	\end{split}
\end{equation*} 
Using H\"{o}lder's inequality and Hardy's inequality, we have
\begin{equation*}
	\begin{split}
		| J_{32} | \leq Cx_3 \| \nabla \omg \|_{L^3(\bbR^3)} \left( \int_{S(x) \cap \{y_3 \geq 0 \}} \left| \frac {y_3}{|x-y||x-\bar{y}|^2} \right|^{\frac 32} \,\mathrm{d}y \right)^{\frac 23}.
	\end{split}
\end{equation*} 
Here, we note that
\begin{equation}\label{log_est}
	\begin{split}
		\int_{S(x) \cap \{y_3 \geq 0 \}} \left| \frac {y_3}{|x-y||x-\bar{y}|^2} \right|^{\frac 32} \,\mathrm{d}y \leq \left(1 + \log \frac {|x_h|}{x_3} \right),
	\end{split}
\end{equation}
which will be shown at the end of the proof. Hence, combining the above estimates, we obtain
\begin{equation*}
	\begin{split}
		| J_{32} | \leq Cx_3 \| \nabla \omega \|_{L^3(\bbR^3)} \left(1 + \log \frac {|x_h|}{x_3} \right)^{\frac 23}
	\end{split}
\end{equation*} 
and then 
\begin{equation*}
	\begin{split}
		| J_{3} | \leq Cx_3 \| \nabla \omega \|_{L^3(\bbR^3)} \left(1 + \log \frac {|x_h|}{x_3} \right)^{\frac 23}.
	\end{split}
\end{equation*} 
Inserting the estimates for $J_1$, $J_2$, and $J_3$ into \eqref{J123_est}, we obtain \eqref{ud_3_est}. Now, we show \eqref{ud_3_est_2}. We can have from \eqref{eq:equiv-y} that
\begin{equation*}
	\begin{split}
		|J_{11}| \le C|x| \int_{Q(x) \cap \{y_3 \geq 0\}} \frac {1}{|y|^4} |\omega(y)| \,\mathrm{d}y \leq C \| \omega \|_{L^\infty(Q(x))}.
	\end{split}
\end{equation*}
In the above, we estimated for $J_{12} = J_{121} + J_{122} + J_{123}$ that
\begin{equation*}
	\begin{split}
		\left|J_{12k} + \frac{1}{2} \int_{Q(x) \cap \{y_3 \geq 0\}} \frac { |y_h| y_3 }{|y|^5} \omega(y) \,\mathrm{d}y \right| \le C|x| \int_{Q(x) \cap \{y_3 \geq 0\}} \frac {1}{|y|^4} |\omega(y)| \,\mathrm{d}y, \qquad k=1,2,3.
	\end{split}
\end{equation*} 
Since we have already showed that
\begin{equation*}
	\begin{split}
		\int_{Q(x)} \frac {1}{|y|^4} |\omega(y)| \,\mathrm{d}y \leq C |x|^{-1} \| \omega \|_{L^{\infty}(Q(x))},
	\end{split}
\end{equation*}
we can deduce
\begin{equation*}
	\begin{split}
		\left| J_{12} + \frac{3}{2} \int_{Q(x) \cap \{y_3 \geq 0\}} \frac { |y_h| y_3 }{|y|^5} \omega(y) \,\mathrm{d}y \right| \leq C \| \omega \|_{L^\infty(Q(x))}
	\end{split}
\end{equation*}
and
\begin{equation*}
	\begin{split}
		\left| J_{1} + 6x_3 \int_{Q(x) \cap \{y_3 \geq 0\}} \frac { |y_h| y_3 }{|y|^5} \omega(y) \,\mathrm{d}y \right| \leq Cx_3 \| \omega \|_{L^\infty(\bbR^3)}.
	\end{split}
\end{equation*}
Recalling \eqref{R_est}, we have
\begin{equation*}
	\begin{split}
		|J_2| \leq Cx_3 \int_{R(x)} \frac {|\omega(y)|}{|y_3|^3} \,\mathrm{d}y \leq Cx_3 \| \omega \|_{L^\infty(R(x))} \int_{R(x)} \frac {1}{|y_3|^{3}} \,\mathrm{d}y \leq Cx_3 \| \omega \|_{L^\infty(\bbR^3)}.
	\end{split}
\end{equation*}
Using \eqref{AB_est} gives
\begin{equation*}
	\begin{split}
		J_3 = \int_{S(x) \cap \{y_3 \geq 0\}} \frac {4x_3y_3((x_h - y_h) \cdot y_h)(|x-y|^2 + |x-y||x-\bar{y}| + |x-\bar{y}|^2)}{|y_h||x-y|^3|x-\bar{y}|^3(|x-y|+|x-\bar{y}|)} \omega(y) \,\mathrm{d}y.
	\end{split}
\end{equation*}
By H\"{o}lder's inequality, it follows
\begin{equation*}
	\begin{split}
		|J_3| \leq C x_3 \| \omega \|_{L^\infty(S(x))} \int_{S(x) \cap \{y_3 \geq 0\}} \frac {y_3}{|x-y|^2|x-\bar{y}|^2} \,\mathrm{d}y.
	\end{split}
\end{equation*}
Here, we claim that
\begin{equation*}
	\begin{split}
		\int_{S(x) \cap \{y_3 \geq 0 \}} \frac {y_3}{|x-y|^{2}|x-\bar{y}|^{2}} \,\mathrm{d}y \leq C \left(1 + \log \frac {|x_h|}{x_3} \right),
	\end{split}
\end{equation*} 
which actually implies \eqref{log_est}. We consider the integral on the region $S(x) \cap \{0 \leq y_3 \leq 2x_3 \}$ first. Since we can estimate
\begin{align*}
	\int_{S(x) \cap \{0 \leq y_3 \leq 2x_3 \}} \frac {y_3}{|x-y|^{2}|x-\bar{y}|^{2}} \,\mathrm{d}y &\leq \frac 1{x_3} \int_{S(x) \cap \{0 \leq y_3 \leq 2x_3 \}} \frac {1}{|x-y|^{2}} \,\mathrm{d}y \\
	&\leq \frac 2{x_3} \int_0^{x_3} \int_0^{8|x_h|} \frac {r}{r^{2} + y_3^2} \,\mathrm{d}r \mathrm{d}y_3
\end{align*}
and
\begin{equation*}
	\begin{split}
		\int_0^{8|x_h|} \frac {r}{r^{2} + y_3^2} \,\mathrm{d}r = \frac 12 \log \frac {8|x_h|^2+y_3^2}{y_3^2}  \leq \log (C|x_h|) - \log y_3,
	\end{split}
\end{equation*} 
it follows
\begin{equation*}
	\begin{split}
		\frac 2{x_3} \int_0^{x_3} \int_0^{8|x_h|} \frac {r}{r^{2} + y_3^2} \,\mathrm{d}r \mathrm{d}y_3 \leq C \log(C|x_h|) - \frac C{x_3} \int_0^{x_3} \log y_3\, \mathrm{d}y_3 \leq C \left(1 + \log \frac {|x_h|}{x_3} \right).
	\end{split}
\end{equation*} 
Meanwhile, on the region $S(x) \cap \{y_3 \geq 2x_3 \}$, we can see
\begin{equation*}
	\begin{split}
		\int_{S(x) \cap \{y_3 \geq 2x_3 \}} \frac {y_3}{|x-y|^{2}|x-\bar{y}|^{2}} \,\mathrm{d}y \leq C \int_0^{8|x_h|} \int_{x_3}^{\infty} \frac {ry_3}{(r^{2} + y_3^2)^{2}} \,\mathrm{d}y_3\mathrm{d}r.
	\end{split}
\end{equation*} 
 {
Since
\begin{equation*}
	\begin{split}
		\int_0^{8|x_h|} \int_{x_3}^{\infty} \frac {ry_3}{(r^{2} + y_3^2)^{2}} \,\mathrm{d}y_3\mathrm{d}r \leq C \int_0^{8|x_h|} \frac r{r^2+x_3^2} \,\mathrm{d}r \leq C \log \frac {C|x_h|^2 + x_3^2}{x_3^2},
	\end{split}
\end{equation*} 
the claim follows. 
}
Using this estimate, we have
\begin{equation*}
	\begin{split}
		|J_3| \leq C x_3 \| \omega \|_{L^\infty(\bbR^3)} \left(1 + \log \frac {|x_h|}{x_3} \right).
	\end{split}
\end{equation*}
Collecting the estimates for $J_1$, $J_2$ and $J_3$, we obtain \eqref{ud_3_est_2}. This completes the proof.

\subsection*{Acknowledgments}
I.-J. Jeong has been supported by the Samsung Science and Technology Foundation under Project Number SSTF-BA2002-04 and the New Faculty Startup Fund from Seoul National University. J. Kim was supported by a KIAS Individual Grant (MG086501) at Korea Institute for Advanced Study. The authors thank the anonymous referee for a careful reading of the manuscript and giving helpful comments. 

\subsection*{Conflict of interest}

The authors state that there is no conflict of interest.


\bibliographystyle{amsplain}

\begin{thebibliography}{10}
	
	\bibitem{com1}
	Xinliang An, Haoyang Chen, and Silu Yin, \emph{The Cauchy problems for the 2D compressible Euler equations and ideal MHD system are ill-posed in $H^{\frac74}(\bbR^2)$}, arXiv:2206.14003.
	
		\bibitem{com2}
	\bysame, \emph{Low regularity ill-posedness and shock formation for 3D ideal compressible MHD}, arXiv:2110.10647.
	
	\bibitem{Bed}
	Jonas Azzam and Jacob Bedrossian, \emph{Bounded mean oscillation and the
		uniqueness of active scalar equations}, Trans. Amer. Math. Soc. \textbf{367}
	(2015), no.~5, 3095--3118. \MR{3314802}
	
	\bibitem{BL1}
	Jean Bourgain and Dong Li, \emph{Strong ill-posedness of the incompressible
		{E}uler equation in borderline {S}obolev spaces}, Invent. Math. \textbf{201}
	(2015), no.~1, 97--157. \MR{3359050}
	
	\bibitem{BL3D}
	\bysame, \emph{Strong ill-posedness of the 3{D} incompressible {E}uler equation
		in borderline spaces}, Int. Math. Res. Not. IMRN (2021), no.~16,
	12155--12264. \MR{4300224}
	
	\bibitem{ChenHou}
	Jiajie Chen and Thomas~Y. Hou, \emph{Finite time blowup of 2{D} {B}oussinesq
		and 3{D} {E}uler equations with {$C^{1,\alpha}$} velocity and boundary},
	Comm. Math. Phys. \textbf{383} (2021), no.~3, 1559--1667. \MR{4244260}
	
	\bibitem{Co}
	D.~Cordoba and L.~Martinez-Zoroa, \emph{Non existence and strong ill-posedness
		in and {S}obolev spaces for {S}{Q}{G}}, arXiv:2107.07463.
	
	\bibitem{Danchin}
	R.~Danchin, \emph{Axisymmetric incompressible flows with bounded vorticity},
	Uspekhi Mat. Nauk \textbf{62} (2007), 73--94.
	
	\bibitem{Elgindi-3D}
	Tarek~M. Elgindi, \emph{Finite-time singularity formation for ${C}^{1,\alpha}$
		solutions to the incompressible {E}uler equations on $\mathbb{R}^3$}, Ann. of Math. (2) \textbf{194} (2021), no. 3, 647--727.
	
	\bibitem{EJ}
	Tarek~M. Elgindi and In-Jee Jeong, \emph{Ill-posedness for the {I}ncompressible
		{E}uler {E}quations in {C}ritical {S}obolev {S}paces}, Ann. PDE \textbf{3}
	(2017), no.~1, 3:7. \MR{3625192}
	
	\bibitem{Jeong}
	In-Jee Jeong, \emph{Loss of regularity for the 2{D} {E}uler equations}, J.
	Math. Fluid Mech. \textbf{23} (2021), no.~4, Paper No. 95, 11. \MR{4309824}
	
	\bibitem{JKim}
	In-Jee Jeong and Junha Kim, \emph{Strong illposedness for {S}{Q}{G} in critical
		{S}obolev spaces}, arXiv:2107.07739.
	
	\bibitem{JY2}
	In-Jee Jeong and Tsuyoshi Yoneda, \emph{Enstrophy dissipation and vortex
		thinning for the incompressible 2{D} {N}avier-{S}tokes equations},
	Nonlinearity \textbf{34} (2021), no.~4, 1837--1853. \MR{4246445}
	
	\bibitem{JY}
	\bysame, \emph{Vortex stretching and enhanced dissipation for the
		incompressible 3{D} {N}avier-{S}tokes equations}, Math. Ann. \textbf{380}
	(2021), no.~3-4, 2041--2072. \MR{4297205}
	
	\bibitem{Ka}
	Tosio Kato, \emph{Remarks on the {E}uler and {N}avier-{S}tokes equations in
		{${\bf R}^2$}}, Nonlinear functional analysis and its applications, {P}art 2
	({B}erkeley, {C}alif., 1983), Proc. Sympos. Pure Math., vol.~45, Amer. Math.
	Soc., Providence, RI, 1986, pp.~1--7. \MR{843590}
	
	\bibitem{KL}
	Tosio Kato and Chi~Yuen Lai, \emph{Nonlinear evolution equations and the
		{E}uler flow}, J. Funct. Anal. \textbf{56} (1984), no.~1, 15--28. \MR{735703}
	
	\bibitem{KRYZ}
	Alexander Kiselev, Lenya Ryzhik, Yao Yao, and Andrej Zlato{\v{s}}, \emph{Finite
		time singularity for the modified {SQG} patch equation}, Ann. of Math. (2)
	\textbf{184} (2016), no.~3, 909--948. \MR{3549626}
	
	\bibitem{KS}
	Alexander Kiselev and Vladimir {\v{S}}ver{\'a}k, \emph{Small scale creation for
		solutions of the incompressible two-dimensional {E}uler equation}, Ann. of
	Math. (2) \textbf{180} (2014), no.~3, 1205--1220. \MR{3245016}
	
	\bibitem{Kwon}
	Hyunju Kwon, \emph{Strong ill-posedness of logarithmically regularized 2{D}
		{E}uler equations in the borderline {S}obolev space}, J. Funct. Anal.
	\textbf{280} (2021), no.~7, 108822. \MR{4211020}
	
	\bibitem{Yudovich1963}
	V.~I. Yudovich, \emph{Non-stationary flows of an ideal incompressible fluid},
	Z. Vycisl. Mat. i Mat. Fiz. \textbf{3} (1963), 1032--1066. \MR{0158189}
	
	\bibitem{Z}
	Andrej Zlato{\v{s}}, \emph{Exponential growth of the vorticity gradient for the
		{E}uler equation on the torus}, Adv. Math. \textbf{268} (2015), 396--403.
	\MR{3276599}
	
\end{thebibliography}

\end{document}